\newtheorem{theorem}{Theorem}[section]
\newtheorem{lemma}[theorem]{Lemma}
\newtheorem{proposition}[theorem]{Proposition}
\newtheorem{corollary}[theorem]{Corollary}
\theoremstyle{remark}
\newtheorem{remark}[theorem]{\it \bf{Remark}\/}
\newenvironment{acknowledgement}{\noindent{\bf Acknowledgement.~}}{}
\numberwithin{equation}{section}
\def\section{\@startsection{section}{1}%
	\z@{1.5\linespacing\@plus\linespacing}{.5\linespacing}%
	{\normalfont\bfseries\large\centering}}
\newcommand{\be}{\begin{equation}}
	\newcommand{\ee}{\end{equation}}
\newcommand{\bea}{\begin{eqnarray}}
	\newcommand{\eea}{\end{eqnarray}}
\newcommand{\bee}{\begin{eqnarray*}}
	\newcommand{\eee}{\end{eqnarray*}}
\def\pa{\partial}
\def\na{\nabla}
\def\CC{\mathbb{C}}
\def\NN{\mathbb{N}}
\def\RR{\mathbb{R}}
\def\ZZ{\mathbb{Z}}
\def\eps{\vare}
\def\ep{\varepsilon}
\def\calB{{\mathcal B}}
\def\calA{{\mathcal A}}
\def\calD{{\mathcal D}}
\def\calE{{\mathcal E}}
\def\calF{{\mathcal F}}
\def\calH{{\mathcal H}}
\def\calI{{\mathcal I}}
\def\calL{{\mathcal L}}
\def\calT{{\mathcal T}}
\def\calY{{\mathcal Y}}
\def\calT{{\mathcal T}}
\def\supess{\mathop{\operator@font Sup\,ess}}
\def\CC{\mathbb{C}}
\def\NN{\mathbb{N}}
\def\RR{\mathbb{R}}
\def\CC{\mathbb{C}}
\def\ZZ{\mathbb{Z}}
\def\a{\alpha}
\def\e{\varepsilon}
\def\bar#1{{\overline #1}}
\def\R2+{\RR ^2_+}
\def\pa{\partial}
\def\na{\nabla}
\def\lim{\mathop{\rm lim}}
\def\sup{\mathop{\rm sup}}
\def\l{\lambda}
\def\log{{\rm log}}
\def\pa{\partial}
\def\pa{\partial}
\def\la{\langle}
\def\ra{\rangle}
\def\Dein{\Delta^{-1}}
\def\delg{\delta_g}
\def\eps{\varepsilon_s}
\def\epu{\varepsilon_u}
\def\dg{\delta_g}
\begin{document}
	
	\title[]{Nonradial Stability of self-similar blowup to Keller-Segel equation in three dimensions}

	\author[Z. Li]{Zexing Li}
	\address{Laboratoire AGM \\ CY Cergy Paris Universit\'e \\ 2 avenue Adolphe Chauvin \\ 95300 Pontoise \\ France}
	\email{zexing.li@u-cergy.fr}

 \author[T. Zhou]{Tao Zhou}
 \address{Department of Mathematics\\
National University of Singapore\\
Block S17\\
10 Lower Kent Ridge Road\\
Singapore\\
119076\\
Singapore}
\email{zhoutao@u.nus.edu}

	\maketitle

\begin{abstract}

In three dimensions, the parabolic-elliptic Keller-Segel system exhibits a rich variety of singularity formations. Notably, it admits an explicit self-similar blow-up solution whose radial stability, conjectured more than two decades ago in \cite{Brenner_Constantin_Leo_Schenkel_Venkataramani_steady_state_99}, was recently confirmed by Glogi\'c and Sch\"orkhuber \cite{MR4685953}. This paper aims to extend the radial stability to the nonradial setting, building on the finite-codimensional stability analysis in our previous work \cite{ksnsblowup}. The main input is the mode stability of the linearized operator, whose nonlocal nature presents essential challenges for the spectral analysis. Besides a quantitative perturbative analysis for the high spherical classes, we adapt in the first spherical class the wave operator method of Li-Wei-Zhang \cite{oseenli} for the fluid stability to localize the operator and remove the known unstable mode simultaneously. Our method provides localization beyond the partial mass variable and is independent of the explicit formula of the profile, so it potentially sheds light on other linear nonlocal problems. 
 \end{abstract}

 \section{Introduction}

We consider the coupled parabolic-elliptic Keller-Segel system
\be
\begin{cases}
\partial_t \rho  = \Delta \rho - \na \cdot (\rho \na c), \\
-\Delta c = \rho.
\end{cases}\tag{KS}
\label{equation, Keller-Segel}
\ee
This serves as a fundamental model for chemotaxis, where $\rho$ represents the cell density and $c$ denotes the concentration of the self-emitted chemical substance. For more background on chemotaxis and related models, interested readers can refer to \cite{MR3932458, MR2013508, MR2073515}.

The system has conservation of total mass $M(t) = \int \rho(t,x) dx$, and is invariant under spatial translation and the following scaling transformation:
if $(\rho,c)$ is the solution to \eqref{equation, Keller-Segel}, so is
\be
(\rho_\lambda,c_\lambda) = \left( \frac{1}{\lambda^2} \rho\left(\frac{t}{\lambda^2}, \frac{x}{\lambda} \right),  c\left( \frac{t}{\lambda^2}, \frac{x}{\lambda} \right) \right), \quad \l > 0.
\label{scaling invariant}
\ee

In this paper, we focus on the singularity formation mechanisms for the Keller-Segel system \eqref{equation, Keller-Segel} in three dimensions.

\mbox{}

\subsection{Background.}
\label{subsection111}

\subsubsection{Chemotaxis: global existence v.s. finite-time blowup.}

\mbox{}

 \vspace{0.1cm}

In two dimensions, the total mass is invariant under scaling symmetry, and the critical mass $M= 8\pi$ is a threshold between global existence and finite-time blowup. Specifically, if $M> 8 \pi$ and the initial data $\rho_0 \in L_+^1((1+|x|^2),dx)$, then the solution blows up in finite time. Conversely, if $M<8 \pi$, then the solution exists globally. For further details, see \cite{Blanchet_Dolbeault_Perthame_globalexistence06, Dolbeault_Perthame_globalexistence04}.

In three dimensions, in contrast, there is no critical mass: there exist radial blowup solutions with arbitrarily small mass \cite{MR1361006}. For general solutions that are not necessarily radial, \cite{MR2099126} provides sufficient conditions that determine blowup or global existence: blowup can occur when the initial data has a relatively small second moment, while global existence can be ensured by the smallness of $L^\frac{3}{2}$ norm of the initial data. Interested readers can refer to \cite{MR4201903, MR3411100, MR3438649, MR3936129} for more results.

\subsubsection{Singularity formation}

\mbox{}

\vspace{0.1cm}

When it comes to the singularity formation for the $2D$ Keller-Segel equation, with the model in the $L^1$ critical case, any finite-time blowup solution to \eqref{equation, Keller-Segel} is of type II\footnote{The solution of \eqref{equation, Keller-Segel} exhibits type I blowup at $t=T$ if 
\begin{equation*}
    \limsup_{t \to T} (T-t) \| \rho(t) \|_{L^\infty} < \infty,
\end{equation*}
otherwise, the blowup is of type II.
}, see \cite{Naito_Suzuki_typeIIblowup}. In particular, Rapha\"{e}l and Schweyer \cite{Raphael_Schweyer_2DtypeII_blowup14} precisely constructed a radially stable finite-blowup solution with the form
\be
\rho(t,x) \approx \frac{1}{\lambda^2(t)} U\left(\frac{x}{\lambda(t)} \right), \quad U(x) = \frac{8}{(1+|x|^2)^2}, \nonumber
\ee
where the blowup rate
\[
\lambda(t) = \sqrt{T-t} e^{-\sqrt{\frac{|\ln (T-t)|}{2}}+ O(1)}, \quad t \to T.
\]
Subsequently, Collot, Ghoul, Masmoudi, and Nguyen \cite{Collot_Ghoul_Masmoudi_Nguyen_2DtypeII_blowup22} extended to the non-radial stability and obtained a more precise blowup rate. In the same work \cite{Collot_Ghoul_Masmoudi_Nguyen_2DtypeII_blowup22}, they also constructed countably many unstable blow-up solutions. Later, Buseghin, D\'avila, Del Pino, and Musso \cite{buseghin2023existence}  
further extended to the multiple blowup bubbles without collision with a different strategy. Most recently, Collot, Ghoul, Masmoudi, and Nguyen \cite{2DKSmultibubble} discovered a new blowup mechanism where two bubbles would collapse and collide simultaneously.

Compared with $2D$ case, the singularity formation for the $3D$ Keller-Segel equation \eqref{equation, Keller-Segel} is much more diverse, with various blowup formations known to exist. When it comes to self-similar blowup case, a countable family of radial self-similar blowup profiles have been identified \cite{Brenner_Constantin_Leo_Schenkel_Venkataramani_steady_state_99, MR1651769}. In particular, there is an explicit self-similar solution given by
\be
\rho_*(t,x) = \frac{1}{T-t}Q\left( \frac{x}{\sqrt{T-t}} \right), \quad \text{ with } \quad Q(x) = \frac{4(6+|x|^2)}{(2 + |x|^2)^2},
\label{profile}
\tag{SS}
\ee
with the profile $Q$ satisfying the elliptic equation
\be
-\Delta Q + \frac 12 \Lambda Q - Q^2 - \na Q  \cdot \na \Dein Q = 0.
 \label{eqprofileQ}
\ee
This explicit self-similar solution was conjectured to be stable under the radial perturbation in \cite{Brenner_Constantin_Leo_Schenkel_Venkataramani_steady_state_99} based on numerical evidence, which was later rigorously established via semigroup theory by Glogi\'c and Sch\"orkhuber \cite{MR4685953}. Collot and Zhang \cite{collotzhangexicitedKS24} generalized their result to the low regularity and other self-similar profiles using the energy method and maximal principle; in the non-radial setting, the authors \cite{ksnsblowup} performed a finite-codimensional stability analysis of \eqref{profile} via an abstract semigroup method (see \cite{engel2000one,MR4359478}). Beyond self-similar blow-up solutions, other non-self-similar formations have also been identified. For example, Collot, Ghoul, Masmoudi, and Nguyen \cite{Collot_Ghoul_Masmoudi_Nguyen_3Dblowup_Collasping-ring_blowup23} discovered a type II blowup solution that concentrates in a thin layer outside the origin and collapses towards the origin. In addition, Nguyen, Nouaili, and Zaag \cite{nguyen2023construction} found a type I-Log blowup solution.

\subsection{Main result}

\mbox{}

The main result of this paper is the nonradial stability of \eqref{profile} building upon our previous finite-codimensional result \cite{ksnsblowup}. This generalizes the radial stability of \eqref{profile} established by Glogi\'c and Sch\"orkhuber \cite{MR4685953} to the non-radial setting.
 
\begin{theorem}[Stability of self-similar solution \eqref{profile} to $3D$ Keller-Segel equation]
\label{thm: stability of ss solu}
    There exists $\delta \ll 1$ such that for any given $\ep_0 \in H^2(\RR^3)$ with $\| \ep_0 \|_{H^2} \le \delta$, there exist $(\lambda_0,x_0) \in \RR^+ \times \RR^3$ with
    \[
    |\lambda_0 -1| + |x_0| \lesssim \| \e_0\|_{H^2}
    \]
    such that the initial data
    \[
    \rho_0 = Q + \ep_0,
    \]
    with $Q$ given in \eqref{profile}, generates a solution to \eqref{equation, Keller-Segel} blowing up at $T=\lambda_0^2$ with
    \begin{equation}
    \rho(t,x)= \frac{1}{\lambda_0^2 -t} \left( Q+ \ep \right) \left( t, \frac{x-x_0}{\sqrt{\lambda_0^2 -t}} \right),
   \label{fullstabblowup}
    \end{equation}
    where
    \begin{equation}
    \| \ep(t) \|_{H^2} \lesssim (T-t)^{\tilde \epsilon}, \quad \forall \; t \in (0,T),
    \label{smallnessremainingterm}
    \end{equation}
    for some $\tilde \epsilon >0$ independent of $\e_0$.
\end{theorem}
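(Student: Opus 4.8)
The plan is to set up a self-similar (renormalized) formulation of \eqref{equation, Keller-Segel} around the profile $Q$ and run a modulated bootstrap argument, using the finite-codimensional stability from \cite{ksnsblowup} together with the mode stability of the linearized operator that this paper establishes. First I would introduce self-similar variables: write $\rho(t,x) = \frac{1}{\lambda^2(s)} (Q+\e)(s,y)$ with $y = \frac{x - x(s)}{\lambda(s)}$ and a rescaled time $s$ defined by $\frac{ds}{dt} = \frac{1}{\lambda^2}$, so that $Q$ becomes a stationary solution of the renormalized flow thanks to \eqref{eqprofileQ}. The perturbation $\e$ then solves an equation of the form $\partial_s \e = \L \e + \textrm{NL}(\e) + \Mod(s)$, where $\L$ is the linearized operator (whose nonlocal term $\na Q\cdot\na\Delta^{-1}\e + \na\e\cdot\na\Delta^{-1}Q$ is exactly the object analyzed in the bulk of the paper), $\textrm{NL}$ collects quadratic terms, and $\Mod(s)$ carries the modulation parameters $\dot\lambda/\lambda$ and $\dot x/\lambda$. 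Decomposing into spherical harmonic classes, the translation invariance forces a single genuinely unstable direction (the known unstable mode of $\L$ in the first spherical class), which the modulation of $x(s)$ is designed to kill; all other directions are controlled by the spectral gap coming from the mode stability result and the abstract semigroup bounds of \cite{ksnsblowup}.

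The core steps, in order, would be: (i) choose the modulation equations for $(\lambda,x)$ by imposing orthogonality conditions of $\e$ to the adjoint unstable/zero directions of $\L$ (the translation null directions and the unstable eigenfunction), and derive the resulting ODEs $|\dot\lambda/\lambda| + |\dot x/\lambda| \lesssim \|\e\|$; (ii) use the mode stability / finite-codimensional spectral picture to obtain a coercivity (energy) estimate for $\L$ on the orthogonal complement in the $H^2$-type norm — this is where the nonlocal operator and the wave-operator localization from Li-Wei-Zhang \cite{oseenli}, adapted here, enter, upgrading the abstract semigroup decay of \cite{ksnsblowup} into a usable differential inequality; (iii) close a bootstrap on $\mathcal N(s) := \|\e(s)\|_{H^2}$, showing $\frac{d}{ds}\mathcal N^2 \le -c\,\mathcal N^2 + (\text{lower order})$, hence $\mathcal N(s) \lesssim e^{-\tilde\epsilon s}$ for some $\tilde\epsilon > 0$; (iv) integrate the modulation ODEs to see that $\lambda(s) \to \lambda_\infty > 0$ and $x(s) \to x_\infty$, with $|\lambda_\infty - 1| + |x_\infty| \lesssim \|\e_0\|_{H^2}$, and that $s \to \infty$ corresponds to $t \to T := \lambda_\infty^2$; (v) translate back: the decay $e^{-\tilde\epsilon s}$ becomes $(T-t)^{\tilde\epsilon}$ after adjusting the exponent, since $T - t \sim \lambda^2 \sim e^{-2s}$ up to constants, and relabel $(\lambda_0,x_0) = (\lambda_\infty, x_\infty)$.

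The main obstacle I expect is step (ii): converting the mode stability statement — which is essentially a statement about the absence of eigenvalues with $\real \ge 0$ other than the accounted-for ones — into a quantitative coercivity estimate in a concrete topology that is compatible with the quadratic nonlinearity and with the modulation error terms. The nonlocal term $\na\e\cdot\na\Delta^{-1}Q$ is not a compact perturbation in the naive sense at the level of the $H^2$ energy, so one cannot simply invoke a soft spectral argument; the wave-operator conjugation is precisely what trades the nonlocality for a more tractable operator on which a monotonicity/energy identity can be written. A secondary subtlety is the control of the nonlinear term $\textrm{NL}(\e)$ in three dimensions in $H^2$: one needs $H^2(\RR^3) \hookrightarrow L^\infty$ and product estimates, plus care with the elliptic term $\na\e\cdot\na\Delta^{-1}\e$, which is quadratic but mildly nonlocal — this is routine but must be done in the weighted self-similar norm so that it is genuinely lower order relative to the linear coercivity. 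Once the coercivity and the modulation ODEs are in hand, the bootstrap and the final change of variables are standard.
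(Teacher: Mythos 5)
Your proposal is correct in spirit but follows a genuinely different route from the paper. You propose a \emph{dynamic modulation} scheme: time-dependent parameters $(\lambda(s),x(s))$ with orthogonality conditions killing the four unstable directions ($\Lambda Q$ in the radial class and the three $\partial_{x_j}Q$ in $l=1$ --- note there are four, not the ``single'' direction your text suggests), modulation ODEs, and integration of those ODEs to identify the blowup point and time. The paper instead fixes the renormalization $\mu(\tau)=e^{-\tau/2}$ once and for all, splits $\ep=P_s\ep+P_u\ep$ with the Riesz projections of Proposition \ref{proposition: spectral properties of L}, proves existence of a good unstable component of the \emph{initial data} by an outgoing-flux/Brouwer shooting argument (Proposition \ref{propbootstrap}), upgrades this to a Lipschitz stable manifold $\ep_{u0}=\Phi(\ep_{s0})$ (Proposition \ref{propLipde}), and finally shifts an arbitrary initial datum onto that manifold by choosing the four symmetry parameters $(\lambda_0,x_0)$ at time zero via a fixed-point argument (Lemma \ref{lemmatching}). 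Your approach, if carried out, reaches the theorem's statement more directly (no matching step), at the cost of justifying the well-posedness of the time-dependent decomposition; the paper's approach works entirely with the fixed operator $\calL$ and its spectral projections, at the cost of the topological argument and the Lipschitz-continuity estimate.

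Two caveats on your steps (ii)--(iv). First, mode stability plus the abstract theory yields a \emph{semigroup} decay $\|e^{-\tau\calL}v\|_{H^k}\lesssim e^{-\delta_g\tau/2}\|v\|_{H^k}$ on the stable range, not a coercive quadratic form for $\calL$ in $H^2$; a literal differential inequality $\frac{d}{ds}\calN^2\le -c\,\calN^2+\dots$ for the full $H^2$ norm of the stable part is not available without constructing an equivalent modified norm. The paper avoids this by running Duhamel with the semigroup at the $H^1$ level and a separate $\dot H^2$ energy estimate whose damping comes from the explicit scaling term, not from the spectral gap. Relatedly, the wave-operator conjugation is used only inside the proof of Theorem \ref{thm: mode stability main}; it plays no role in the nonlinear estimates. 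Second, in step (iv) the scale $\lambda(s)$ tends to $0$ (it is the blowup scale, $\lambda\sim\sqrt{T-t}$); what converges to a positive limit is $\lambda(s)e^{s/2}$, which is the quantity identifying $T=\lambda_0^2$. These are repairable, but as written they are the points where your outline would need genuine work.
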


\mbox{}

\noindent \textit{Comments on Theorem \ref{thm: stability of ss solu}}

\mbox{}

\noindent \textit{1. From radial to nonradial stability.}
Most of the $3D$ results mentioned previously for construction and stability of blowup are in the radial setting, which is largely due to the \textit{partial mass variable} 
\be
m_\rho(t, r) := \int_{|x| \le r} \rho(t, x) dx = 4\pi \int_0^{r} \rho(t, s)s^2 ds, 
\label{partial mass}
\ee
 transforming the original system \eqref{equation, Keller-Segel} into a \textit{local} scalar equation. In the absence of radial symmetry, the \textit{nonlocal nature} could create great challenges for spectral analysis, matching asymptotics, maximum principle and so on. Nevertheless, understanding the nonradial setting should be natural and essential, particularly in that many chemotaxis models involve anisotropic physical effects which break the radial symmetry, such as the buoyancy force from fluids \cite{hukiselevyao2023suppression,ksnsblowup}.
 
As a first step towards nonradial stability, our finite-codimensional analysis \cite{ksnsblowup} exploited that the functional framework of \cite{engel2000one,MR4359478} could survive nonlocal perturbation. In this work, we resolve the spectral analysis for the nonlocal and non-self-adjoint linearized operator (Theorem \ref{thm: mode stability main}) and conclude the nonradial stability. 

\mbox{}

\noindent \textit{2. Extension of Theorem \ref{thm: stability of ss solu}.} Our method is robust enough so that the result can be generalized in the following various senses.

\mbox{}

\vspace{-0.3cm}

\noindent \textit{$(1)$ Regularity.} The choice of $H^2$ regularity guarantees simple nonlinear estimates thanks to Sobolev embedding $H^2(\RR^3) \hookrightarrow L^\infty(\RR^3)$, and can be replaced by any $H^k$ space with $k \ge 2$ with slight modifications to the proof. However, it is not obvious to obtain $L^\infty$ stability as in Collot-Zhang \cite{collotzhangexicitedKS24} via our current method. 

\mbox{}

\vspace{-0.3cm}

\noindent \textit{$(2)$ Other profiles.} For other radial self-similar profiles from \cite{Brenner_Constantin_Leo_Schenkel_Venkataramani_steady_state_99, MR1651769} solving \eqref{eqprofileQ}, our analysis should be applicable with an numerical approximate profile to perform quantitative estimates. In particular, the wave operator method requires a non-vanishing condition, which is supported by numerical evidence (see Remark \ref{rmkprof}). Consequently, we expect nonradial finite-codimensional nonlinear stability boils down to radial finite-codimensional mode stability as required in Collot-Zhang \cite{collotzhangexicitedKS24}.

Besides, for the Keller-Segel equation \eqref{equation, Keller-Segel} in higher dimensions $N \ge 3$, similar to \eqref{profile}, there exists an explicit self-similar blowup solution
\[
\rho_N(t,x) = \frac{1}{T-t} Q_N \left( \frac{x}{\sqrt{T-t}} \right),
\text{ where } Q_N(x) = \frac{4(N-2)(2N+|x|^2)}{(2(N-2) + |x|^2)^2}, \quad x \in \RR^N.
\]
One should be able to establish the nonradial stability of $\rho_N$ following the same argument. 

\mbox{}

\vspace{-0.3cm}

 \noindent \textit{$(3)$ Application to other PDE models.} 
The nonradial stability analysis of \eqref{profile} establishing Theorem \ref{thm: stability of ss solu} can be applied to other PDE models, including the $3D$ Keller-Segel-Navier-Stokes system with buoyancy force \cite{ksnsblowup} and possibly other subcritical perturbation of \eqref{equation, Keller-Segel}.

\mbox{}
\subsection{Mode stability}
\label{subsec: mode stability}

One major input of this paper is an exact counting of unstable directions for the nonlocal linearized operator \eqref{eqdefcalL}. The main result is stated as follows.

\begin{theorem}[Mode stability]
\label{thm: mode stability main}
    For $k \ge 0$ and the linearized operator $\calL$ given in \eqref{eqdefcalL}, there exists $0 <\tilde{\delta} \ll1$ such that
    \be
    \sigma_{disc}\left(\calL \big|_{H^k(\RR^3)} \right) \cap \{ z \in \CC: \Re z < \tilde{\delta} \} = \Big\{ -1 , -\frac{1}{2} \Big\},
    \label{discrete unstable eigen of L}
    \ee
    with the related generalized unstable eigenspaces as
    \be
    \bigcup_{m \ge 1} \ker (\calL-\l)^{m} = \ker (\calL-\l) = \begin{cases}
        \text{span}\{ \partial_{x_1} Q,
         \partial_{x_2} Q, \partial_{x_3} Q \},  & \text{ when } \l = -\frac 12, \\
         \text{span}\{ \Lambda Q \}, & \text{ when } \l = -1.
    \end{cases} \label{equnstabspec2}
    \ee
\end{theorem}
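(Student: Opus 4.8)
\emph{Plan.} The plan is to diagonalise $\calL$ along spherical classes, dispose of the radial class by the known radial analysis, the high classes by a quantitative energy/perturbative estimate, and the first spherical class by a transmutation (wave operator) argument; since the count will be the same for every $k$, it suffices to work in $L^2$.

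\emph{Reduction and spherical decomposition.} First I would observe that it is enough to treat $k=0$. The candidate eigenfunctions $\Lambda Q$ and $\partial_{x_i}Q$ are smooth and, from $Q(y)\sim 4|y|^{-2}$ in \eqref{profile}, decay like $|y|^{-4}$ and $|y|^{-3}$, hence belong to every $H^k(\RR^3)$; conversely, writing $\calL=-\Delta+\tfrac12\Lambda+(\text{bounded, decaying, nonlocal})$ as in \eqref{eqdefcalL}, the drift makes the essential spectrum of $\calL|_{H^k}$ lie in a half-plane $\{\Re z\ge c_0\}$ with $c_0>\tilde\delta$ for all $k\ge0$ (relatively compact perturbation of the rescaled Laplacian $-\Delta+\tfrac12\Lambda$, whose spectrum is explicit; cf.\ the framework of \cite{ksnsblowup}), so every element of $\sigma_{disc}(\calL|_{H^k})\cap\{\Re z<\tilde\delta\}$ is an isolated eigenvalue whose eigenfunction lies in $L^2$ and solves the same equation; thus this set, together with its generalized eigenspaces, is independent of $k\ge0$. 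Since $Q$ is radial, $\calL$ commutes with $SO(3)$ and restricts to each spherical class $\ell\ge0$, acting there as a one-dimensional operator $\calL_\ell$ on $L^2(\RR_+,r^2\,dr)$ in which $-\Delta$ is replaced by $-\partial_r^2-\tfrac2r\partial_r+\tfrac{\ell(\ell+1)}{r^2}$, the transport term $\nabla(\cdot)\cdot\nabla\Delta^{-1}Q$ becomes a local first-order term $h(r)\partial_r$ ($h$ built from the partial mass of $Q$), and $\nabla Q\cdot\nabla\Delta^{-1}(\cdot)$ becomes a one-dimensional integral operator with kernel from $\tfrac{r_<^\ell}{(2\ell+1)r_>^{\ell+1}}$. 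Hence $\sigma_{disc}(\calL|_{L^2})=\bigcup_{\ell\ge0}\sigma_{disc}(\calL_\ell)$; the symmetry eigenfunctions lie in the classes $\ell=0$ ($\Lambda Q$) and $\ell=1$ (radial part $Q'$), and a direct computation from \eqref{eqprofileQ} confirms $\calL_0(\Lambda Q)=-\Lambda Q$ and $\calL_1(Q')=-\tfrac12 Q'$.

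\emph{Radial and high classes.} For $\ell=0$ the statement is exactly the radial mode stability of \eqref{profile}: in $\{\Re z<\tilde\delta\}$ the only element of $\sigma_{disc}(\calL_0)$ is the simple eigenvalue $-1$ with eigenfunction $\Lambda Q$, with no Jordan block. I would invoke \cite{MR4685953} (or \cite{collotzhangexicitedKS24}) for this, the mechanism being the partial-mass change of variables that renders $\calL_0$ a local Sturm--Liouville operator; the absence of a Jordan block follows from non-degeneracy of the pairing of $\Lambda Q$ with the adjoint eigenfunction. For $\ell\ge2$ I would instead show $\calL_\ell$ has no spectrum in $\{\Re z<\tilde\delta\}$ at all: integrating by parts, $\Re\langle\tfrac12\Lambda f,f\rangle=\tfrac14\|f\|^2$ and the transport term contributes $+\tfrac12\langle Qf,f\rangle$, so $\Re\langle\calL_\ell f,f\rangle\ge\|\partial_r f\|^2+\ell(\ell+1)\|f/r\|^2+\tfrac14\|f\|^2-\tfrac32\langle Qf,f\rangle+\Re\langle(\text{nonlocal})f,f\rangle$; the nonlocal term is bounded with $O(1/\ell)$ norm (a Kato/Hardy estimate using the decay of $\nabla Q$), and a localized Hardy inequality absorbs the potential well into the centrifugal barrier once $\ell\ge\ell_0$, giving $\Re\langle\calL_\ell f,f\rangle\ge\tilde\delta\|f\|^2$ and hence, by the numerical-range bound for the generator $-\calL_\ell$, $\sigma(\calL_\ell)\subset\{\Re z\ge\tilde\delta\}$. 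The finitely many intermediate classes $2\le\ell<\ell_0$ are dispatched by a quantitative sharpening of the same estimate, tracking the explicit bounds on $Q$ from \eqref{profile} (and the comparison of Remark \ref{rmkprof}).

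\emph{First spherical class $\ell=1$: the crux.} Here $\calL_1$ is genuinely nonlocal and non-self-adjoint and carries the true unstable eigenvalue $-\tfrac12$ (eigenfunction $Q'$), so neither a perturbative treatment of the nonlocal term nor a direct variational argument applies; this is where I would adapt the wave operator method of \cite{oseenli}. The plan is threefold. (i) Reduce the spectral problem to a one-dimensional one on a weighted space on which an explicit ground-state-type conjugation removes the first-order terms and turns $\calL_1$ into $-\partial_r^2+V_{\mathrm{eff}}(r)+(\text{nonlocal})$, with $V_{\mathrm{eff}}$ confining (growing like $r^2/16$), singular-repulsive like $2/r^2$ at the origin, minus a bounded decaying well; equivalence of the relevant spectra across this change of framework is handled through \cite{ksnsblowup}. (ii) Construct a bounded, boundedly invertible wave operator $\calW$ --- of the form identity plus a Volterra-type integral operator whose kernel solves a transmutation (Goursat) problem derived from the desired intertwining --- that simultaneously localises the operator to a purely local Schrödinger-type operator $\calL_1^\sharp$ and exhibits the known eigenvalue $-\tfrac12$ as its ground state, so that $\sigma(\calL_1)=\sigma(\calL_1^\sharp)$ with generalized eigenspaces in correspondence; the existence and invertibility of $\calW$ is exactly where the non-vanishing condition of Remark \ref{rmkprof} enters, guaranteeing that the transmutation kernel is well-defined and does not degenerate. (iii) Analyse $\calL_1^\sharp$: since $Q$ is strictly decreasing, $Q'<0$, so the image of $Q'$ has no interior zero and is a ground state, whence $-\tfrac12=\min\sigma(\calL_1^\sharp)$ and is simple (in particular semisimple, so no Jordan block); a supersymmetric/Darboux factorisation of $\calL_1^\sharp+\tfrac12$ together with a Sturm comparison and the explicit form of $V_{\mathrm{eff}}$ then bounds the remaining spectrum below $\tilde\delta$. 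Pulling back through $\calW$ yields $\sigma(\calL_1)\cap\{\Re z<\tilde\delta\}=\{-\tfrac12\}$, simple. Combining the three regimes and choosing $\tilde\delta$ below all the gaps produced gives $\sigma_{disc}(\calL|_{L^2})\cap\{\Re z<\tilde\delta\}=\{-1\}\cup\{-\tfrac12\}$ with $\ker(\calL+1)=\mathrm{span}\{\Lambda Q\}$ and $\ker(\calL+\tfrac12)=\mathrm{span}\{\partial_{x_1}Q,\partial_{x_2}Q,\partial_{x_3}Q\}$ (three-dimensional because the degree-one harmonics span a three-dimensional space) and no generalized eigenvectors, uniformly in $k\ge0$, which is \eqref{discrete unstable eigen of L}--\eqref{equnstabspec2}. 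The hard part is clearly (ii)--(iii): building the wave operator with the required mapping properties --- in particular verifying the non-vanishing condition for the profile $Q$ --- and extracting a quantitative spectral gap for the transmuted local operator.
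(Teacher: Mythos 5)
Your overall architecture — spherical decomposition, citing \cite{MR4685953} for $l=0$, perturbative coercivity for high $l$, and a wave-operator/transmutation argument for $l=1$ — matches the paper. But there are two concrete gaps. First, the class $l=2$. You propose to handle all $2\le \ell<\ell_0$ by "a quantitative sharpening of the same estimate." The paper's quantitative version of exactly that estimate (Lemma \ref{lemhighsphcomp} plus the interpolation bound \eqref{eqnonlocalestintp}) degenerates at $l=2$: the factor $(2l-3)^{-1}$ controlling the nonlocal term equals $1$ there and the centrifugal barrier $l(l+1)=6$ is too weak against the potential well, so the direct $L^2$ inner-product coercivity fails. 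The paper has to switch methods entirely for $l=2$: a partial localization $D_{l+2}^{-1}\calL_l D_{l+2}$, a conjugation by $r^{0.2}$, symmetrization, and then the GGMT bound (Theorem \ref{thmGGMT}) with two numerically evaluated integrals \eqref{eqNpleff}. As written, your claim for $l=2$ is unsubstantiated and the natural implementation of it would fail.

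Second, the mechanics of your $l=1$ step differ from the paper's in a way that matters. You ask for a \emph{boundedly invertible} wave operator $\calW$ with $\sigma(\calL_1)=\sigma(\calL_1^\sharp)$, and then argue that $-\tfrac12$ survives as the nodeless ground state of the localized operator with a gap above it. The paper's intertwiner $T=I-\frac{\pa_r Q}{D_3^{-1}\pa_r Q}D_3^{-1}$ is deliberately \emph{not} invertible: it annihilates $\pa_r Q$, so the known unstable mode is removed rather than preserved, and the transmuted operator $\tilde\calL_1'$ is a self-adjoint Schr\"odinger operator with $\sigma(\tilde\calL_1')\subset[\tfrac25,\infty)$ — it does not contain $-\tfrac12$ at all. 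Localization and removal of the mode are achieved by one and the same non-invertible operator (the paper even remarks that the presence of the unstable mode is what makes the localization possible), and the paper notes $T$ fails the genuine wave-operator identity \eqref{eqwaveopprop} because $\pa_r Q\notin L^2_\omega$. Your variant therefore rests on an existence claim (an invertible localizing transmutation with full spectral correspondence including Jordan structure) that is not established and is in tension with how the construction actually works; you would also still need to rule out generalized eigenvectors at $-\tfrac12$ and to check carefully which function spaces the intertwining holds on (the paper does this by hand, tracking regularity of $T[\calL_1^n f_{1,m}]$ near $r=0$ and $r=\infty$ via Appendices \ref{appA}--\ref{appB}). These two points — the $l=2$ mechanism and the non-invertibility of the $l=1$ intertwiner — are where your proposal would need to be repaired.
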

\mbox{}

\noindent \textit{Comments on Theorem \ref{thm: mode stability main}.}

\mbox{}

\noindent \textit{$1.$ Mode stability of self-similar blowup solution.}
Briefly, mode stability means that all unstable modes arise from symmetries of the original equation, so that they are removable by choosing a correct reference frame during the nonlinear evolution. Thus it is a crucial step towards nonlinear stability. In our case, we prove every unstable mode of $\calL$ is generated either by translation invariance or scaling invariance, which give rise to $\nabla Q$ or $\Lambda Q$ respectively (see \eqref{eqexplicitunstablemode}). 

Various methods have been developed for the mode stability, or more generally, characterization of the spectrum of certain linearized operators: for local parabolic models, the linearized operator could be turned into a Schr\"odinger operator $-\Delta + V$, so that one can exploit Sturm-Liouville theory \cite{MR4400117,MR4073868,MR3986939} or supersymmetry method and GGMT bound \cite{MR4126325,MR4685953}\footnote{Among them includes the nonlocal model \eqref{equation, Keller-Segel} in 2D \cite{MR4400117} for non-radial data and in 3D \cite{MR4685953} for radial data. However, in the radial case, the partial mass variable \eqref{partial mass} localizes $\calL$; while in the 2D non-radial case, with the blowup profile being the ground state, the authors exploited its variational information and self-adjoint structure.}; for wave-type models, \cite{MR3623242,MR3475668,zbMATH07928637,MR3537340} used quasi-solution method or explicit hypergeometric functions; and for Schr\"odinger type equations, one has Jost function argument and linear Liouville argument in the bifurcation regime \cite{Limodestability,MR1852922}. However, these approaches are more or less relying on ODE analysis and hence merely effective for local differential operators. Extending them to nonlocal operators remains a significant challenge. 

Besides, regardless of nonlocality, variational properties can lead to spectral information, for example for the linearized operators of ground states of nonlinear Schr\"{o}dinger equations, Hartree equations \cite{lemzmannHartee09, Nakanishi_Schlag_invariant_manifold, WeinsteinlinearizedopofNLS85}, and 2D Keller-Segel equation under the blowup renormalization \cite{MR4400117}. Unfortunately, it is widely open to find variational characterization for self-similar profiles, in particular, unclear for \eqref{profile}. 

Another possible approach to mode stability of a specific nonlocal operator is matrix approximation through truncation and discretization, and then computing its spectrum numerically and estimating the error rigorously (see \cite{MR2214946}). Clearly, this would rely on heavy numerical assistance. 

\mbox{}

\noindent \textit{$2.$ Idea of the proof: perturbative analysis and wave operator method.}

In this paper, we propose an analytical method that fully exploits the structure of the equation \eqref{equation, Keller-Segel}, with the mere computer assistance used for evaluating two integrals \eqref{eqNpleff}. Thanks to the radial symmetry of $Q$, we restrict $\calL$ into spherical classes and analyze each class $l \ge 1$. The radial case $l = 0$ was done in \cite{MR4685953}.

Since $-\Delta$ is more coercive in high spherical classes, we treat the nonlocal part perturbatively for $l \ge 2$ and use arguments for Schr\"odinger operators. The $l = 2$ case requires more delicate treatment, involving a partial localization and GGMT bound.

The most challenging case is $l=1$, due to the presence of known eigenvalue and weak coercivity of $-\Delta$. Inspired by the \textit{wave operator method} introduced by Li-Wei-Zhang \cite{oseenli}, which was originally used to study the linear stability of rotating Oseen vortices under the $2D$ Navier-Stokes flow, we construct a \textit{formal wave operator} that simultaneously localizes the operator and removes the known unstable mode (see Proposition \ref{propwaveop2}), and this allows us to conclude the mode stability when $l = 1$. Readers can refer to Remark \ref{rmkwom} for comparison of these two settings. Besides, the wave operator could be constructed implicitly in more complicated scenario \cite{zbMATH07213543,zbMATH07154876}. 

We mention two special features of our application:

\mbox{}
\vspace{-0.35cm}

$(1)$ \textit{Substitution of localization and supersymmetry method:} 
Such construction works for not only $l = 1$ but also the radial case $l=0$ (Remark \ref{rmkradial}), while the partial mass localization only functions for $l = 0$. Hence we expect this structure might be useful for the linear nonlocal problems when direct localization is impossible. Interestingly, the existence of the unstable mode seems helpful for the localization. 

\mbox{}
\vspace{-0.35cm}

$(2)$ \textit{Independence of the explicit formula of $Q$.}
Although the construction in \cite{oseenli} seems magical and strongly relies on the special form of rotating Oseen vortices,  the construction of wave operator in our case depends only on the profile equation \eqref{eqprofileQ} and a non-vanishing condition (Remark \ref{rmkprof}), rather than on the explicit expression of $Q$. Therefore, the method is potentially flexible for other self-similar profiles or other nonlocal PDE models.

\subsection{Sketch of proof.} \label{sec14}

In this section, we will give the sketch of proof of Theorem \ref{thm: stability of ss solu} and Theorem \ref{thm: mode stability main}.

\vspace{-0.2cm}

\mbox{}

\noindent \textit{1. Set up: Self-similar renormalization and linearization.} 

\mbox{}

 \vspace{-0.3cm}

To study the stability of the self-similar blowup solution \eqref{profile}, we renormalize the system \eqref{equation, Keller-Segel} in the self-similar coordinate and consider the long time stability of $Q$ under the renormalized equation \cite{zbMATH03937666,zbMATH03969175,giga1989nondegeneracy}. Precisely, let
 \be 
 \rho(t,x)=\frac{1}{\mu^2} \Psi(\tau,y), \quad  y = \frac{x}{\mu}, \quad  \frac{d\tau}{dt} = \frac{1}{\mu^2}, \quad \tau\Big|_{t=0} =0, \quad \frac{\mu_\tau}{\mu} = -\frac{1}{2},
 \label{sscoordi}
 \ee
 then the original system \eqref{equation, Keller-Segel} is mapped to the following renormalized equation
 \be
 \partial_\tau \Psi + \frac{1}{2} \Lambda \Psi= \Delta \Psi + \na \cdot (\Psi \na \Dein \Psi),
 \label{eqrenormalized}
 \ee
 where 
 \be 
\Lambda = \Lambda_0 + \frac{1}{2}, \quad { with } \quad \Lambda_0 = y \cdot \na + \frac{3}{2}.
\label{scalingopdef}
\ee
It is easy to check that, $Q$, the self-similar profile given by \eqref{profile}, is the steady state of the renormalized equation \eqref{eqrenormalized}. 

Plugging the ansatz $\Psi = Q + \ep$, the error term $\ep$ solves the following equation
\be
\partial_\tau \ep = -\calL \ep + N(\ep),
\label{linearizedeq}
\ee
with $\calL$ the linearized operator defined by
\be
\calL \e = - \Delta \e + \frac{1}{2} \Lambda \e - 2 Q \e - \nabla \Dein Q \cdot \nabla \e - \na Q \cdot \nabla \Dein \e, \label{eqdefcalL} 
\ee
and $N(\ep)$ is the nonlinear term defined by
\be
N(\ep) = \na \cdot (\ep \na \Delta^{-1} \ep).
\label{nonlinearterm}
\ee
From \cite[Proposition 2.3, Remark 2.4]{ksnsblowup}, $\calL$ is a closed operator on $H^k(\RR^3)$ for all $k \ge 0$, and it possesses \textit{explicit unstable eigenmodes}
\be 
\calL (\Lambda Q) =- \Lambda Q,\quad \calL(\partial_{x_j} Q) = -\frac{1}{2} \partial_{x_j} Q,\quad {\rm for }\,\, j = 1, 2, 3,  \label{eqexplicitunstablemode}
\ee
obtained by
applying scaling transformation \eqref{scaling invariant} and spatial translation to the profile equation \eqref{eqprofileQ}. 

\mbox{}

\vspace{-0.2cm}

\noindent \textit{2. Proof of mode stability Theorem \ref{thm: mode stability main}.} 

\mbox{}

 \vspace{-0.3cm}

 Section \ref{sectionmodesta} is dedicated to the proof of Theorem \ref{thm: mode stability main}. In Section \ref{sec21}, we begin by restricting $\calL$ onto each spherical class (see Lemma \ref{thmLlresonHl}) and investigate the unstable spectrum of each $\calL_l$ separately.

 For $\calL_l$ with $l \ge 2$, we will verify that the nonlocal effect of $\calL_l$ should be dominated by the sufficiently strong positivity coming from the angular momentum term $\frac{l(l+1)}{r^2}$ for all $l \ge 2$. More specifically, in Section \ref{subsecl3}, we will establish the coercivity of $\calL_l$ with $l \ge 3$ by direct inner product estimate (see Proposition \ref{propmodstabl3}).
 As for the borderline scenario $l=2$, in Section \ref{subsecl2}, we employ a generalization of the partial mass transformation to extract more coercivity from $\Delta_l$ and refine the inner product estimate to GGMT bound (refer to Lemma \ref{lemsymestnon} and Theorem \ref{thmGGMT}). 

 For $\calL_1$, in Section \ref{sec24}, we construct in Proposition \ref{propwaveop2} a \textit{formal wave operator} that simultaneously localizes the operator and removes the known unstable mode. The conjugated operator $\tilde \calL_1$ (see \eqref{localLl1}) is then clearly coercive.

 Finally, we end the proof of Theorem \ref{thm: mode stability main} in Section \ref{subseccompleteproofL} by applying the coercivity estimates and a priori improvement of regularity for unstable eigenfunctions. Some necessary estimates are recorded in Appendix \ref{appA} and Appendix \ref{appB}. In Section \ref{sec26}, we record the linear theory of $\calL$ in Proposition \ref{proposition: spectral properties of L} as a corollary of Theorem \ref{thm: mode stability main} combined with the abstract semigroup theory established in \cite{ksnsblowup}.

\mbox{}

\vspace{-0.2cm}

\noindent \textit{3. Proof of nonlinear stability Theorem \ref{thm: stability of ss solu}.}

\mbox{}

\vspace{-0.3cm}

In Section \ref{sectionnonsta}, we first prove the nonlinear stability of $Q$ with sharp codimension $4$, and construct a Lipschitz continuous stable manifold. Using the four dimensional symmetry group, one can shift the initial data onto the stable manifold with the help of Lipschitz continuity, concluding the stability of \eqref{profile} without losing codimensions. This matching initial data argument comes from \cite{li2023stability}.

\subsection{Notation}

Throughout the paper, we use the notation $A\lesssim B$ to denote that there exists a constant $C>0$ such that $0 \le A \le C B$. Similarly, $A \sim B$ means that there exist constant $0<c<C$ such that $cA \le B \le CA$. We also denote by
\[
\la r \ra = \sqrt{1+r^2}.
\]

Next, we denote $\RR^+ := (0, \infty)$. And given an interval $I \subset \RR^+$ and a weight function $\omega: I \to \RR$, we define the $L_w^2(I)$ weighted inner product by
    \[ (f, g)_{L^2_\omega(I)} = \int_I f \bar g \omega dr, \]
and the weighted $L^2$ space $L^2_\omega(I)$ by the collection of all functions $f$ satisfying $\| f \|_{L_w^2(I)}^2 : = (f,f)_{L_w^2(I)} < \infty$. Moreover, we define $C_c^\infty(I)$ as the collection of all  
 $C^\infty(I)$ functions with compact support in $I$. 
    
We call the vector of form $\alpha = (\alpha_1, \alpha_2, \alpha_3) \in \NN^3$ the multi-index of order $|\alpha|=\alpha_1 + \alpha_2 +\alpha_3$. For any given scalar function $f,g$, we define the partial derivative of $f$ with respect to the multi-index $\alpha$ by
\[
\partial^\alpha f(x) = \partial_{x_1}^{\alpha_1} \partial_{x_2}^{\alpha_2} \partial_{x_3}^{\alpha_3} f(x),
\]
and for any $k \ge 0$, we denote $\calD^k$ by
\[
\calD^k f = (\partial^\alpha f)_{|\alpha|=k}.
\]
In particular, for $k=1$, we simplify the notation into $\calD f =(\partial_{x_1} f, \partial_{x_2}f , \partial_{x_3} f)$, which is the gradient of $f$. Hence for any integer $k \ge 0$, we define the inner product on $\dot H^k$ and $H^k$ by 
\[ (f,g)_{\dot H^k} = (\calD^k f, \calD^k g)_{L^2} := \sum_{|\alpha|=k}(\partial^\alpha f, \partial^\alpha g)_{L^2},\quad (f,g)_{H^k} = (f,g)_{L^2} +  (f,g)_{\dot H^k}.\]
And $H^k$ (or $\dot H^k$) is the collection of all functions with finite $H^k$ (or $\dot H^k$) norm. We also write $H^\infty = \cap_{k \ge 0} H^k$. In addition, to avoid confusion, we denote $H^{k} (\RR^3; \RR)$, $H^k(\RR^3;\CC)$ (or $\dot H^{k} (\RR^3; \RR)$, $\dot H^k (\RR^3; \CC)$) for the collection of all $\RR$-valued or $\CC$-valued functions respectively with finite $H^k$ (or $\dot H^k$) norm. Out of the simplicity of notation, we will use $H^k(\RR^3)$ (or $\dot H^k(\RR^3)$) to refer $H^k (\RR^3; \CC)$ (or $\dot H^k (\RR^3; \CC)$) in Section \ref{sectionmodesta} and refer $H^k (\RR^3; \RR)$ (or $\dot H^k (\RR^3; \RR)$) in Section \ref{sectionnonsta}.

In addition, for any $k \in \mathbb{Z}$, we define the differential operator $D_k$ by
\be 
D_k := \pa_r + \frac{k}{r} = r^{-k} \pa_r r^k. 
\label{Dkdef}
\ee
We also define
\be 
D_k^{-1} = \left\{ \begin{array}{ll}
   r^{-k} \int_0^r f s^k ds,  & k > 0, \\
   - r^{-k} \int_r^\infty f s^k ds,  & k \le 0.
\end{array} \right. \label{eqdefDk-1}  
\ee

Furthermore, for a Banach space $X$, for given $x_0 \in X$ and $\epsilon > 0$, we define an open ball in $X$ with center $x_0$ and radius $\epsilon$ by
    \[ B^X_\epsilon(x_0) = \{ x \in  X: \| x - x_0 \|_{X} < \epsilon \}.   \]

Finally, if $A$ is a linear operator on a Hilbert space $H$, then we denote $\rho(A)$, $\sigma(A), \sigma_{disc}(A)$ to be the resolvent set, spectral set of $A$, discrete spectral set of $A$ respectively. 

\begin{acknowledgement}
The work of Z.L. is part of the ERC starting grant project FloWAS that has received funding from the European Research Council (ERC) under the Horizon Europe research and innovation program (Grant agreement No. 101117820). Z.L. is also partially supported by the ERC advanced grant SWAT. T.Z. is partially supported by MOE Tier $1$ grant  A-0008491-00-00. The authors gratefully thank Te Li and Yao Yao for the valuable discussions and suggestions about this work. 
\end{acknowledgement}

 \section{Mode stability and linear theory}
 \label{sectionmodesta}

 \subsection{Preliminaries} 
 \label{sec21}
 First of all, we recall $L^2(\RR^3)$ direct decomposition using spherical harmonics. From \cite[Theorem $3.5.7$]{harmonicsimon},
\be
L^2(\RR^3) = \bigoplus_{l=0}^\infty \calH_{(l)}, \quad {\rm where}\quad \calH_{(l)} :=  L^2(\RR^+, r^2 dr) \otimes \calY_{(l)},
\label{directdecomofL2}
\ee
where $\calY_{(l)} = \text{span}\{ Y_{lm} \}_{m=-l}^l$ denotes the $(2l+1)$ dimensional eigenspace corresponding to the eigenvalue $-l(l+1)$ of the spherical Laplacian $\Delta_{\mathbb{S}^2}$ acting on $L^2(\mathbb{S}^2)$ with $\{ Y_{lm} \}_{l \ge 0, |m| \le l}$ as the orthonormal basis of $L^2(\mathbb{S}^2)$. Therefore, for any $u \in L^2(\RR^3)$, we can decompose it into
\be
u(x) = \sum_{l=0}^\infty u_{l}(x)  = \sum_{l=0}^\infty \sum_{m=-l}^{l} u_{lm}(r) Y_{lm}(\Omega),
\label{sphercial harmonic decomposition: f}
\ee
where $x = r\Omega$ with $r= |x|$ and $\Omega \in \mathbb{S}^2$, $u_{l} \in \calH_{(l)}$ and $u_{lm} \in L^2(\RR^+,r^2 dr)$, and the following Pythagorean formula satisfies:
\[
\| u \|_{L^2(\RR^3)}^2 = \sum_{l=0}^\infty \| u_l \|_{L^2(\RR^3)}^2
= \sum_{l=0}^\infty \sum_{m=-l}^l \| u_{lm} \|_{L^2(\RR^+; r^2 dr)}^2.
\]
In addition, if $u \in H^k(\RR^3)$ with $k \ge 0$, for each component $u_{lm}(r) Y_{lm}(\Omega)$ in \eqref{sphercial harmonic decomposition: f} must lie in $H^k$ as well, and satisfy 
\[
\| u \|_{\dot H^k(\RR^3)}^2 = \sum_{l=0}^\infty \| u_l \|_{\dot H^k(\RR^3)}^2
= \sum_{l=0}^\infty \sum_{m=-l}^l \big\| u_{lm}(r) Y_{lm}(\Omega) \big\|_{\dot H^k(\RR^3)}^2,
\]
since Fourier transform preserves spherical harmonic class (see \cite[Theorem 3.10]{MR0304972}). 
And in later discussion, we always regard $u_{lm}(r)$ as a radial function on $\RR^3$, and thus $u_{lm} \in L^2(\RR^3)$.

 With the direct decomposition of $L^2(\RR^3)$ space given in \eqref{directdecomofL2}, to prove Theorem \ref{thm: mode stability main}, the main idea is to consider the behavior of operator $\calL$ restricted on each $\calH_{(l)}$. Now we will prove that the linearized operator $\calL$ acts invariantly on $\calH_{(l)}$ and find an explicit form of restriction of $\calL$ on each $\calH_{(l)}$. 
 \begin{lemma} \label{thmLlresonHl}
 The linearized operator given in \eqref{eqdefcalL} acts invariantly on $\calH_{(l)}$. For all $k \ge 0$, and $u \in  D \left( \calL \Big|_{H^k} \right)$, with the decomposition \eqref{sphercial harmonic decomposition: f}, we have
 \[
 \calL u = \sum_{l=0}^\infty \sum_{m=-l}^l  (\calL_l u_{lm}(r)) Y_{lm}(\Omega),
 \]
where $\calL_{l}$ can be explicitly written by
 \be
 \calL_l f =- \Delta_{l} f + \frac{1}{2} \Lambda f -2 Qf - \partial_r \Delta^{-1} Q \cdot  \partial_r f - \partial_r Q \cdot \partial_r \Delta_l^{-1} f,
\label{eqdefcalLl}
 \ee
 with
\be
 -\Delta_{l} = -\partial_r^2 - \frac{2}{r} \partial_r + \frac{l(l+1)}{r^2},
 \label{Laplace on Hl}
 \ee
 and
 \be
\Delta_{l}^{-1} f= -\frac{1}{2l+1} \int_0^\infty \frac{r_<^l}{r_>^{l+1}} f(r') r'^2 dr.
\label{eqdefDeltal-1}
\ee
 \end{lemma}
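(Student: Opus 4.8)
\textbf{Proof proposal for Lemma \ref{thmLlresonHl}.}

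The plan is to verify term by term that each summand in the definition \eqref{eqdefcalL} of $\calL$ preserves the spherical harmonic decomposition \eqref{directdecomofL2} and to read off its action on the radial profiles $u_{lm}(r)$. The Laplacian $-\Delta$ is standard: in spherical coordinates $-\Delta = -\partial_r^2 - \frac{2}{r}\partial_r - \frac{1}{r^2}\Delta_{\mathbb{S}^2}$, and since $\Delta_{\mathbb{S}^2} Y_{lm} = -l(l+1) Y_{lm}$, it acts on $\calH_{(l)}$ as the $-\Delta_l$ of \eqref{Laplace on Hl}. The scaling operator $\frac12 \Lambda = \frac12(y\cdot\nabla + 2)$ is purely radial in its vector field part ($y \cdot \nabla = r\partial_r$), hence commutes with the angular variables and acts as $\frac12\Lambda$ on each radial component. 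The multiplication term $-2Q\e$ is immediate since $Q$ is radial, so multiplication by $Q$ preserves each $\calH_{(l)}$. This disposes of the three local terms and gives the first three terms of \eqref{eqdefcalLl}.

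The two nonlocal terms require identifying $\nabla \Delta^{-1}$ on each spherical class. The key point is that $\Delta^{-1}$ preserves $\calH_{(l)}$ with radial action given by \eqref{eqdefDeltal-1}: this is the standard Newtonian-potential expansion
\[
\Delta^{-1}f(x) = -\frac{1}{4\pi}\int \frac{f(x')}{|x-x'|}\,dx', \qquad \frac{1}{|x-x'|} = \sum_{l,m} \frac{4\pi}{2l+1}\frac{r_<^l}{r_>^{l+1}} Y_{lm}(\Omega)\overline{Y_{lm}(\Omega')},
\]
which upon integrating against $u_{lm}(r')Y_{lm}(\Omega')$ and using orthonormality of the $Y_{lm}$ on $\mathbb{S}^2$ yields $\Delta_l^{-1}$ as in \eqref{eqdefDeltal-1}; equivalently one checks directly that $g_{lm}(r) := \Delta_l^{-1}u_{lm}$ solves $\Delta_l g_{lm} = u_{lm}$ with the correct decay, using the representation $\Delta_l^{-1} = D_{l}^{-1} D_{-l-1}^{-1}$ built from the operators $D_k$ of \eqref{Dkdef}--\eqref{eqdefDk-1}. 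Now for the term $-\nabla\Delta^{-1}Q\cdot\nabla\e$: since $Q$ is radial, $\Delta^{-1}Q$ is radial, so $\nabla\Delta^{-1}Q = (\partial_r\Delta^{-1}Q)\,\hat r$, and the dot product with $\nabla\e$ extracts the radial derivative $\partial_r$ of each component $u_{lm}$, preserving $\calH_{(l)}$; this gives the fourth term. For the term $-\nabla Q\cdot\nabla\Delta^{-1}\e$: $\nabla Q = (\partial_r Q)\hat r$ is radial-times-$\hat r$, and $\nabla\Delta^{-1}\e$ has, in the $(l,m)$ class, radial component $\partial_r(\Delta_l^{-1}u_{lm})$ — the angular part of the gradient contributes a tangential vector orthogonal to $\hat r$ and drops out of the dot product — so this term acts as $-\partial_r Q\cdot\partial_r\Delta_l^{-1}$ on $u_{lm}$, giving the fifth term. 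Summing, $\calL u = \sum_{l,m}(\calL_l u_{lm})Y_{lm}$ with $\calL_l$ as claimed, and invariance of $\calH_{(l)}$ follows.

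The only genuinely delicate point — the rest being bookkeeping — is the treatment of the nonlocal operator $\Delta^{-1}$ on a function that is not globally smooth or rapidly decaying, i.e.\ justifying that $\Delta_l^{-1}$ in \eqref{eqdefDeltal-1} is the correct restriction for $u$ merely in the domain $D(\calL|_{H^k})$ rather than in Schwartz class, and that the formal manipulations (interchange of sum and integral, convergence of the $r_<^l/r_>^{l+1}$ kernel integrals, validity of $\nabla\Delta^{-1}\e \in L^2_{loc}$ so the dot products make sense). I would handle this by first establishing the identity on $C_c^\infty(\RR^3)$ or Schwartz functions, where everything converges absolutely and the expansions are classical, and then passing to the general $u \in D(\calL|_{H^k})$ by a density/closedness argument, using that $\calL$ is closed on $H^k$ (cited from \cite{ksnsblowup}) together with the continuity of $f \mapsto \Delta_l^{-1}f$ between appropriate weighted spaces. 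The elementary ODE factorization $\Delta_l^{-1} = D_l^{-1}D_{-l-1}^{-1}$ is convenient here for checking mapping properties and the boundary behavior at $r=0$ and $r=\infty$ without invoking the full Newtonian expansion.
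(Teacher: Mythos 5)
Your proposal is correct and follows essentially the same route as the paper: invariance of each local term (citing that $\nabla_{\mathbb{S}^2}$-tangency kills the angular part of $y\cdot\nabla$ and that $Q$ is radial) plus the multipole expansion of the Newtonian kernel to identify $\Delta_l^{-1}$, with your extra density/closedness discussion being a reasonable supplement the paper leaves implicit. One small slip in your parenthetical alternative: the factorization should be $\Delta_l^{-1}=D_{-l}^{-1}D_{l+2}^{-1}$ (from $\Delta_l=D_{l+2}D_{-l}$, cf.\ \eqref{eqDeltalDl}), not $D_l^{-1}D_{-l-1}^{-1}$ — but this does not affect the main argument.
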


 \begin{proof}
 Recall \cite[Theorem $3.5.8$]{harmonicsimon}, we know that $-\Delta$ acts invariantly on each $\calH_{(l)}$ and the restriction $\Delta_{l}$ satisfies \eqref{Laplace on Hl}. In addition, since for any spherically harmonic function $Y_{lm}$ is a function on the unit sphere $\mathbb{S}^2$, $\na Y_{lm}$ is a vector tangent to the sphere, which thus implies that 
\[
x \cdot \na Y_{lm} =0, \quad \text{ for any } l\ge 0 \text{ and }  -l \le m \le l.
\]
Hence $x\cdot \na $ acts invariantly on $\calH_{(l)}$ and the related restriction should be $r \partial_r$.

 Then it suffices to consider the behavior of $\Delta^{-1}$. From the \cite[Example $3.5.12$]{harmonicsimon}, we have the multipole expansion
 \[
 \frac{1}{|x-x'|} = 4 \pi \sum_{l=0}^\infty \sum_{m=-l}^l \frac{1}{2l+1} \frac{r_<^l}{r_>^{l+1}} Y_{lm}(\Omega) \bar{Y_{lm}}(\Omega'),
 \]
 where $r_<= \min\{ |x|, |x'| \}$ and $r_> = \max\{ |x|, |x'| \}$. Thus
 \begin{align*}
     & \quad \Delta^{-1} u 
     = -\frac{1}{4 \pi} \int \frac{1}{|x-x'|} u(x') dx'
      =  - \int_0^\infty \left( \int_{\partial B_1} \frac{1}{4 \pi |x-x'|} u(x') d\Omega' \right) r'^2 dr' \\
     & = -\int_0^\infty  \int_{\pa B_1} \left( \sum_{l=0}^\infty \sum_{m=-l}^l \frac{1}{2l+1} \frac{r_<^l}{r_>^{l+1}} Y_{lm}(\Omega) \overline {Y_{lm}(\Omega')} \right) \left(\sum_{l'=0}^\infty \sum_{m'=-l'}^{l'} u_{l'm'}(r') Y_{l'm'}(\Omega') \right) d\Omega' r'^2 dr' \\
     &=- \sum_{l=0}^\infty \sum_{m=-l}^l \frac{1}{2l+1}\left( \int_0^\infty \frac{r_<^l}{r_>^{l+1}} u_{lm}(r')r'^2 dr' \right) Y_{lm}(\Omega), 
 \end{align*}
which implies that $\Delta^{-1}$ acts invariantly on $\calH_{(l)}$ and the related restricted operator $\Delta_{l}^{-1}$ should be \eqref{eqdefDeltal-1}.
\end{proof}

Furthermore, we record some elementary decomposition formulas for $\Delta_l$ and $\Delta_l^{-1}$ into first order operators.
 \begin{lemma} For $l \ge 0$,
    \bea
    \Delta_l &=& D_{l+2} D_{-l},\qquad
    \Delta_l^{-1} \,\,=\,\, D_{-l}^{-1} D_{l+2}^{-1},  \label{eqDeltalDl} \\
 \pa_r \Delta_l^{-1} &=& \frac{1}{2l+1} \left[ (l+1) D_{l+2}^{-1} + l D_{-(l-1)}^{-1} \right], \label{eqDeltalintop1} \\
 \Delta_l^{-1}\pa_r &=& \frac{1}{2l+1} \left[ (l+2) D_{l+1}^{-1} + (l-1) D_{-l}^{-1} \right], \label{eqDeltalintop2}
\eea
where $D_{k}$ and $D_{k}^{-1}$ are defined in \eqref{Dkdef} and \eqref{eqdefDk-1}.
\end{lemma}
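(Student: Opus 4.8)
The plan is to verify each identity directly from the definitions of $D_k$ in \eqref{Dkdef} and $D_k^{-1}$ in \eqref{eqdefDk-1}, together with the explicit form \eqref{Laplace on Hl} of $\Delta_l$ and \eqref{eqdefDeltal-1} of $\Delta_l^{-1}$. First I would establish \eqref{eqDeltalDl}. Using $D_k = r^{-k}\pa_r r^k$, compute
\[
D_{l+2} D_{-l} f = r^{-(l+2)} \pa_r \big( r^{l+2} \cdot r^{l} \pa_r(r^{-l} f) \big) = r^{-(l+2)} \pa_r \big( r^{2l+2} \pa_r(r^{-l}f) \big),
\]
and expand: $\pa_r(r^{-l}f) = r^{-l}(f' - \tfrac{l}{r} f)$, so $r^{2l+2}\pa_r(r^{-l}f) = r^{l+2}(f' - \tfrac lr f)$, and applying $r^{-(l+2)}\pa_r$ yields $f'' + \tfrac 2r f' - \tfrac{l(l+1)}{r^2} f$, which is exactly $\Delta_l f$. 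The inverse identity $\Delta_l^{-1} = D_{-l}^{-1} D_{l+2}^{-1}$ then follows by checking that $D_{-l}^{-1} D_{l+2}^{-1}$ is a right (and left) inverse of $D_{l+2}D_{-l}$ on the relevant domain; concretely, I would either invoke uniqueness of the operator mapping $f$ to the solution of $\Delta_l g = f$ with the decay/regularity prescribed by \eqref{eqdefDk-1}, or simply plug the iterated-integral definitions into \eqref{eqdefDeltal-1} and match kernels using $r_<^l/r_>^{l+1}$.

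Next, for \eqref{eqDeltalintop1}, I would write $\pa_r \Delta_l^{-1} f$ using \eqref{eqdefDeltal-1}: split the integral at $r'=r$ into the near part $-\tfrac{1}{2l+1} r^{-(l+1)}\int_0^r f(r') r'^{l+2} dr'$ and the far part $-\tfrac{1}{2l+1} r^{l}\int_r^\infty f(r') r'^{1-l} dr'$, then differentiate in $r$. The boundary terms from differentiating the variable limits cancel, and one is left with
\[
\pa_r \Delta_l^{-1} f = \frac{l+1}{2l+1}\, r^{-(l+2)}\!\int_0^r f(r') r'^{l+2} dr' + \frac{l}{2l+1}\, r^{l-1}\!\int_r^\infty f(r') r'^{1-l} dr',
\]
which is precisely $\tfrac{1}{2l+1}[(l+1) D_{l+2}^{-1} + l D_{-(l-1)}^{-1}]f$ by \eqref{eqdefDk-1} (note $-(l-1) \le 0$ when $l\ge 1$, matching the lower branch, and the $l=0$ case is immediate since the second term drops). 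Formula \eqref{eqDeltalintop2} is obtained analogously: apply $\Delta_l^{-1}$ to $\pa_r f$, integrate by parts in each of the two integrals against the kernel $r_<^l/r_>^{l+1}$ to move the derivative off $f$, collect the resulting boundary terms (the interior ones at $r'=r$ again cancel), and identify the two remaining pieces with $\tfrac{l+2}{2l+1}D_{l+1}^{-1}$ and $\tfrac{l-1}{2l+1}D_{-l}^{-1}$.

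I do not anticipate a serious obstacle here — the whole lemma is a bookkeeping exercise in one-dimensional calculus. The only point requiring mild care is the treatment of the endpoint terms at $r'=0$ and $r'=\infty$: one must check that, on the natural domain (say $f$ smooth, compactly supported away from the origin, or with the decay implicit in $u\in H^k(\RR^3)$), the integrals converge and the boundary contributions from integration by parts vanish, so that the formal manipulations are justified. I would state these identities as holding on $C_c^\infty(\RR^+)$ (or Schwartz radial functions), which suffices for the later density arguments, and note that the case distinction $l=0$ versus $l\ge 1$ in \eqref{eqdefDk-1} is automatically respected by the index shifts appearing in \eqref{eqDeltalintop1}–\eqref{eqDeltalintop2}.
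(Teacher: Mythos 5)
Your proposal is correct and follows the same route as the paper, whose proof consists of the single remark that the identities are straightforward to verify from \eqref{Dkdef}--\eqref{eqdefDk-1} and \eqref{Laplace on Hl}--\eqref{eqdefDeltal-1}; your computations (the cancellation of the $rf(r)$ boundary terms in \eqref{eqDeltalintop1} and of the endpoint terms after integration by parts in \eqref{eqDeltalintop2}, and the $l=0$ caveat handled by the vanishing coefficient) fill in exactly the details the paper omits.
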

\begin{proof}
    It is straightforward to verify these relations using \eqref{Laplace on Hl}-\eqref{eqdefDeltal-1}.
\end{proof}

 \subsection{Mode stability for $l \ge 3$} \label{subsecl3}
 In this section, we are devoted to the easiest case of the mode stability of $\calL_l$ with $l \ge 3$. The main result is as follows:
\begin{proposition}  \label{propmodstabl3} For $l \ge 3$, for any $f \in H^\infty(\RR^3) \cap  \calH_{(l)}$, we have the following coercivity: 
    \be \Re(\calL f, f)_{L^2(\RR^3)} \ge \frac 18 \| f \|_{L^2(\RR^3)}^2.
    \label{Llposivwith3}
    \ee
\end{proposition}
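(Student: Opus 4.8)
The plan is to estimate $\Re(\calL_l f,f)_{L^2(\RR^3)}$ term by term using the explicit form \eqref{eqdefcalLl}, exploiting that the angular momentum term $\frac{l(l+1)}{r^2}$ in $-\Delta_l$ provides a strong Hardy-type lower bound that dominates all the remaining (potential and nonlocal) contributions once $l\ge 3$. First I would record the positivity of the principal part: since $-\Delta_l = D_{l+2}D_{-l}$ by \eqref{eqDeltalDl}, integration by parts gives $\Re(-\Delta_l f,f)_{L^2} = \|D_{-l}f\|_{L^2}^2 \ge 0$, and more usefully the weighted Hardy inequality $\Re(-\Delta_l f, f)_{L^2(\RR^3)} \ge \big(l(l+1) + \tfrac14\big)\big\| \tfrac{f}{r}\big\|_{L^2(\RR^3)}^2$ (the $\tfrac14$ coming from the radial part, as $-\partial_r^2 - \tfrac2r\partial_r$ on $\RR^3$ already has a Hardy constant). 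Next, the scaling term: $\Re(\tfrac12\Lambda f, f)_{L^2} = \Re\big(\tfrac12(\Lambda_0 + \tfrac12)f,f\big)_{L^2}$, and since $\Lambda_0 = y\cdot\nabla + \tfrac32$ is such that $\Re(\Lambda_0 f, f)_{L^2} = 0$ (it is skew-adjoint plus the constant that makes the measure-weighted integration by parts vanish — precisely $\Re(y\cdot\nabla f, f)_{L^2(\RR^3)} = -\tfrac32\|f\|_{L^2}^2$), we get $\Re(\tfrac12\Lambda f,f)_{L^2} = \tfrac14\|f\|_{L^2(\RR^3)}^2$. This is already the source of the coercive constant; everything else must be controlled so as not to eat up more than $\tfrac18$.

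The heart of the matter is then to bound the three "bad" terms: $|2(Qf,f)_{L^2}|$, $|(\partial_r\Delta^{-1}Q\cdot\partial_r f, f)_{L^2}|$, and $|(\partial_r Q\cdot\partial_r\Delta_l^{-1}f, f)_{L^2}|$. For the potential term, since $Q(r) = \frac{4(6+r^2)}{(2+r^2)^2} \lesssim \langle r\rangle^{-2}$ and in fact $Q(r) \le \tfrac{C}{r^2}$ pointwise (with a computable $C$; note $Q$ is bounded and decays like $4/r^2$), one has $2|(Qf,f)_{L^2}| \le 2\|r^2 Q\|_{L^\infty}\big\|\tfrac{f}{r}\big\|_{L^2}^2$, which is absorbed by the Hardy term provided $l(l+1)+\tfrac14$ is large enough — true for $l\ge 3$ since $l(l+1)\ge 12$. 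Similarly $\partial_r\Delta^{-1}Q$ is an explicit radial function decaying like $r^{-2}$ (as $\Delta^{-1}Q$ behaves like a logarithm/constant — more precisely $\partial_r\Delta^{-1}Q = -\tfrac{1}{r^2}\int_0^r Q(s)s^2\,ds$ is bounded by $\tfrac{m(r)}{r^2}$ with $m(r)$ the partial mass, so $|\partial_r\Delta^{-1}Q| \lesssim \tfrac1r$ near $0$ and $\lesssim \tfrac1{r^2}$ at infinity, hence $|r\,\partial_r\Delta^{-1}Q| \le C$ uniformly), so that the first-order term is controlled by Cauchy-Schwarz as $\big\|\tfrac{f}{r}\big\|_{L^2}\|\partial_r f\|_{L^2}$ and then, after absorbing a small multiple of $\|\partial_r f\|_{L^2}^2 \le \Re(-\Delta_l f,f)_{L^2}$, again by the Hardy term. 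For the genuinely nonlocal term $(\partial_r Q\cdot\partial_r\Delta_l^{-1}f,f)_{L^2}$, I would use \eqref{eqDeltalintop1} to write $\partial_r\Delta_l^{-1}f = \tfrac{1}{2l+1}[(l+1)D_{l+2}^{-1}f + l D_{-(l-1)}^{-1}f]$ and apply the weighted Hardy/Hardy–Littlewood bounds for the operators $D_k^{-1}$ (these should be the elementary estimates collected in Appendix \ref{appA}): each $D_k^{-1}$ gains a power of $r$ and is bounded on $L^2(r^2dr)$-type weighted spaces with constant $O(1/|k+\text{const}|)$, so $\|r\,\partial_r\Delta_l^{-1}f\|_{L^2} \lesssim \big\|\tfrac{f}{r}\big\|_{L^2}$ uniformly in $l$; combined with $|\partial_r Q| \lesssim \langle r\rangle^{-3}$, in particular $|r^2\partial_r Q|\le C$, this term is $\lesssim \big\|\tfrac fr\big\|_{L^2}^2$ and once more absorbed.

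Putting the pieces together: for $l\ge 3$ the Hardy constant $l(l+1)+\tfrac14 \ge 12\tfrac14$ is large enough that the sum of the three bad terms, each of the form $(\text{const})\big\|\tfrac fr\big\|_{L^2}^2$ or absorbable into $\epsilon\Re(-\Delta_l f,f)_{L^2}$, leaves at least, say, $\tfrac18\|f\|_{L^2}^2$ from the $\tfrac14\|f\|_{L^2}^2$ produced by the scaling term (one might even keep a positive fraction of the Hardy term as a bonus). The one point requiring genuine care is making the constants in the $D_k^{-1}$ bounds \emph{uniform in $l$} and checking they truly beat $l(l+1)$ — I expect this, the uniform-in-$l$ control of the nonlocal term $\partial_r Q\cdot\partial_r\Delta_l^{-1}f$, to be the main (though still routine) obstacle, since the potential and drift terms are manifestly $l$-independent and handled by a single Hardy inequality. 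Finally, the density of $H^\infty\cap\calH_{(l)}$ and the fact that all integrations by parts are justified for such $f$ close the argument; the estimate then extends by the coercivity being used only on this dense class in the later eigenfunction-regularity argument.
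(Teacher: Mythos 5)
Your overall architecture is the same as the paper's: reduce to a single spherical harmonic, use the angular--momentum/Hardy coercivity of $-\Delta_l$, extract $\tfrac14\|f\|_{L^2}^2$ from the scaling term, and absorb the potential, drift and nonlocal contributions for $l\ge 3$. Your treatment of the genuinely nonlocal term via \eqref{eqDeltalintop1} together with weighted Hardy bounds for $D_k^{-1}$ is a legitimate alternative to the paper's route (the paper instead symmetrizes by writing $f=\Delta_l\Delta_l^{-1}f$, observes that one resulting piece is positive because $(\pa_r-\tfrac2r)\pa_rQ>0$, and controls the other by a quantitative interpolation bound for $\Delta_l^{-1}$); your version produces a bound of the form $C_l\|r^{-1}f_{lm}\|_{L^2}^2$ with $C_l$ at most about $2$ at $l=3$ and decreasing in $l$, so that piece closes once the weighted Hardy constants are written out (note they are not in Appendix \ref{appA}, which concerns eigenfunction decay; you would have to supply them).

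The genuine gap is the drift term. Since $\pa_r\Dein Q=D_2^{-1}Q=\tfrac{4r}{r^2+2}$, one has $\sup_r r\,\pa_r\Dein Q=4$ (your claimed asymptotics $r^{-1}$ near $0$ and $r^{-2}$ at infinity are incorrect --- the function behaves like $2r$ and $4/r$ --- although the uniform bound on $r\,\pa_r\Dein Q$ is right), so Cauchy--Schwarz only gives $|(\pa_r\Dein Q\,\pa_rf_{lm},f_{lm})_{L^2}|\le 4\,\|\pa_rf_{lm}\|_{L^2}\|r^{-1}f_{lm}\|_{L^2}$. Writing $a=\|\pa_rf_{lm}\|_{L^2}$ and $b=\|r^{-1}f_{lm}\|_{L^2}$, your scheme at $l=3$ needs $a^2+12b^2-2\cdot\tfrac92\,b^2-4ab-(\text{nonlocal})\ge 0$; even discarding the nonlocal term entirely, the quadratic $a^2-4ab+3b^2$ equals $-b^2$ at $a=2b$, a ratio perfectly compatible with Hardy's inequality $a\ge\tfrac12 b$, and a defect $-\|r^{-1}f_{lm}\|_{L^2}^2$ cannot be dominated by $\tfrac14\|f_{lm}\|_{L^2}^2$ for $f$ concentrated near the origin. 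So the constants do not close at the borderline case $l=3$ (they do for $l\ge4$). The fix is exactly what the paper does: integrate by parts, $\Re(-\nabla\Dein Q\cdot\nabla f,f)_{L^2}=-\tfrac12\int\nabla\Dein Q\cdot\nabla|f|^2\,dx=\tfrac12(Qf,f)_{L^2}\ge0$, so the drift term is favorable and the combined potential-plus-drift contribution is only $-\tfrac32(Qf,f)_{L^2}\ge-\tfrac{27}{4}\|r^{-1}f_{lm}\|_{L^2}^2$, which $l(l+1)\ge12$ absorbs with room to spare.
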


We split into the following several steps to prove the Proposition \ref{propmodstabl3}. First of all, if $f$ is of form $f(x) = f_{lm}(r) Y_{lm}(\Omega) \in H^\infty \cap \calH_{(l)}$, then
 we can obtain the following inner product form:
 \begin{lemma} \label{lemhighsphcomp} For any $f = f_{lm}(r) Y_{lm}(\Omega) \in H^\infty(\RR^3)\cap \calH_{(l)}$ with $l \ge 0$, $|m| \le l$, we have
    \be 
    \begin{split}
  & \quad 4 \pi \Re (\calL f, f)_{L^2(\RR^3)}  \\
  &= \| \pa_r f_{lm} \|_{L^2(\RR^3)}^2 +l(l+1) \| r^{-1} f_{lm} \|_{L^2(\RR^3)}^2 + \frac 14 \| f_{lm} \|_{L^2(\RR^3)}^2  
  - \frac 32 (Qf_{lm}, f_{lm})_{L^2(\RR^3)} \\
  & \quad + \frac 12 \int_{\RR^3} |\pa_r \Delta_l^{-1} f_{lm} |^2 \left[(\pa_r - 2r^{-1}) \pa_r Q  \right] dx - \frac{l(l+1)}{2} \int_{\RR^3} |\Delta_l^{-1} f_{lm}|^2 r^{-2} \pa_r^2 Q dx.
     \end{split} \label{eqquacalLl}
    \ee
\end{lemma}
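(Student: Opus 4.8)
The plan is to compute $\Re(\calL f, f)_{L^2(\RR^3)}$ directly from the definition \eqref{eqdefcalL} (equivalently \eqref{eqdefcalLl}), integrating by parts term by term and using the spherical harmonic decomposition to reduce everything to one-dimensional integrals in $r$ against the measure $r^2\,dr$. Writing $f = f_{lm}(r)Y_{lm}(\Omega)$, the first three terms $-\Delta f + \frac12\Lambda f - 2Qf$ are straightforward: the Laplacian gives $\|\pa_r f_{lm}\|_{L^2}^2 + l(l+1)\|r^{-1}f_{lm}\|_{L^2}^2$ after using $-\Delta_l = D_{l+2}D_{-l}$ and integrating by parts (the angular part contributes the factor $l(l+1)$ via $\int_{\mathbb S^2}|Y_{lm}|^2 = 1$); the scaling term $\frac12\Lambda = \frac12(\Lambda_0 + \frac12) = \frac12 y\cdot\na + \frac34 + \frac14$, and since $\Re\int \bar f\, y\cdot\na f = \frac12\int y\cdot\na|f|^2 = -\frac32\int|f|^2$ by integration by parts in $\RR^3$, the real part of $\frac12\Lambda$ contributes $(\frac34 - \frac34 + \frac14)\|f_{lm}\|_{L^2}^2 = \frac14\|f_{lm}\|_{L^2}^2$; and $-2Q f$ gives $-2(Qf_{lm},f_{lm})_{L^2}$.

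The remaining two nonlocal/transport terms $-\na\Dein Q\cdot\na f - \na Q\cdot\na\Dein f$ are where the work lies. Since $Q$ is radial, $\na\Dein Q = (\pa_r\Dein Q)\,\hat r$ and $\na f = (\pa_r f)\hat r + r^{-1}\na_{\mathbb S^2}f$, so $\na\Dein Q\cdot\na f = (\pa_r\Dein Q)(\pa_r f)$, and this term restricted to the class is $-\pa_r\Dein Q\cdot\pa_r f_{lm}$ as in \eqref{eqdefcalLl}. For its contribution to $\Re(\calL f,f)$ I would combine it with the other transport term, the key being the algebraic identity that lets us symmetrize: $\Re\big[-(\pa_r\Dein Q\,\pa_r f, f) - (\pa_r Q\,\pa_r\Delta_l^{-1}f, f)\big]$. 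Using $\Delta_l \Delta_l^{-1}f = f$, write $f = \Delta_l(\Delta_l^{-1}f) = D_{l+2}D_{-l}(\Delta_l^{-1}f)$, integrate by parts to move derivatives onto $Q$ (which is smooth and decaying so no boundary terms), and collect. The target is to express everything in terms of $\pa_r\Delta_l^{-1}f_{lm}$ and $\Delta_l^{-1}f_{lm}$ weighted by derivatives of $Q$, producing the last two terms of \eqref{eqquacalLl}: $\frac12\int|\pa_r\Delta_l^{-1}f_{lm}|^2[(\pa_r - 2r^{-1})\pa_r Q]\,dx$ and $-\frac{l(l+1)}{2}\int|\Delta_l^{-1}f_{lm}|^2 r^{-2}\pa_r^2 Q\,dx$. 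The $(\pa_r - 2r^{-1})$ combination is exactly $D_{-2} = r^{2}\pa_r r^{-2}$ arising from integration by parts of $\int (\cdots)\pa_r(r^2\,\cdots)$ type expressions against $r^2\,dr$, and the $l(l+1)r^{-2}$ factor tracks the angular-momentum term in $\Delta_l$.

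Concretely, for the term $-\na Q\cdot\na\Dein f$: its contribution is $-\Re(\pa_r Q\,\pa_r\Delta_l^{-1}f_{lm}, f_{lm})_{L^2(\RR^3)}$. Substitute $f_{lm} = \Delta_l(\Delta_l^{-1}f_{lm}) = (\pa_r^2 + \frac2r\pa_r - \frac{l(l+1)}{r^2})\Delta_l^{-1}f_{lm}$, denote $g = \Delta_l^{-1}f_{lm}$, and integrate $-\Re\int \pa_r Q\,(\pa_r g)\,\overline{(\pa_r^2 g + \frac2r\pa_r g - \frac{l(l+1)}{r^2}g)}\,r^2\,dr$ by parts. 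The $\pa_r^2\bar g$ and $\frac2r\pa_r\bar g$ pieces combine with $\pa_r g$ to give (after an integration by parts) a term of the form $\frac12\int |\pa_r g|^2\,\pa_r(\text{something}(r)\pa_r Q)$ — producing the $D_{-2}\pa_r Q = (\pa_r - 2r^{-1})\pa_r Q$ weight — while the $-\frac{l(l+1)}{r^2}g$ piece, paired with $\pa_r g\,\pa_r Q$, integrates by parts to give the $-\frac{l(l+1)}{2}\int |g|^2 r^{-2}\pa_r^2 Q$ term. The term $-\na\Dein Q\cdot\na f = -\pa_r\Dein Q\,\pa_r f_{lm}$ should similarly be rewritten: since $\Dein Q$ is radial with $\pa_r\Dein Q = \Delta_0^{-1}$-type expression, one checks it is consistent with the $\pa_r\Delta_l^{-1}f_{lm}$ formulation, or more simply it is absorbed by noting $\na\Dein Q$ appears already in \eqref{eqdefcalLl} and tracking it directly through integration by parts. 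I would double-check the bookkeeping by testing on a radial $f$ ($l=0$) against the known formula in \cite{MR4685953}.

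The main obstacle I anticipate is the careful integration by parts in the two transport terms — specifically, correctly routing the second-derivative terms so that no spurious first-order terms survive and the weights come out as the clean combinations $(\pa_r - 2r^{-1})\pa_r Q$ and $r^{-2}\pa_r^2 Q$. One must be attentive that $f \in H^\infty \cap \calH_{(l)}$ guarantees enough decay and regularity (in particular $\Delta_l^{-1}f_{lm}$ and its derivatives decay appropriately, using that $l \ge 1$ kills the slowly-decaying homogeneous mode) so that all boundary terms at $r=0$ and $r=\infty$ vanish; for $l \ge 3$ (indeed $l \ge 1$) this is fine, but it should be stated. Once \eqref{eqquacalLl} is established, the coercivity \eqref{Llposivwith3} for $l\ge 3$ follows in subsequent lemmas by bounding the three "bad" terms $-\frac32(Qf_{lm},f_{lm})$, the $\pa_r Q$-weighted term, and the $\pa_r^2 Q$-weighted term against the positive $l(l+1)\|r^{-1}f_{lm}\|_{L^2}^2$ using the explicit decay of $Q$ and Hardy-type inequalities — but that is the content of the next steps, not of this lemma.
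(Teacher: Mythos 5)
Your proposal is correct and follows essentially the same route as the paper: term-by-term integration by parts, with the scaling term contributing $\tfrac14\|f_{lm}\|^2$ via skew-symmetry of $y\cdot\nabla+\tfrac32$, the term $-\nabla\Delta^{-1}Q\cdot\nabla f$ yielding $+\tfrac12(Qf,f)$ (hence the $-\tfrac32$ coefficient), and the key substitution $f_{lm}=\Delta_l(\Delta_l^{-1}f_{lm})$ followed by integration by parts producing the two weighted terms $(\pa_r-2r^{-1})\pa_rQ$ and $r^{-2}\pa_r^2Q$. The only place you are slightly vaguer than the paper is the first transport term, but "integrate by parts and use $\Delta\Delta^{-1}Q=Q$" is exactly what the paper does there.
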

\begin{proof}
    Recall $\Lambda = \Lambda_0 + \frac 12$ with $\Lambda_0$ skew-symmetric on $L^2$. The first four terms of $\calL f$ in \eqref{eqdefcalL} has the following contributions:
\begin{align*}
    &  \Re (-\Delta f, f)_{L^2  } =  \| \nabla f \|_{L^2 }^2, \qquad  \frac 12 \Re (\Lambda_0 f, f)_{L^2 } =0, \\
    & \Re \left(\left(\frac 14 - 2 Q\right)f, f\right)_{L^2 } = \frac 14 \| f \|_{L^2 }^2 - 2 (Q f , f)_{L^2 }, \\
    &  \Re (-\nabla \Delta^{-1} Q \cdot \nabla f, f)_{L^2 } =-\frac 12 \int \nabla |f|^2 \cdot    \nabla \Delta^{-1} Q dx= 
    \frac 12 (Qf, f)_{L^2 }.
\end{align*}
Furthermore, 
\begin{align}
\| \na (f_{lm} Y_{lm}) \|_{L^2(\RR^3)}^2
& = \left(-\Delta (f_{lm} Y_{lm}), f_{lm} Y_{lm} 
\right)_{L^2}    
= (-\Delta_l f_{lm} Y_{lm}, f_{lm} Y_{lm})_{L^2} \notag \\
& = \int_0^\infty \left(-\partial_r^2 - \frac{2}{r} \partial_r f_{lm} + \frac{l(l+1)}{r^2} f_{lm} \right) f_{lm} r^2 dr \notag\\
& = \int_0^\infty |\partial_r f_{lm}|^2 r^2 dr + l(l+1) \int |f_{lm}|^2 dr \notag\\
& = \frac{1}{4 \pi} \| \partial_r f_{lm} \|_{L^2(\RR^3)}^2 + \frac{l(l+1)}{4 \pi} \| \frac{f_{lm}}{r} \|_{L^2(\RR^3)}^2,
\label{nablaflm}
\end{align}
and
\begin{align*}
    \| f\|_{L^2}^2  = \frac{1}{4 \pi} \| f_{lm} \|_{L^2(\RR^3)}^2 \text{ and }
    (Qf,f)_{L^2} = \frac{1}{4 \pi}(Qf_{lm}, f_{lm})_{L^2(\RR^3)}.
\end{align*}

For the last term in \eqref{eqdefcalL}, we denote $w = \Delta^{-1} f$ so $w = w_{lm} Y_{lm}$ with $w_{lm} = \Delta_l^{-1} f_{lm}$, then
\begin{align*}
    & \quad \Re (-\nabla Q \cdot \nabla \Delta^{-1} f, f)_{L^2  }
     = -\Re (\na Q \cdot \na w, \Delta w)_{L^2}
    = -\Re \left(\partial_r Q \partial_r w_{lm} Y_{lm},    \Delta_l w_{lm} Y_{lm} \right)_{L^2} \\
    & = -\int_0^\infty \partial_r Q \cdot \partial_r w_{lm} \cdot  \Delta_l \bar{w}_{lm} \cdot r^2    \left( \int_{\mathbb{S}^2} Y_{lm}(\Omega) \bar{Y}_{lm}(\Omega)  d\Omega \right) dr \\
    & = -\int_0^\infty \partial_r Q \cdot \partial_r w_{lm} \cdot  \left( \partial_r^2 \bar w_{lm} + \frac{2}{r} \partial_r \bar w_{lm} - \frac{l(l+1)}{r^2} \bar w_{lm} \right) r^2     dr \\
    & = -\int_0^\infty \left[ \frac{1}{2} \partial_r| 
    \partial_r w_{lm}|^2+ \frac{2}{r}|\partial_r w_{lm}|^2 -  \frac{l(l+1)}{2r^2} \partial_r|w_{lm}|^2 \right] \partial_r Q \cdot    r^2 dr \\
    & =  \frac{1}{2}\int_0^\infty |\partial_r w_{lm}|^2 [ ( \partial_r - 2r^{-1} ) (   \partial_r Q)] r^2 dr -  \frac{l(l+1)}{2} \int_0^\infty  |w_{lm}|^2 r^{-2} \partial_r(   \partial_r Q) r^2 dr \\
    & = \frac{1}{4 \pi} \cdot \frac{1}{2}\int_{\RR^3}|\partial_r w_{lm}|^2 [ ( \partial_r - 2r^{-1} ) (   \partial_r Q)] dx - 
    \frac{1}{4 \pi} \cdot \frac{l(l+1)}{2} 
    \int_{\RR^3}  |w_{lm}|^2 r^{-2} \partial_r(   \partial_r Q)  dx
\end{align*}
Summing up these contributions concludes \eqref{eqquacalLl}.
\end{proof}

Next, we provide some pointwise estimates of some related functions, which will help us to explore more about the right hand side of \eqref{eqquacalLl}.
\begin{lemma} The profile $Q$ given by \eqref{profile} satisfies 
   \be
     Q \le \frac 92 r^{-2}, \quad \partial_r^2 Q \le \frac{136}{3}(2+r^2)^{-2},\quad \left(\pa_r - \frac 2r\right) \pa_r Q > 0.\label{eqnumQ}
   \ee
\end{lemma}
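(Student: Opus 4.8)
The plan is a completely elementary computation using the explicit formula $Q(r) = 4(6+r^2)(2+r^2)^{-2}$. First I would differentiate once and twice and put everything over a common power of $2+r^2$, obtaining the closed forms
\[
\pa_r Q = -\frac{8r(10+r^2)}{(2+r^2)^3}, \qquad \pa_r^2 Q = \frac{8(3r^4+44r^2-20)}{(2+r^2)^4}.
\]
Both are rational functions of $r$ whose only denominator is a positive power of $2+r^2$, so after clearing denominators each of the three claimed inequalities becomes a polynomial inequality in $s=r^2\ge 0$.

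I would then treat the three assertions in turn. For $Q\le \tfrac92 r^{-2}$, multiplying through by $2r^2(2+r^2)^2>0$ reduces the claim to $9(2+r^2)^2-8r^2(6+r^2)\ge 0$, and expanding gives exactly $(r^2-6)^2\ge 0$. For the bound on $\pa_r^2 Q$, multiplying by $3(2+r^2)^4>0$ reduces the claim to $136(2+r^2)^2-24(3r^4+44r^2-20)\ge 0$, which expands to $64(r^2-4)^2\ge 0$. For the last inequality I would write $\pa_r^2 Q-\tfrac2r\pa_r Q$ over the common denominator $(2+r^2)^4$; its numerator is
\[
8(3r^4+44r^2-20)+16(10+r^2)(2+r^2)=8(5r^4+68r^2+20),
\]
which is manifestly strictly positive for every $r\ge 0$, giving $(\pa_r-\tfrac2r)\pa_r Q>0$.

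Thus the whole lemma reduces to recognizing the three nonnegative polynomials $(r^2-6)^2$, $64(r^2-4)^2$, and $8(5r^4+68r^2+20)$. There is no analytic obstacle here; the only thing to watch is the sign bookkeeping in the two differentiations and when clearing denominators. I would also record that the first two bounds are in fact sharp, with equality at $r=\sqrt6$ and $r=2$ respectively — this is what pins down the particular constants $\tfrac92$ and $\tfrac{136}{3}$ — and that the sign $(\pa_r-\tfrac2r)\pa_r Q>0$ is precisely the positivity of the coefficient of $|\pa_r\Delta_l^{-1}f_{lm}|^2$ appearing in \eqref{eqquacalLl}, which is why these three facts are isolated here before the coercivity estimate of Proposition \ref{propmodstabl3}.
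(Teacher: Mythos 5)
Your proposal is correct and follows essentially the same route as the paper: compute $\pa_r Q$, $\pa_r^2 Q$ and $(\pa_r-\tfrac2r)\pa_r Q$ explicitly, then verify the two upper bounds by maximizing $Qr^2$ and $\pa_r^2 Q\cdot(2+r^2)^2$ (your perfect-square identities $(r^2-6)^2$ and $64(r^2-4)^2$ make this maximum computation explicit) and read off the positivity of the last expression from its numerator $8(5r^4+68r^2+20)$. All three closed forms match the paper's.
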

\begin{proof} Compute
\bee
 Q' = -8r\frac{r^2 + 10}{(2+r^2)^3},\quad \partial_r^2 Q = 8\frac{3r^4 + 44r^2 - 20}{(2+r^2)^4}, \quad \left(\pa_r - \frac 2r\right) \pa_r Q = 8\frac{5r^4 + 68r^2 + 20}{(2+r^2)^4}.
\eee
This implies the positivity of $(\pa_r - \frac 2r)\pa_r Q$. The upper bounds of $Q$, $Q''$ follow a computation of maximum for $Q r^2$ and $\partial_r^2 Q \cdot (2+r^2)^2$.  
\end{proof}

In addition, we will prove the quantitative interpolation estimate to bound the nonlocal term, thanks to the explicit form of $\Delta_l^{-1}$ given by \eqref{eqdefDeltal-1}.

\begin{lemma} For $l \ge 2$, $-l \le m \le l$ and $R > 0$, let $f(x) = f_{lm}(r) Y_{lm}(\Omega) \in H^\infty(\RR^3) \cap \calH_{(l)}$, we have 
\be
\left\| \frac{\Delta_l^{-1} f_{lm} }{r(2+r^2)}\right\|_{L^2}^2 \le \frac{4\a(R)}{(2l+1)^2(2l-3)}\left\| \frac{f_{lm}}{r} \right\|_{L^2}^2 + \frac{4\beta(R) }{(2l+1)^2 (2l-1)} \| f_{lm} \|_{L^2}^2,
\label{eqnonlocalestintp}
\ee
where 
\be
\begin{split}
 \a(R)& = \int_0^R \frac{r^3}{(2+r^2)^2} dr = \left(\frac{1}{R^2 + 2} + \frac{\log(R^2 + 2)}{2}\right) - \left(\frac 12 + \frac{\log 2}{2}\right),\\
 \beta(R) &= \int_R^\infty \frac{r}{(2+r^2)^2} dr =  \frac{1}{2(R^2 + 2)}.
 \end{split}\label{eqdefalphabeta}
\ee
\end{lemma}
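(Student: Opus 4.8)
The estimate \eqref{eqnonlocalestintp} is a weighted $L^2$ bound on the nonlocal term $\Delta_l^{-1} f_{lm}$, so the natural strategy is to write $\Delta_l^{-1}$ explicitly via \eqref{eqdefDeltal-1} and split the integral operator at the threshold $r=R$. Concretely, for fixed $r$ we have
\[
\Delta_l^{-1} f_{lm}(r) = -\frac{1}{2l+1}\left( r^{-l-1}\int_0^r f_{lm}(r') r'^{l+2}\, dr' + r^{l}\int_r^\infty f_{lm}(r') r'^{1-l}\, dr' \right),
\]
using $r_< = r'$, $r_> = r$ on the first piece and the reverse on the second. The plan is to bound the two pieces of $|\Delta_l^{-1} f_{lm}(r)|$ separately by Cauchy-Schwarz, pairing the kernel against a power of $r'$ tuned so that one factor reproduces either $\|f_{lm}/r\|_{L^2}^2 = \int |f_{lm}|^2 dr$ (recall the $r^2\,dr$ measure) or $\|f_{lm}\|_{L^2}^2 = \int |f_{lm}|^2 r^2\, dr$, and the other factor is an \emph{explicitly integrable} power of $r'$. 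The exponents $2l-3$ and $2l-1$ in the denominators, together with the constraint $l \ge 2$ (so both are positive), are exactly the traces of these kernel integrals: $\int_0^r (r'/r)^{2l+2} r'^{-2}\,dr' \sim r^{-1}/(2l-1)$ and $\int_r^\infty (r/r')^{2l} r'^{2}\, \frac{dr'}{r'^{2}} \cdot(\cdots)$, with the precise matching dictating whether the $\int_0^R$ or $\int_R^\infty$ weight $\frac{r^3}{(2+r^2)^2}$ resp. $\frac{r}{(2+r^2)^2}$ appears.

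The key steps, in order: (i) For $r \le R$, estimate the ``inner'' piece $r^{-l-1}\int_0^r f_{lm} r'^{l+2} dr'$ using Cauchy-Schwarz against the measure $r'^2 dr'$ to pull out $\|f_{lm}\|_{L^2(0,r)}$; and estimate the ``outer'' piece against $\int_r^\infty$ similarly; then integrate $|\Delta_l^{-1}f_{lm}(r)|^2 \cdot \frac{1}{r^2(2+r^2)^2}$ over $0 < r < R$, Fubini to expose the weight $\alpha(R) = \int_0^R \frac{r^3}{(2+r^2)^2}dr$. (ii) For $r \ge R$, do the analogous split but now the weight $\frac{r}{(2+r^2)^2}$ over $(R,\infty)$ produces $\beta(R)$. (iii) Collect the two regions and the two source norms, arriving at the stated constants $\frac{4\alpha(R)}{(2l+1)^2(2l-3)}$ and $\frac{4\beta(R)}{(2l+1)^2(2l-1)}$; the closed forms in \eqref{eqdefalphabeta} are elementary antiderivatives. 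One should also note that monotonicity of the truncated norms in $r$ can be used to replace $\|f_{lm}\|_{L^2(0,r)}$ or $\|f_{lm}\|_{L^2(r,\infty)}$ by the full $\|f_{lm}\|_{L^2}$ where convenient, which is what makes the Fubini step decouple cleanly into (weight integral) $\times$ (full norm).

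\textbf{Main obstacle.} The routine part is the Cauchy-Schwarz and the antiderivatives; the delicate bookkeeping is choosing, for each of the four kernel-times-region combinations, the exact splitting of powers of $r$ and $r'$ so that (a) the leftover $r'$-integral converges — this is where $l \ge 2$, hence $2l-3 \ge 1 > 0$ and $2l-1 \ge 3 > 0$, is essential — and (b) the residual $r$-weight is precisely $\frac{r^3}{(2+r^2)^2}$ on $(0,R)$ or $\frac{r}{(2+r^2)^2}$ on $(R,\infty)$ rather than some larger majorant, so that the constants are sharp enough for the later $l=2$ GGMT argument. I expect the cross terms (inner piece of $\Delta_l^{-1}$ on the region $r \ge R$, and outer piece on $r \le R$) to require the most care, since there one must decide whether to absorb them into the $\alpha$ or the $\beta$ term; a clean way is to first prove the pointwise bound $|\Delta_l^{-1}f_{lm}(r)|^2 \le \frac{2}{(2l+1)^2}\big( \text{inner}^2 + \text{outer}^2\big)$ and then handle the four integrals independently, using in each the monotone full-norm replacement to factor out the weight.
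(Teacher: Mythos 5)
Your plan is essentially the paper's proof: write out the kernel \eqref{eqdefDeltal-1}, apply Cauchy--Schwarz to both the inner and outer pieces with the pairing tuned to the target norm, obtain the two \emph{global} pointwise bounds $|\Delta_l^{-1}f_{lm}(r)|\le \frac{2r^{3/2}}{(2l+1)\sqrt{2l-3}}(4\pi)^{-1/2}\|f_{lm}/r\|_{L^2(\RR^3)}$ (this is where $l\ge 2$ enters) and $|\Delta_l^{-1}f_{lm}(r)|\le \frac{2r^{1/2}}{(2l+1)\sqrt{2l-1}}(4\pi)^{-1/2}\|f_{lm}\|_{L^2(\RR^3)}$, then use the first on $[0,R]$ and the second on $[R,\infty)$. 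One correction to your step (i): on $r\le R$ the Cauchy--Schwarz must be taken against $dr'$, pulling out $\int_0^\infty |f_{lm}|^2\,ds=(4\pi)^{-1}\|f_{lm}/r\|_{L^2(\RR^3)}^2$, not against $r'^2\,dr'$ --- the latter pairing is the one that produces the $\|f_{lm}\|_{L^2}$, $(2l-1)$, $r^{1/2}$ bound and belongs to the region $r\ge R$; executed as written, (i) would yield $\int_0^R \frac{r\,dr}{(2+r^2)^2}\,\|f_{lm}\|_{L^2}^2$ rather than $\alpha(R)\|f_{lm}/r\|_{L^2}^2$. Finally, no Fubini and no cross-term bookkeeping are needed: on each region both kernel pieces are estimated with the same pairing (the two reciprocal square roots $(2l+5)^{-1/2}+(2l-3)^{-1/2}\le 2(2l-3)^{-1/2}$ account for the factor $4$ after squaring), and once the truncated norms are replaced by the full ones the weight integral simply factors out.
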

\begin{proof}
    Using \eqref{eqdefDeltal-1} together with Cauchy-Schwarz inequality, we have
    \bee
    | \Delta_l^{-1}f_{lm}(r)| &\le& \frac{1}{2l+1} \Bigg[ r^{-(l+1)} \left(\int_0^r |f_{lm}|^2 ds\right)^\frac 12 \left( \int_0^r s^{2(2+l)} ds \right)^\frac 12\\
    && + r^{l} \left(\int_r^\infty |f_{lm}|^2 ds\right)^\frac 12 \left( \int_r^\infty s^{2(1-l)} ds \right)^\frac 12 \Bigg] \\
    &\le& \frac{1}{2l+1} \left( \int_0^\infty |f_{lm}|^2 ds \right)^\frac 12\cdot  r^\frac 32\left( (5+2l)^{-\frac 12} + (2l-3)^{-\frac 12} \right)  \\
    &\le& \frac{2r^{\frac 32}}{(2l+1)(2l-3)^\frac 12} \cdot (4\pi)^{-\frac 12} 
\left\| \frac{f_{lm}}{r} \right\|_{L^2(\RR^3)},
    \eee
    where $l\ge 2$ guarantees the integrability. Similarly, for $l \ge 1$, we can obtain that
    \bee
    |\Delta_l^{-1} f_{lm}(r)| &\le& \frac{2r^\frac 12}{(2l+1)(2l-1)^\frac 12} \cdot (4\pi)^{-\frac 12} \| f_{lm} \|_{L^2}^2.
    \eee
    Therefore, \eqref{eqnonlocalestintp} follows an application of the above two pointwise estimates on $r \in [0, R]$ and $r \in [R, \infty)$ respectively.
\end{proof}

We finally prove Proposition \ref{propmodstabl3} with the preliminaries above.
\begin{proof}[Proof of Proposition \ref{propmodstabl3}]
  Since $\calH_{(l)} = \bigoplus_{m=-l}^{l} L^2(\RR^2,r^2dr) \bigotimes \calY_{lm}$, 
  using the orthogonal condition $(Y_{lm},Y_{l'm'})_{L^2(\mathbb{S}^2)} = \delta_{l=l',m=m'}$ together with Lemma \ref{thmLlresonHl}, it suffices to prove \eqref{Llposivwith3} with $f(x) = f_{lm}(r) Y_{lm}(\Omega)$.
  In fact, plugging the numerology \eqref{eqnumQ} and Hardy's inequality $\| r^{-1}f \|_{L^2(\RR^3)} \le 2\| \nabla f \|_{L^2(\RR^3)}$ (see \cite[Theorem $1.72$]{MR2768550}) into \eqref{eqquacalLl}, we find 
  \bee
  4 \pi \Re (\calL f, f)_{L^2} \ge \left(l(l+1) - \frac{13}{2} \right)\left\| \frac{f_{lm}}{r} \right\|_{L^2}^2 + \frac 14 \| f_{lm}\|_{L^2}^2 - \frac{68 l(l+1)}{3}\left\| \frac{\Delta_l^{-1} f_{lm} }{r(2+r^2)}\right\|_{L^2}^2.
  \eee
  Now fix $R = 4$, we evaluate the values in \eqref{eqdefalphabeta} that
  \[ \a(4) < \frac 23,\quad \beta(4) = \frac{1}{36}. \]
  Then for $l \ge 3$, from \eqref{eqquacalLl}, \eqref{eqnonlocalestintp} and \eqref{eqdefalphabeta}, we obtain that
  \bee
   4 \pi \Re (\calL f, f)_{L^2} &\ge& \left( l(l+1) - \frac{13}{2} - \frac{68l(l+1)}{3}\cdot\frac{4 \cdot \frac 23}{(2l+1)^2(2l-3)} \right) \left\| \frac{f_{lm}}{r} \right\|_{L^2}^2 \\
   &+&\left( \frac 14 -\frac{68l(l+1)}{3}\cdot \frac{4 \cdot \frac 1{36}}{(2l+1)^2(2l-1)} \right) \| f_{lm} \|_{L^2}^2 \\
   &\ge& l(l+1)\left(1 - \frac{13}{24} - \frac{68\cdot 4 \cdot \frac 23}{3 \cdot 7^2 \cdot 3} \right)  \left\| \frac{f_{lm}}{r} \right\|_{L^2}^2 + \left( \frac 14 - \frac{68 \cdot 12 \cdot 4 \cdot \frac{1}{36}}{3 \cdot 7^2 \cdot 5} \right) \| f_{lm} \|_{L^2}^2\\
   &=& \frac{499}{10584}  \left\| \frac{f_{lm}}{r} \right\|_{L^2}^2 + \left( \frac 18 + \frac{29}{17640} \right) \| f_{lm} \|_{L^2}^2 \ge \frac 18 \| f_{lm} \|_{L^2}^2 = \frac{\pi}{2} \| f\|_{L^2}^2,
  \eee
  where we exploited the monotonicity of $\frac 1{l(l+1)}$, $\frac 1{(2l+1)^2(2l-3)}$ and $\frac{l(l+1)}{(2l+1)^2(2l-3)}$ for $l \ge 3$. 
\end{proof}

 \subsection{Mode stability for $l = 2$} 
 \label{subsecl2}
 For the case $l=2$, although the non-existence of unstable eigenvalue is still expected, the direct coercivity in $L^2(\RR^3)$ as \eqref{Llposivwith3} seems hard to prove due to worse coercivity of $-\Delta_l$. We perform a refined analysis through partial localization, conjugation by some suitable function and symmetrization to find a positive Schr\"{o}dinger type operator which is bounded from below via GGMT estimate.

Firstly, motivated by the partial-mass localization in $l=0$ case, or rather, the decomposition formula \eqref{eqDeltalDl}, we introduce $D_{l+2}$ and $D_{l+2}^{-1}$ to partially localize the nonlocal operator $\calL_l$ and gain more coercivity. 
\begin{lemma} As for $l \ge 0$ and $\calL_l$ given in \eqref{eqdefcalLl}, we have 
    \be 
    D_{l+2}^{-1} \calL_l D_{l+2} = -\pa_r^2 - \frac 2r \pa_r + \frac{(l+1)(l+2)}{r^2} + \frac 12 (r \pa_r + 1) - D_2^{-1} Q\cdot D_2 - Q + l T_l,
    \label{eqlocalcalLl1}
    \ee
    where $T_l$ is a non-local operator defined by 
    \be 
     T_l f =  D_{l+2}^{-1} \left( V_1 f + V_2  D_{-l}^{-1} f \right),
     \label{Tl: def}
     \ee
    with 
    \be 
V_1 = -\pa_r \left( r^{-1} D_2^{-1} Q\right) = \frac{8r}{(r^2 + 2)^2},\quad  V_2 =  -r^{-1} \pa_r Q = \frac{8(r^2 + 10)}{(r^2 + 2)^3}. \label{eqdefV1V2}
    \ee
\end{lemma}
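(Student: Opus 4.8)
The identity \eqref{eqlocalcalLl1} is a direct computation, conjugating each term of $\calL_l$ in \eqref{eqdefcalLl} by $D_{l+2}$ on the right and $D_{l+2}^{-1}$ on the left; the point is to track how the first-order operators $D_k$ interact. The plan is to treat the five terms of $\calL_l$ one at a time. For the principal part, I would use $\Delta_l = D_{l+2}D_{-l}$ from \eqref{eqDeltalDl}, so that $D_{l+2}^{-1}(-\Delta_l)D_{l+2} = -D_{-l}D_{l+2}$; then one checks by the commutator identity $D_a D_b = \pa_r^2 + \frac{a+b}{r}\pa_r + \frac{ab - b}{r^2}$ (which follows from $D_k = r^{-k}\pa_r r^k$) that $-D_{-l}D_{l+2} = -\pa_r^2 - \frac2r\pa_r + \frac{(l+1)(l+2)}{r^2}$, giving the first three terms on the right of \eqref{eqlocalcalLl1}. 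For the scaling term $\frac12\Lambda = \frac12(r\pa_r + 2)$, one computes $D_{l+2}^{-1}(r\pa_r)D_{l+2}$; since $D_{l+2}$ commutes with $r\pa_r$ only up to lower order — precisely $[r\pa_r, D_{l+2}] = -D_{l+2}$ — conjugation shifts $r\pa_r + 2$ to $r\pa_r + 1$, producing the $\frac12(r\pa_r + 1)$ term. The purely multiplicative term $-2Qf$ splits: $D_{l+2}^{-1}(-2Q)D_{l+2}$ is not multiplicative, but writing $-2Q = (-D_2^{-1}Q\cdot D_2) + (\text{remainder})$ via the same algebra used for $-\nabla\Delta^{-1}Q\cdot\nabla$ below will reorganize the local drift terms into exactly $-D_2^{-1}Q\cdot D_2 - Q$.

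The two genuinely nonlocal terms $-\pa_r\Delta^{-1}Q\cdot\pa_r f$ and $-\pa_r Q\cdot\pa_r\Delta_l^{-1}f$ are where the operator $lT_l$ emerges. Here I would use \eqref{eqDeltalintop1}--\eqref{eqDeltalintop2} to express $\pa_r\Delta_l^{-1}$ and $\Delta_l^{-1}\pa_r$ as affine-in-$l$ combinations of $D_{l+2}^{-1}$, $D_{-(l-1)}^{-1}$, $D_{l+1}^{-1}$, $D_{-l}^{-1}$. The key observation is that the coefficient of the local piece $D_{l+2}^{-1}$ (which, after left-multiplication by $D_{l+2}^{-1}$, stays under control and combines with the $l$-independent profile drift) must assemble — after using the profile equation \eqref{eqprofileQ} restricted to radial functions, i.e. $-\Delta_0 Q + \frac12\Lambda Q - Q^2 - \pa_r Q\,\pa_r\Delta_0^{-1}Q = 0$, equivalently $\pa_r\Delta_0^{-1}Q = D_2^{-1}Q$ so that $\pa_r(r^{-1}D_2^{-1}Q)$ is exactly $-V_1$ and $-r^{-1}\pa_r Q = V_2$ as in \eqref{eqdefV1V2} — into the $l$-independent terms $-D_2^{-1}Q\cdot D_2 - Q$, leaving precisely $l$ times the nonlocal remainder $D_{l+2}^{-1}(V_1 f + V_2 D_{-l}^{-1}f)$. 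I would verify the explicit formulas $V_1 = 8r/(r^2+2)^2$ and $V_2 = 8(r^2+10)/(r^2+2)^3$ by plugging in $Q$ from \eqref{profile}: $D_2^{-1}Q = r^{-2}\int_0^r Q s^2\,ds$ and $\pa_r Q = -8r(r^2+10)/(r^2+2)^3$ from the earlier numerology.

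The main obstacle, and the step deserving the most care, is the bookkeeping in isolating the $l$-linear part: one must confirm that after conjugation \emph{all} local terms of $\calL_l$ contribute $l$-independently (the $\frac{l(l+1)}{r^2}$ shifting to $\frac{(l+1)(l+2)}{r^2}$ is the only place $l$ survives locally, and it is absorbed into the stated principal part), while the nonlocal terms contribute a part independent of $l$ — which must be exactly $-D_2^{-1}Q\cdot D_2 - Q$ after invoking \eqref{eqprofileQ} — plus a remainder proportional to $l$. This is delicate because $\pa_r\Delta_l^{-1}$ and $\Delta_l^{-1}\pa_r$ each carry both an $(l+1)$- or $(l+2)$-weighted and an $l$- or $(l-1)$-weighted piece, so several partial fractions in $l$ must cancel; the cleanest route is to compute the whole conjugated expression at general $l$, subtract its value at $l = 0$ (which by construction of $\Qs = Q$ as a steady state must reproduce the $l=0$ localized operator $-\pa_r^2 - \frac2r\pa_r + \frac{2}{r^2} + \frac12(r\pa_r+1) - D_2^{-1}Q\cdot D_2 - Q$), and check that the difference is linear in $l$ with the claimed coefficient $T_l$. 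Once the algebra is organized this way, \eqref{eqlocalcalLl1}--\eqref{eqdefV1V2} follow, and the remaining verification that $V_1, V_2$ have the stated closed forms is the routine computation flagged above.
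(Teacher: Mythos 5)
Your plan is essentially the paper's proof: conjugate term by term, using $\Delta_l = D_{l+2}D_{-l}$ for the principal part, the commutator $[r\pa_r, D_{l+2}] = -D_{l+2}$ for the scaling term, and the identity $Qg + D_2^{-1}Q\,\pa_r g = D_2(D_2^{-1}Q\cdot g)$ to fuse one copy of $Q$ with the drift $-\pa_r\Delta^{-1}Q\cdot\pa_r$ into $-D_2^{-1}Q\cdot D_2$ modulo an $l$-proportional remainder. Two remarks on where the paper is cleaner than your outline. First, for the last nonlocal term the paper does not go through \eqref{eqDeltalintop1}--\eqref{eqDeltalintop2}: it uses $\Delta_l^{-1}D_{l+2} = D_{-l}^{-1}$ directly (from \eqref{eqDeltalDl}) and then $\pa_r D_{-l}^{-1} = I + \tfrac{l}{r}D_{-l}^{-1}$, which immediately produces $-D_{l+2}^{-1}\pa_r Q + lD_{l+2}^{-1}(V_2 D_{-l}^{-1}\cdot)$; the residual $-D_{l+2}^{-1}(\pa_r Q\,\cdot)$ then cancels exactly against the $+D_{l+2}^{-1}(\pa_r Q\,\cdot)$ coming from conjugating the leftover multiplicative $-Q$. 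This cancellation is the one genuinely non-obvious step and your write-up should exhibit it explicitly rather than leave it inside ``will reorganize.''

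Two inaccuracies to correct. (i) The profile equation \eqref{eqprofileQ} plays no role in this lemma. The identity $\pa_r\Delta^{-1}Q = D_2^{-1}Q$ that you need is \emph{not} equivalent to the profile equation; it is the general formula \eqref{eqDeltalintop1} at $l=0$, valid for any radial function. The profile equation only enters later, in the wave-operator construction. (ii) Your proposed bookkeeping device --- compute the conjugated operator, subtract its value at $l=0$, and ``check the difference is linear in $l$'' --- is not well posed: the conjugating operators $D_{l+2}^{\pm 1}$ and the remainder $T_l$ (through $D_{l+2}^{-1}$ and $D_{-l}^{-1}$) themselves depend on $l$, so the operator is not an affine function of $l$ and the $l=0$ specialization is a different conjugation altogether. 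Neither issue is fatal, since the term-by-term computation you also describe is the actual proof and does go through, but as written these two claims would need to be removed or repaired.
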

\begin{proof} We compute the conjugation of each term in $\calL_l$. From the definition of $D_k$ and $D_k^{-1}$ given \eqref{Dkdef} and \eqref{eqdefDk-1}, 
\bee
 D_{l+2}^{-1} \circ \left(-\frac 12 \Lambda \right)D_{l+2} &=& -\frac 12 \Lambda -\frac 12 D_{l+2}^{-1} [\Lambda, D_{l+2}] = -\frac 12 (\Lambda - 1),  \\
 D_{l+2}^{-1} \circ (-Q) D_{l+2} &=& -Q - D_{l+2}^{-1} [Q, D_{l+2}] = -Q + D_{l+2}^{-1}\circ \pa_r Q,
\eee
and 
\bee
 &&D_{l+2}^{-1} \circ \left( -Q - \pa_r \Delta^{-1} Q \cdot \pa_r \right) D_{l+2}f = -D_{l+2}^{-1} \left( D_2 \left( D_2^{-1} Q \cdot D_{l+2} f \right) \right) \\
 &=& -D_2^{-1} Q \cdot D_{l+2} f + D_{l+2}^{-1} \left( \frac lr D_2^{-1} Q \cdot D_{l+2} f \right) \\
 &=& -D_2^{-1} Q \cdot D_{l+2} f + \frac lr D_2^{-1} Q \cdot f - D_{l+2}^{-1}\left( \pa_r \left( \frac lr D_2^{-1} Q \right) f \right) \\
 &=& -D_2^{-1} Q \cdot D_2 f - D_{l+2}^{-1}\left( \pa_r \left( \frac lr D_2^{-1} Q \right) f \right).
\eee

Further invoking \eqref{eqDeltalDl}, we have 
\bee -D_{l+2}^{-1}\Delta_l D_{l+2} &=& -D_{-l} D_{l+2} = -\pa_r^2 - \frac 2r \pa_r + \frac{(l+1)(l+2)}{r^2}, \\
-D_{l+2}^{-1} \circ \left( \pa_r Q \cdot \pa_r\Delta_l^{-1} D_{l+2}\right)  &=& -D_{l+2}^{-1} \circ \left( \pa_r Q \cdot  \pa_r D_{-l}^{-1} \right)\\
&=& -D_{l+2}^{-1} \circ \left(\pa_r Q  + \pa_r Q \cdot \frac lr D_{-l}^{-1}\right).
\eee
Summing up these 5 terms yields \eqref{eqlocalcalLl1}, where $D_{l+2}^{-1} \circ \pa_r Q$ terms are cancelled. 
\end{proof}

Next, as for $D_{l+2}^{-1} \calL_l D_{l+2}$ given above, we further take the conjugation by $r^\alpha$, symmetrize the related operator and estimate the nonlocality, so as to find a suitable Schr\"{o}dinger operator to bound from below. 

\begin{lemma}\label{lemsymestnon}
    For $l \ge 0$ and $\a \in [-l,l+\frac{1}{2})$, we define the operator
     \be 
  \begin{split}
  \tilde \calL_{l,\a} := r^\a D_{l+2}^{-1} \calL_l D_{l+2} r^{-\a} = &-\pa_r^2 - \frac{2 -2\a}{r}\pa_r  + { \frac{\a-\a^2 + (l+1)(l+2)}{r^2}} \\
 &  + \frac 12 \left( r\pa_r + 1 -\a \right) - D_2^{-1} Q D_{2-\a} - Q + l r^{\a} T_l r^{-\a},
   \end{split}
   \label{tildeLlapha}
  \ee
  with domain $C_c^\infty(\RR^+)$. Then for any chosen function $W$ satisfying
  \be W(r) > 0 \quad \text{ with } \quad W(r) \gtrsim \la r \ra^{-\min \{ 2l+2\a -1, 2 \} + \epsilon} \text{ for some } \epsilon>0,
  \label{eqcondW} \ee
  we have
  \be
  \Re (\tilde \calL_{l,\a} f,f)_{L^2(\RR^+)} \ge (\calH_{l,\a; W} f,f)_{L^2(\RR^+)}, \quad \forall f \; \in C_c^\infty(\RR^+),
  \label{eqlowbddSchop}
  \ee
  where $\calH_{l,\a; W}$ is a Schr\"{o}dinger type operator defined by
  \be \calH_{l, \a; W} = - \pa_r^2 +  \frac{L_{l, \a}}{r^2} + \frac{1 - 2\a}{4} + \frac 12 D_{2\a - 4} D_2^{-1} Q - Q - l \mu_{l, \a}[W^{-1}] W,
 \label{loweroper2}
 \ee  
 with $L_{l, \a} = { -(\a-1)^2} + (l+1)(l+2)$, and the linear functional $\mu_{l,\a}$  given by \eqref{eqdefmula}. 
\end{lemma}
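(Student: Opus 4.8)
The plan is to prove Lemma~\ref{lemsymestnon} in three moves: first establish the explicit form \eqref{tildeLlapha} for $\tilde\calL_{l,\a}$, then symmetrize its local part by a change of measure, and finally absorb the remaining first–order and nonlocal pieces into a lower bound by a Schr\"odinger operator with the help of a Young-type inequality and the free parameter function $W$.

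\textbf{Step 1: the conjugated operator.} Starting from \eqref{eqlocalcalLl1}, I would conjugate each summand by $r^\a$. Since $r^\a(-\pa_r^2 - \tfrac2r\pa_r + \tfrac{(l+1)(l+2)}{r^2})r^{-\a}$ is a direct computation using $\pa_r r^{-\a} = r^{-\a}(\pa_r - \a r^{-1})$, one gets $-\pa_r^2 - \tfrac{2-2\a}{r}\pa_r + \tfrac{\a-\a^2+(l+1)(l+2)}{r^2}$; the scaling term $\tfrac12(r\pa_r+1)$ becomes $\tfrac12(r\pa_r+1-\a)$ because $r^\a (r\pa_r) r^{-\a} = r\pa_r - \a$; the term $-D_2^{-1}Q\cdot D_2$ becomes $-D_2^{-1}Q\,D_{2-\a}$ after noting $r^\a D_{l+2}^{-1} = D_{l+2+\a}^{-1}r^\a$ is \emph{not} what we want—rather, since $D_2^{-1}Q$ is just a multiplication operator we only conjugate the $D_2$ on the right, $r^\a D_2 r^{-\a} = \pa_r + \tfrac{2-\a}{r} = D_{2-\a}$; the $-Q$ term is unchanged; and the nonlocal piece $lT_l$ becomes $l\,r^\a T_l r^{-\a}$ by definition. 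This gives \eqref{tildeLlapha} verbatim. The domain restriction $\a\in[-l,l+\tfrac12)$ is exactly what makes $D_{-l}^{-1}$ and $D_{l+2}^{-1}$ act on $C_c^\infty(\RR^+)$ with decay, and what keeps the $\tfrac12 r\pa_r$ term (which after symmetrization contributes $\tfrac{1-2\a}{4}$, matching the constant in \eqref{loweroper2}) bounded below.

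\textbf{Step 2: symmetrization of the local part.} The operator $\tilde\calL_{l,\a}$ is not symmetric on $L^2(\RR^+, dr)$ because of the $-\tfrac{2-2\a}{r}\pa_r$ and $\tfrac12 r\pa_r$ first–order terms. The standard trick is to write, for a suitable weight, $\tilde\calL_{l,\a} = g^{-1}\calS g$ where $\calS$ is symmetric; equivalently, taking real part of the inner product, integrate by parts to move half of each first–order derivative onto the test function and discard (or keep, as a non-negative term) the resulting perfect squares. Concretely, $\Re(-\tfrac{2-2\a}{r}\pa_r f, f)_{L^2(\RR^+)} = (2-2\a)\Re\int \tfrac{1}{r}\pa_r f \cdot \bar f\,dr$ — wait, more carefully one computes $\Re(-\tfrac{2-2\a}{r}\pa_r f, f) = \tfrac{2-2\a}{2}\int \tfrac{1}{r^2}|f|^2 dr$ after one integration by parts, which feeds into the $r^{-2}$ coefficient; and $\Re(\tfrac12 r\pa_r f, f)_{L^2} = -\tfrac14\int|f|^2 dr$, which combines with the $\tfrac{1-\a}{2}$ constant to yield the $\tfrac{1-2\a}{4}$ in $\calH_{l,\a;W}$. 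Collecting the $r^{-2}$ contributions from conjugation and from symmetrizing $-\tfrac{2-2\a}{r}\pa_r$ produces exactly $L_{l,\a} = -(\a-1)^2 + (l+1)(l+2)$. The $-D_2^{-1}Q\,D_{2-\a}$ term is handled by integration by parts to land on the self-adjoint form $\tfrac12 D_{2\a-4}D_2^{-1}Q$ stated in \eqref{loweroper2} — this is the one slightly delicate algebraic identity, obtained by writing $\Re(-D_2^{-1}Q\,D_{2-\a}f, f)_{L^2(\RR^+)}$, integrating by parts once, and symmetrizing.

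\textbf{Step 3: the nonlocal term.} This is where the freedom in $W$ enters and is the main obstacle. We must bound $\Re\,l\,(r^\a T_l r^{-\a}f, f)_{L^2(\RR^+)}$ from below by $-l\,\mu_{l,\a}[W^{-1}]\,(Wf,f)_{L^2(\RR^+)}$. The idea: $T_l$ as in \eqref{Tl: def} is $D_{l+2}^{-1}(V_1 f + V_2 D_{-l}^{-1}f)$, so after conjugation and taking the inner product with $f$, one integrates by parts to move $D_{l+2}^{-1}$ (an antiderivative) onto $\bar f$, turning it into something like $\int (V_1 f + V_2 D_{-l}^{-1}f)\cdot \overline{D_{l+2}^{-1-\cdots}f}$; then one applies Cauchy–Schwarz \emph{weighted by $W$}, i.e. $|\langle a, b\rangle| \le \tfrac12\int W^{-1}|a|^2 + \tfrac12\int W|b|^2$ or a sharper single-sided bound, pulling out a constant $\mu_{l,\a}[W^{-1}]$ — a linear functional of $W^{-1}$ defined by the integral kernels of $D_{l+2}^{-1}$, $D_{-l}^{-1}$ against $V_1, V_2$ — times $\int W|f|^2$. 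The condition \eqref{eqcondW}, $W \gtrsim \la r\ra^{-\min\{2l+2\a-1,2\}+\epsilon}$, is precisely the decay needed for $\mu_{l,\a}[W^{-1}] < \infty$: it ensures the relevant weighted kernel integrals converge at $r=\infty$ (where $V_1\sim r^{-3}$, $V_2\sim r^{-4}$ and the $D^{-1}$ operators contribute polynomial growth/decay). I expect the bookkeeping of these kernel integrals — getting the constant $\mu_{l,\a}$ in the exact form \eqref{eqdefmula} and verifying the convergence threshold matches \eqref{eqcondW} — to be the technically heaviest part; the positivity of $W$ then guarantees $(Wf,f)_{L^2(\RR^+)} > 0$ so the bound \eqref{eqlowbddSchop} is meaningful, and assembling Steps 1–3 gives the claimed inequality with $\calH_{l,\a;W}$ as in \eqref{loweroper2}.
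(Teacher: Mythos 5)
Your Steps 1 and 2 reproduce the paper's computation: the conjugation identities $r^\a D_k r^{-\a}=D_{k-\a}$, the symmetrization of the first--order terms giving $L_{l,\a}=-(\a-1)^2+(l+1)(l+2)$ and the constant $\frac{1-2\a}{4}$, and the identity $\Re(-D_2^{-1}Q\,D_{2-\a}f,f)=((\frac 12 D_{2\a-4}D_2^{-1}Q)f,f)$ are all as in the paper (modulo a sign slip in your intermediate formula for $\Re(-\frac{2-2\a}{r}\pa_r f,f)$, which should be $-(1-\a)\int r^{-2}|f|^2$; your final $L_{l,\a}$ is nevertheless correct).

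Step 3, however, has a genuine gap. Applying a $W$-weighted Cauchy--Schwarz directly to the whole pairing $\la V_1f+V_2D_{-l-\a}^{-1}f,\,D_{-(l+2-\a)}^{-1}f\ra$ leaves you with a term of the form $\int W\,|D_{-(l+2-\a)}^{-1}f|^2$, i.e.\ a weighted norm of an \emph{antiderivative} of $f$, which is not controlled by $\int W|f|^2$; moreover your scheme would make $V_1$ enter the constant, whereas $\mu_{l,\a}[W^{-1}]$ in \eqref{eqdefmula} involves only $V_2$ and $W_{1;l-\a}$. The missing structural point is that the $V_1$-part is not estimated at all: its symmetrization is an exact non-negative square,
\[
\tfrac 12\bigl(D_{l+2-\a}^{-1}V_1+(D_{l+2-\a}^{-1}V_1)^*\bigr)=\bigl(W_{1;l-\a}^{1/2}D_{l+2-\a}^{-1}\bigr)^*\bigl(W_{1;l-\a}^{1/2}D_{l+2-\a}^{-1}\bigr),
\qquad W_{1;l-\a}=\tfrac 12\pa_rV_1+\tfrac{l+2-\a}{r}V_1>0,
\]
where the positivity of $W_{1;l-\a}$ is exactly what forces $\a<l+\frac 12$ (not the reasons you give for the range of $\a$). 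This positive square is then spent to absorb the $\|W_{1;l-\a}^{1/2}D_{l+2-\a}^{-1}f\|^2$ half of the Cauchy--Schwarz estimate for the $V_2$-cross term; only the residual $\frac 14\|W_{1;l-\a}^{-1/2}V_2D_{-l-\a}^{-1}f\|^2$ is finally bounded by $\mu_{l,\a}[W^{-1}](Wf,f)$ via a pointwise $W$-weighted Cauchy--Schwarz on the kernel of $D_{-l-\a}^{-1}$, which is what produces \eqref{eqdefmula} and the convergence condition \eqref{eqcondW}. Without this cancellation mechanism your weighted Cauchy--Schwarz does not close.
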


  \begin{proof}
   With the definition of $D_k$ and $D_k^{-1}$ given \eqref{Dkdef} and \eqref{eqdefDk-1}, 
  \bee
&   r^\a D_k r^{-\a} = D_{k-\a}, \quad r^\a \pa_r^2 r^{-\a} = \pa_r^2 - \frac{2\a}{r} \pa_r + \frac{\a (\a+1)}{r^2} \quad  \forall k, \a \in \RR, \\
& {  r^{\alpha} D_k^{-1} r^{-\alpha} = D_{k-\alpha}^{-1}, \quad \text{ where } \max\{k,k-\alpha \} \le 0 \text{ or } \min \{k,k-\alpha \} >0.}
  \eee
As for the expression given above, we can obtain the expression of $\tilde \calL_{l,\a}$ by 
\begin{align*}
\tilde \calL_{l,\a} &=  r^{\a} \left( -\partial_r^2 -\frac{2}{r} \partial_r  + \frac{(l+1)(l+2)}{r^2} + \frac{1}{2} (r \partial_r +1) -D_2^{-1} Q D_2 -Q + lT_l \right) r^{-\a} f \\
& = -\partial_r^2 f  + \frac{2\a}{r} \partial_r f- \frac{\a (\a +1)}{r^2} f  -2 \left( -\frac{\a}{r^2} f + \frac{1}{r} \partial_r f \right) + \frac{(l+1)(l+2)}{r^2} f  \\
& \quad + \frac{1}{2} (-\alpha f + r \partial_r f +f) - D_2^{-1} Q \cdot  D_{2-\alpha} f  - Qf + l r^{\alpha} T_l r^{-\a} f \\
& = -\partial_r^2 f  - \frac{2-2\a}{r} \partial_r f + \frac{-\a^2+ \a + (l+1)(l+2)}{r^2} f  + \frac{1}{2} (r\partial_r +1 -\a) f  \\
& \qquad -D_2^{-1} Q \cdot D_{2-\a} f - Qf + l r^\a T_l r^{-\a} f,
\end{align*}
and the nonlocal term should be
\begin{align*}
r^\a T_l r^{-\a} f
& = r^\a \circ D_{l+2}^{-1} \circ (V_1 + V_2 \circ D_{-l}^{-1}) r^{-\a}f \\
& = r^\a \circ D_{l+2}^{-1}  \circ r^{-\a} \circ  r^\a \circ (V_1 + V_2 \circ D_{-l}^{-1}) r^{-\a}f \\
& = D_{l+2-\a}^{-1} (V_1 f +V_2r^\a D_{-l}^{-1} r^{-\a} f)  \\
& = D_{l+2-\a}^{-1} V_1 f + D_{l+2-\a}^{-1} V_2 D_{-l-\a}^{-1} f,
\end{align*}
where we exploited $l+2-\a > 0$ and $-l-\a \le 0$. 

Now we compute $\calH_{l, \a; W}$ to satisfy \eqref{eqlowbddSchop}. All terms except the last one in \eqref{loweroper2} come from symmetrization of the local terms of $\tilde \calL_{l, \a}$ \eqref{tildeLlapha}, so we are left to determine $\mu_{l, \a}$ such that for all $f \in C_c^\infty(\RR^+)$, 
\be \Re \left((D_{l+2-\a}^{-1} V_1 + D_{l+2-\a}^{-1} V_2 D_{-l-\a}^{-1}) f,f\right)_{L^2(\RR^+)} \ge -(\mu_{l,\a}[W^{-1}] W f,f)_{L^2(\RR^+)}.  \label{eql2symW-1} \ee

We begin by showing the coercivity of the first term in \eqref{eql2symW-1}. Note that as an operator on $L^2_{\RR^+}$, 
\[ (D_k^{-1})^* = -D_{-k}^{-1},\quad \forall \,\, k \neq 0, \] 
we obtain that
\bee
 D_{l+2-\alpha}^{-1} \circ V_1 &=& D_{l+2-\a}^{-1} \circ V_1 \circ D_{-(l+2-\alpha)} \circ D_{-(l+2-\alpha)}^{-1},\\
 \left(D_{l+2-\a}^{-1} \circ V_1 \right)^* &=& D_{l+2-\a}^{-1} \circ (-D_{ l+2-\a}) \circ V_1 \circ D_{-(l+2-\alpha)}^{-1},
\eee
and it yields that
\bee
 && \frac 12 \left( D_{l+2-\a}^{-1} \circ V_1 +\left(D_{l+2-\a}^{-1} \circ V_1 \right)^* \right) = -D_{l+2-\a}^{-1} \circ W_{1;{ l-\a}}  \circ D_{-(l+2-\alpha)}^{-1}  \\
 &=& \left( W_{1;{ l-\a}}^\frac 12  \circ D_{l+2-\alpha}^{-1}  \right)^* \left( W_{1;{ l-\a}}^\frac 12  \circ D_{l+2-\alpha}^{-1}  \right),
\eee
where 
\be W_{1;{ l-\a}} = \frac 12 \pa_r V_1 + \frac{ l+2-\a}{r}V_1 = \frac{8{ (l-\a)}+4}{(r^2 + 2)^2} + \frac{32}{(r^2 + 2)^3} > 0.  \label{eqdefW1l+a} \ee
Notice that ${l-\a} > - \frac 12$ guarantees the positivity so as to take square root, and $W_{1;{ l-\a}} \sim \la r \ra^{-4}$.

Next, we control the second term in \eqref{eql2symW-1}. By Cauchy-Schwarz inequality, 
\bee
 && \Re \left(D_{l+2-\a}^{-1} V_2 D_{ -l-\a}^{-1} f, f  \right)_{L^2(\RR^+)} = -\Re \left( W_{1;{ l-\a}}^{-\frac 12} V_2 D_{-{ l-\a}}^{-1} f, W_{1;{ l-\a}}^\frac 12 D_{l+2-\alpha}^{-1} f \right)_{L^2(\RR^+)} \\ 
 &\ge& -\frac 14 \left\|  W_{1;{ l-\a}}^{-\frac 12} V_2 D_{-{ l-\a}}^{-1} f \right\|_{L^2(\RR^+)}^2 - \left\|  W_{1, { l-\a}}^\frac 12 D_{l+2-\alpha}^{-1} f \right\|_{L^2(\RR^+)}^2.
\eee
Recall the definition of $D_{\beta}^{-1}$ with $ \beta = { -l-\a} \le 0$, then again by Cauchy-Schwarz, we have the pointwise bound with any positive potential $W \gtrsim \la r \ra^{2\beta+1+}$ that
\bee
 |D_{\beta}^{-1} f(r)|^2 \le r^{-2\beta} \left( \int_r^\infty f^2 W ds \right) \cdot \left( \int_r^\infty W^{-1} s^{2\beta} ds \right).
\eee
So 
\bee
  \frac 14\left\|  W_{1;{ l-\a}}^{-\frac 12} V_2 D_{{ -l-\a}}^{-1} f \right\|_{L^2(\RR^+)}^2 \le \mu_{l, \a}[W^{-1}] ( W f, f)_{L^2(\RR^+)}^2,
\eee 
where the linear functional is
\bea
  \mu_{l, \a}[W^{-1}] &=& \textcolor{black}{\frac{1}{4}} \int_0^\infty W_{1;{ l-\a}}^{-1} V_2^2 r^{{ 2l+2\a}} \left(\int_r^\infty W^{-1} s^{{ -2l-2\a}} ds\right) dr  \nonumber \\
  &=& \textcolor{black}{\frac{1}{4}} \int_0^\infty W^{-1} s^{{ -2l-2\a}} \left(\int_0^s W_{1;{ l-\a}}^{-1} V_2^2 r^{{ 2l+2\a}} dr \right) ds,
  \label{eqdefmula}
\eea
with $V_2, W_{1;{ l-\a}}$ defined as in \eqref{eqdefV1V2} and \eqref{eqdefW1l+a}. The condition \eqref{eqcondW} ensures the exchange of integration order and  $\mu_{l, \a}[W^{-1} ] < \infty$. Combining these estimates yields \eqref{eql2symW-1} and therefore conclude the proof.
  \end{proof}

In order to obtain the spectral information of Schr\"{o}dinger operator $\calH_{l,\a;W}$ defined in \eqref{loweroper2}, we turn to the following GGMT method, which can describe the number of unstable eigenmodes.

\begin{theorem}[GGMT bound, {\cite[Theorem A.1]{MR4126325}}] \label{thmGGMT} For $l \in [0, \infty)$, suppose the linear operator defined on $C_c^\infty(\RR^+)$ by
\[H_l = -\pa_r^2 + \frac{l(l+1)}{r^2} + V, \]
with real-valued potential $V \in L_{loc}^2(\RR^+)$ is such that the closure of $H_l$ (which we still denote by $H_l$) is self-adjoint on $L^2(\RR^+)$. If the quantity $N_{p,l}(V)$ satisfies
\be
  N_{p, l}(V) := \frac{(p-1)^{p-1} \Gamma(2p)}{p^p \Gamma(p)^2} (2l+1)^{-(2p-1)} \int_0^\infty r^{2p-1} |V_-(r)|^p dr < 1, \label{eqNplGGMT}
\ee
where $V_- = \min \{ 0, V \}$, then there is no non-positive eigenvalue of the self-adjoint operator $H_l$ on $L^2(\RR^+)$.
\end{theorem}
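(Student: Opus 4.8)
\emph{Proof strategy.} This is a Bargmann-type bound on the absence of bound states, and I would obtain it from the Birman--Schwinger principle together with an explicit estimate of the free resolvent kernel. Suppose, for contradiction, that $H_l$ has an eigenvalue $E\le 0$ with eigenfunction $\psi\in L^2(\RR^+)\setminus\{0\}$. Decompose $V=V_++V_-$ into its positive and negative parts and set $H_l^{(0)}=-\pa_r^2+\frac{l(l+1)}{r^2}$, so that the eigenvalue equation reads $(H_l^{(0)}+V_+-E)\psi=-V_-\psi$. Since $V_+-E\ge 0$ and $H_l^{(0)}$ is a positive self-adjoint operator on $L^2(\RR^+)$, the operator $B_E:=H_l^{(0)}+V_+-E$ is positive and boundedly invertible, and the equation is equivalent to $1\in\sigma\big(B_E^{-1/2}(-V_-)B_E^{-1/2}\big)$. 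As $-V_-\ge 0$ this operator is positive, so its norm is $\ge 1$, hence so is the norm of $(-V_-)^{1/2}B_E^{-1}(-V_-)^{1/2}$; and from $B_E\ge H_l^{(0)}>0$, i.e. $B_E^{-1}\le (H_l^{(0)})^{-1}$ as quadratic forms, we conclude
\[ \big\|(-V_-)^{1/2}(H_l^{(0)})^{-1}(-V_-)^{1/2}\big\|_{op}\ge 1. \]
It therefore suffices to show that this norm is $<1$ whenever $N_{p,l}(V)<1$.

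\emph{Reduction to a kernel estimate.} For the self-adjoint realization of $H_l^{(0)}$ distinguished by the boundary behaviour $u\sim r^{l+1}$ near $0$ (automatic for $l\ge1$), the resolvent at energy $0$ has the explicit nonnegative kernel
\[ G_0(r,s)=\frac{1}{2l+1}\,\frac{\min(r,s)^{l+1}}{\max(r,s)^{l}}. \]
Thus $\mathcal K:=(-V_-)^{1/2}(H_l^{(0)})^{-1}(-V_-)^{1/2}$ is the integral operator with symmetric nonnegative kernel $|V_-(r)|^{1/2}G_0(r,s)|V_-(s)|^{1/2}$, and everything reduces to bounding $\|\mathcal K\|_{op}$ in terms of $N_{p,l}(V)$.

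\emph{The sharp constant.} This is where the precise coefficient in \eqref{eqNplGGMT} is produced. I would estimate $\|\mathcal K\|_{op}$ by a Schur test with a power weight, using H\"older's inequality with exponents $2p$ and $\tfrac{2p}{2p-1}$ to factor out $|V_-|^{1/2}$ at power $2p$ --- which produces exactly $\int_0^\infty r^{2p-1}|V_-|^{p}\,dr$ --- leaving the elementary one-dimensional integral
\[ \int_0^\infty \frac{\min(r,s)^{l+1}}{\max(r,s)^{l}}\,s^{\beta}\,ds=r^{\beta+2}\Big(\frac{1}{l+\beta+2}+\frac{1}{l-\beta-1}\Big),\qquad -l-2<\beta<l-1, \]
and optimizing over the free exponent $\beta$; the optimum reproduces $\sup_{t\in(0,1)}t(1-t)^{p-1}=\tfrac{(p-1)^{p-1}}{p^p}$, a Beta-function factor $\tfrac{\Gamma(2p)}{\Gamma(p)^2}$, and the scaling $(2l+1)^{-(2p-1)}$. (Equivalently, $r=e^{t}$ turns $G_0$ into the translation-invariant kernel $\tfrac{(rs)^{1/2}}{2l+1}e^{-(l+1/2)|t-\tau|}$, and Young's inequality for this convolution with an optimally chosen Lebesgue exponent gives the same bound, the $\Gamma$'s arising from the explicit $L^q$ norms of $e^{-(l+1/2)|t|}$.) One arrives at $\|\mathcal K\|_{op}\le N_{p,l}(V)$ --- for $p=1$ this is the classical Bargmann estimate $\|\mathcal K\|_{op}\le \operatorname{Tr}\mathcal K=\tfrac{1}{2l+1}\int_0^\infty r|V_-|\,dr$ --- so $N_{p,l}(V)<1$ forces $\|\mathcal K\|_{op}<1$, contradicting the Birman--Schwinger lower bound and proving the theorem.

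\emph{Main obstacle.} The skeleton (Birman--Schwinger $\Rightarrow$ estimate a sandwiched resolvent $\Rightarrow$ kernel bound) is routine. The two delicate ingredients are: (i) fixing the correct self-adjoint realization of $H_l^{(0)}$ so that $G_0$ above is legitimate and the form inequality $B_E^{-1}\le (H_l^{(0)})^{-1}$ holds --- harmless for $l\ge1$ but requiring a word for small indices, which matters here since \eqref{eqNplGGMT} is ultimately applied with the non-integer shifted index coming from $L_{l,\a}=-(\a-1)^2+(l+1)(l+2)$ in Lemma \ref{lemsymestnon}; and (ii) carrying out the optimization in the kernel estimate so that precisely the sharp coefficient $\tfrac{(p-1)^{p-1}\Gamma(2p)}{p^p\Gamma(p)^2}(2l+1)^{-(2p-1)}$, rather than a non-optimal constant, appears --- this optimization, not any hard estimate, is the real content of the bound.
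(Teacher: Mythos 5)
First, a point of comparison: the paper does not prove this theorem at all --- it is quoted verbatim from \cite[Theorem A.1]{MR4126325} and used as a black box in Proposition \ref{propL2coer} --- so there is no internal proof to measure your proposal against. On its own merits, your skeleton (Birman--Schwinger reduction to the zero-energy sandwiched resolvent, the explicit kernel $G_0(r,s)=\tfrac{1}{2l+1}\min(r,s)^{l+1}\max(r,s)^{-l}$, and a final kernel estimate producing the Beta-function constant) is indeed the standard route to the GGMT bound, and your remark about fixing the self-adjoint realization for non-integer effective indices is pertinent to how the theorem is actually invoked with $l_{\rm eff}$.

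The genuine gap is in the key quantitative step: the inequality $\|\mathcal K\|_{op}\le N_{p,l}(V)$ that you aim for is \emph{false} for $p>1$. The left side is homogeneous of degree $1$ in $V$ while $N_{p,l}(V)$ is homogeneous of degree $p$, so it fails upon replacing $V$ by $\lambda V$ with $\lambda\to0$; concretely, for $V_-=-A\,\mathbf 1_{[1,1+\epsilon]}$ with $A$ small one has $\|\mathcal K\|_{op}\sim A\epsilon/(2l+1)$ but $N_{p,l}\sim c_p A^p\epsilon$. Relatedly, no Schur test can deliver an $L^p$ bound on $V_-$: after H\"older in the $s$-integral the factor $|V_-(r)|^{1/2}$ remains outside, and $\sup_r |V_-(r)|^{1/2}r^{\gamma'}$ is not controlled by $\int_0^\infty r^{2p-1}|V_-|^p\,dr$. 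The correct chain --- and the actual content of GGMT --- is $\|\mathcal K\|_{op}^p\le\operatorname{Tr}\mathcal K^p$ (valid since $\mathcal K\ge0$), followed by the trace estimate $\operatorname{Tr}\mathcal K^p\le N_{p,l}(V)$. The latter is a genuinely $p$-linear bound on the cyclic integral $\int\prod_{j=1}^p|V_-(r_j)|\,G_0(r_j,r_{j+1})\,dr_1\cdots dr_p$; after $r=e^x$ it becomes a cyclic convolution against $e^{-(l+\frac12)|x|}$, and the constant $\tfrac{(p-1)^{p-1}\Gamma(2p)}{p^p\Gamma(p)^2}$ emerges from an optimization over all $p$ variables simultaneously, not from a single Schur weight. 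With this, $N_{p,l}(V)<1$ gives $\|\mathcal K\|_{op}\le N_{p,l}(V)^{1/p}<1$ and your contradiction goes through. Note that your own $p=1$ sanity check, $\|\mathcal K\|_{op}\le\operatorname{Tr}\mathcal K$, is already the trace bound rather than a Schur test --- it is exactly the $p=1$ case of the correct chain, and the discrepancy in homogeneity is invisible only because $p=1$.
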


Finally, we choose an appropriate function $W$ satisfying \eqref{eqcondW} and $\alpha$ such that the Schr\"{o}dinger operator $\calH_{l,\a;W}$ given in \eqref{loweroper2} is positive so that it yields the coercivity of $\calL_{l,\alpha}$ with $l=2$.
\begin{proposition}
\label{propL2coer}
      Let $l=2$ and $\a =0.2$, then there exists a uniform constant $\epsilon_0 > 0$ such that the operator $\tilde\calL_l$ defined in \eqref{tildeLlapha} satisfies
      \be
      \Re (\tilde \calL_{l,\alpha} g,g)_{L^2(\RR^+)}
      \ge \epsilon_0 \| g \|_{L^2(\RR^+)}^2, \quad \forall g \in C_c^\infty(\RR^+).
      \label{coercivityLl2}
      \ee
  \end{proposition}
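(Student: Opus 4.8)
The plan is to combine Lemma~\ref{lemsymestnon} (with the concrete choices $l=2$, $\a=0.2$) with the GGMT bound of Theorem~\ref{thmGGMT}, after selecting an explicit comparison potential $W$ satisfying \eqref{eqcondW}. First I would fix the parameters: with $l=2$ and $\a=0.2$ one has $2l+2\a-1 = 4.4 > 2$, so condition \eqref{eqcondW} only requires $W>0$ with $W \gtrsim \la r\ra^{-2+\epsilon}$; a convenient choice is something like $W(r) = \la r\ra^{-2}$ (or $\la r \ra^{-2+\epsilon}$ if the borderline decay causes trouble in \eqref{eqdefmula}), which keeps the functional $\mu_{l,\a}[W^{-1}]$ finite and explicitly computable since $V_2$, $W_{1;l-\a}$ from \eqref{eqdefV1V2}, \eqref{eqdefW1l+a} are rational functions of $r^2$. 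With these choices, Lemma~\ref{lemsymestnon} reduces \eqref{coercivityLl2} to proving that the Schr\"odinger operator $\calH_{2,0.2;W}$ of \eqref{loweroper2} is bounded below by $\epsilon_0 > 0$ on $L^2(\RR^+)$.

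Next I would massage $\calH_{2,0.2;W}$ into the exact form $-\pa_r^2 + \frac{\nu(\nu+1)}{r^2} + V$ demanded by Theorem~\ref{thmGGMT}: here $\frac{L_{l,\a}}{r^2} = \frac{-(\a-1)^2 + (l+1)(l+2)}{r^2} = \frac{12 - 0.64}{r^2} = \frac{11.36}{r^2}$, so the effective angular parameter $\nu$ solves $\nu(\nu+1) = 11.36$, giving $\nu \approx 2.89$. To get a \emph{strictly positive} lower bound rather than just nonnegativity, I would split off a small multiple of the Hardy/angular term: write $\calH_{2,0.2;W} - \epsilon_0 = -\pa_r^2 + \frac{\nu'(\nu'+1)}{r^2} + \widetilde V$ where $\nu'$ is slightly smaller than $\nu$ (absorbing the constant $\frac{1-2\a}{4} = 0.15$ and the $-\epsilon_0$ into a genuine potential, using that $\frac{1}{r^2}$ times a small constant dominates a constant near the origin but we actually want the reverse — so more carefully, keep the constant $\frac{1-2\a}{4} - \epsilon_0$ as part of the potential and only shave the $\frac{1}{r^2}$ coefficient by an amount $o(1)$ to have room). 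Then the collected potential is
\[
\widetilde V(r) = \frac{1-2\a}{4} - \epsilon_0 + \frac 12 D_{2\a-4}D_2^{-1}Q - Q - l\,\mu_{l,\a}[W^{-1}]\,W + \frac{\nu(\nu+1) - \nu'(\nu'+1)}{r^2},
\]
and one computes $D_2^{-1}Q$, then $D_{2\a-4}D_2^{-1}Q$, explicitly from $Q(x) = \frac{4(6+r^2)}{(2+r^2)^2}$ — these are elementary since $D_2^{-1}Q(r) = r^{-2}\int_0^r Q(s)s^2\,ds$ integrates in closed form. The task is then to verify $N_{p,\nu'}(\widetilde V) < 1$ for some convenient $p$ (likely $p=1$ or $p=2$), which is the single remaining inequality and is reduced to bounding $\int_0^\infty r^{2p-1}|\widetilde V_-(r)|^p\,dr$ against the explicit constant $\frac{p^p\Gamma(p)^2}{(p-1)^{p-1}\Gamma(2p)}(2\nu'+1)^{2p-1}$.

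I expect the main obstacle to be the quantitative bookkeeping in this last inequality: one must pin down $\mu_{2,0.2}[W^{-1}]$ numerically (an explicit but messy double integral of rational functions, cf. the remark that two integrals in \eqref{eqNpleff} are evaluated with computer assistance), then identify the negative part $\widetilde V_-$ — which lives on a bounded region near where $-Q$ and the nonlocal correction $-l\mu W$ are not offset by $\frac 12 D_{2\a-4}D_2^{-1}Q$ and the $r^{-2}$ terms — and bound its $L^p(r^{2p-1}dr)$ norm sharply enough to clear the GGMT threshold with the slightly-degraded index $\nu' < \nu$. There is genuine tension here: shaving $\nu$ to $\nu'$ to gain the $\epsilon_0 > 0$ simultaneously \emph{weakens} the GGMT constant $(2\nu'+1)^{2p-1}$, so $\epsilon_0$ must be taken small and the margin in $N_{p,\nu}(\widetilde V)|_{\epsilon_0 = 0} < 1$ must be strict; establishing that strict margin, possibly after optimizing over $p$ and over the precise profile of $W$, is the crux. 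The choice $\a = 0.2$ (rather than $\a=0$) is presumably what makes both the coercivity condition $l-\a > -\frac12$ in \eqref{eqdefW1l+a} comfortable and the numerics in \eqref{eqNplGGMT} fall below $1$; I would treat $\a$ as a tunable parameter and confirm $0.2$ works, adjusting if the computer-assisted evaluation comes out too close to the boundary.
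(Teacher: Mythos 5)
Your overall architecture is the same as the paper's: reduce via Lemma~\ref{lemsymestnon} to a Schr\"odinger operator $\calH_{2,0.2;W}$, pick an explicit $W$, and verify the GGMT criterion of Theorem~\ref{thmGGMT} numerically. The gap is in the one tuning decision you commit to: you keep essentially the entire coefficient $L_{l,\a}=11.36$ in the angular term and shave it only by $o(1)$, whereas the paper introduces $\theta=1/2$ and moves \emph{half} of $\frac{L_{l,\a}}{r^2}$ (i.e.\ $\frac{5.68}{r^2}$) into the potential $U_{l,\a,\theta;W}$. That large positive $\frac{\theta L_{l,\a}}{r^2}$ is not cosmetic: the term $\frac12 D_{2\a-4}D_2^{-1}Q$ contains $\frac{\a-2}{r}D_2^{-1}Q$, and since $D_2^{-1}Q(0)=Q(0)/3=2\neq0$ this behaves like $-\frac{3.6}{r}$ as $r\to0$; together with $-Q$ (which equals $-6$ at the origin and is still $\approx-3$ at $r=1$) and $-l\mu_{l,\a}[W^{-1}]W$, the potential without the $\theta$-split is of size $-5$ to $-8$ on an interval of length $O(1)$. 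The GGMT integral $\int_0^\infty r^{2p-1}|V_-|^p\,dr$ then comes out far above the threshold $\frac{p^p\Gamma(p)^2}{(p-1)^{p-1}\Gamma(2p)}(2\nu+1)^{2p-1}$ for each of your suggested exponents (already for $p=1$ the Bargmann-type integral of $rQ$ over the region where the potential is negative exceeds $2\nu+1\approx6.8$), and a rough check for $p=4$ with the full $\nu\approx2.89$ also fails. So the "single remaining inequality" is not mere bookkeeping: with your split it is false, and recognizing that a substantial fraction of the $r^{-2}$ barrier must be sacrificed to tame the potential near the origin (at the acceptable cost of lowering $l_{\rm eff}$ to $\approx1.94$) is the missing idea. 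The paper's choice then clears the bound only with modest margin ($N_{4,l_{\rm eff}}\approx0.87$).

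Two secondary points. First, your "convenient" $W=\la r\ra^{-2}$ violates \eqref{eqcondW} at the endpoint and makes $\mu_{l,\a}[W^{-1}]$ divergent (the outer integrand in \eqref{eqdefmula} decays only like $s^{-1}$); the fallback $\la r\ra^{-2+\epsilon}$ gives $\mu_{l,\a}[W^{-1}]\sim\epsilon^{-1}$, so $W$ cannot be chosen casually either — the paper takes $W$ bounded below at infinity (keeping $\mu$ small) and singular like $r^{-2.4}$ at the origin (where the $\theta$-split absorbs it). Second, Theorem~\ref{thmGGMT} only excludes non-positive \emph{eigenvalues}; to convert this into the quadratic-form bound \eqref{coercivityLl2} you must also check essential self-adjointness (which requires the retained angular coefficient to exceed $3/4$) and that $\inf\sigma_{ess}>0$. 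The latter is automatic for your decaying $W$ since the potential tends to $\frac{1-2\a}{4}=0.15$, and the paper verifies it with $\sigma_{ess}\subset[0.07,\infty)$ for its non-decaying $W$, but the step needs to be stated: it, not a pre-shift of the operator by $\epsilon_0$, is what upgrades "no non-positive eigenvalue" to a strictly positive lower bound.
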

  \begin{proof} From \eqref{eqlowbddSchop}, it suffices to prove the lower bound of $\Re (\calH_{l, \a; W} g, g)$ for $g \in C^\infty_c(\RR^+)$ with the operator from \eqref{loweroper2}. We introduce $\theta \in [0, 1]$ to rewrite $\calH_{l,\alpha;W}$ as
      \begin{align*}
\calH_{l,\alpha;W} = -\partial_r^2 + \frac{(1-\theta) L_{l,\alpha}}{r^2} + U_{l,\alpha,\theta;W}
          = -\partial_r^2 + \frac{l_{\text{eff}}(l_{\text{eff}}+1)}{r^2} +  U_{l,\alpha,\theta;W},
      \end{align*}
      where
      \[
      l_{\text{eff}} = \left( \frac{1}{4} + (1-\theta) L_{l,\alpha} \right)^\frac{1}{2} - \frac{1}{2},
      \]
      and
      \[
      U_{l,\alpha,\theta;W} = \frac{\theta L_{l,\alpha}}{r^2} + \frac{1 - 2\a}{4} + \frac 12 D_{2\a - 4} D_2^{-1} Q - Q - l \mu_{l, \a}[W^{-1}] W.
      \]
      With $l = 2$, $\a = 0.2$ and
\be
 p = 4,\quad \theta = 0.5, \quad
 W(r) = (0.01 + r^2)^{-1.2} + 0.02,
\ee
one can compute the integrals \eqref{eqdefmula} and \eqref{eqNplGGMT}  numerically 
\be \mu_{l,\a}[W^{-1}] \approx 1.9137,\quad N_{p, l_{\rm eff}}(U_{l, \a, \theta; W}) \approx 0.8687 < 1. \label{eqNpleff} \ee
Now that $\min \{ U_{l, \a, \theta; W}, 0\} \in C^0_c(\RR^+)$ and $(1-\theta)L(l, \a) = 0.5\cdot L(2,0.2) = 0.5 \cdot(12 - 0.8^2) > \frac 34$, standard theory (see e.g. \cite[Theorem X.7, X.8, X.10]{zbMATH03483022}) implies that $\calH_{l, \a; W}$ is essentially self-adjoint on $L^2(\RR^+)$. Therefore the GGMT bound (Theorem \ref{thmGGMT}) and the numerics \eqref{eqNpleff} indicates the non-existence of non-negative eigenvalue. Moreover, noticing that 
$$\min \left\{ U_{l, \a, \theta; W} - \frac{1-2\a}{4} + l\mu_{l, \a}[W^{-1}] W(\infty) + \epsilon, 0\right\} \in C^0_c(\RR^+),\quad \forall \epsilon > 0,$$ 
we also have $\sigma_{ess}(\calH_{l,\a;W}) \subset [\frac{1-2\a}{4}-2 \mu_{l, \a}[W^{-1}] W(\infty),\infty) \subset  [0.07, \infty)$. These facts yield the existence of $\epsilon_0 > 0$ such that  \begin{align*}
        \Re \left( \tilde \calL_{l,\alpha} g, g \right)_{L^2(\RR^+)} 
        \ge \left( \calH_{l,\alpha;W} g, g \right)_{L^2(\RR^+)}  
      \ge \epsilon_0 \| g \|_{L^2(\RR^+)}^2, \; 
      \quad \forall \; g \in C_c^\infty(\RR^+).
      \end{align*}
  \end{proof}

 \subsection{Mode stability for $l = 1$.}\label{sec24} This section is devoted to the behavior of $\calL_1$, in which case the methods introduced in Section \ref{subsecl3} and \ref{subsecl2} fail due to the existence of unstable eigenpair $(\calL_1 + \frac 12)\pa_r Q = 0$ \eqref{eqexplicitunstablemode} and the weak coercivity from angular momentum. 
 
 Here, motivated by \cite{oseenli}, we can construct a formal wave operator such that it can transform $\calL_1$ into a local operator and eliminate the unstable eigenpair. 

 \begin{proposition} \label{propwaveop2}
     We define a linear operator $T$ as
    \be  
     T := I - \frac{\pa_r Q}{D_3^{-1} \pa_r Q}D_3^{-1},
     \label{waveop2}
    \ee
    and a nonlocal differential operator $\tilde \calL_1$ by
     \be
    \tilde \calL_1  := -\Delta_1 + \frac 12 \Lambda - D_2^{-1} Q \cdot \pa_r - 2Q - 2\pa_r \left(\frac{\pa_r Q}{D_3^{-1}\pa_r Q} \right), 
    \label{localLl1}
    \ee
    where $D_3^{-1}$ is given by \eqref{Dkdef} and $\Delta_1$ is given by \eqref{eqdefDeltal-1}. Then we have
     \be
    T \calL_1 f = \tilde \calL_1 T f, \quad \text{ for any } f \in \calD_1,
    \label{eqwaveopcom}
    \ee
    with
    \begin{align}
    \calD_1 := \Big\{ f(r)\in L^2(\RR^+; r^2 dr): \;   & f(r) Y_{1,m}(\Omega) \in H^\infty(\RR^3) \text{ and }  \notag \\
    & \calL(f(r) Y_{1,m}(\Omega)) \in H^\infty(\RR^3) \text{ for some $-1 \le m \le 1$}.  
    \Big\}
    \label{domL1}
    \end{align}
\end{proposition}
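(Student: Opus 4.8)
The goal is to verify the intertwining identity $T\calL_1 f = \tilde\calL_1 Tf$ on the dense domain $\calD_1$. The heuristic behind the construction of $T$ is that it is designed to kill the known unstable mode: since $D_3 \partial_r Q = \Delta_1(\partial_r Q)$ by \eqref{eqDeltalDl} (with $D_3 = D_{l+2}$, $D_{-l} = D_{-1} = \partial_r$ at $l=1$) and $\partial_r Q$ is essentially the generator of translations, $T(\partial_r Q) = 0$, and one expects the conjugated operator to have lost that eigenvalue while becoming local in its most problematic piece. So the proof is a direct, if delicate, computation: expand both sides using the decomposition formulas from Lemma 2.3, and match term by term.

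First I would write $\calL_1 = -\Delta_1 + \tfrac12\Lambda - 2Q - \partial_r\Delta^{-1}Q\cdot\partial_r - \partial_r Q\cdot\partial_r\Delta_1^{-1}$, and note that by \eqref{eqDeltalintop1}–\eqref{eqDeltalintop2} specialized to $l=1$ the two nonlocal pieces simplify: $\partial_r\Delta^{-1}Q\cdot\partial_r f$ is a genuine local term since $\Delta^{-1}Q$ is an explicit radial function (indeed $\partial_r\Delta^{-1}Q = D_2^{-1}Q/r \cdot r = $ something one can relate to $D_2^{-1}Q$ via $\partial_r\Delta_0^{-1} = D_2^{-1}$), while $\partial_r Q\cdot\partial_r\Delta_1^{-1}f$ is the truly nonlocal operator — using \eqref{eqDeltalintop1}, $\partial_r\Delta_1^{-1} = \tfrac13(2D_3^{-1} + D_1^{-1})$. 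The key structural fact driving the whole computation is the single-term nonlocality: $T$ involves only $D_3^{-1}$, $\calL_1$'s nonlocality after simplification involves $D_3^{-1}$ and $D_1^{-1}$, and I expect the commutator algebra to be arranged so the $D_1^{-1}$ piece and the leftover $D_3^{-1}$ pieces telescope. Concretely, I would compute $T\calL_1 f - \calL_1 Tf$ (or rather $T\calL_1 f - \tilde\calL_1 Tf$ directly), collect all terms containing $D_3^{-1}f$ (and $D_3^{-1}$ applied to derivatives of $f$), and show the coefficient functions vanish identically — this is where the profile equation \eqref{eqprofileQ} in its radial form, i.e. $-\Delta_0 Q + \tfrac12\Lambda Q - Q^2 - \partial_r\Delta^{-1}Q\cdot\partial_r Q = 0$, gets used to simplify expressions like $\partial_r(\partial_r Q)$ combined with $Q\cdot\partial_r Q$.

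The mechanical heart of the argument is commuting $D_3^{-1}$ past the local operators appearing in $\calL_1$. I would record the commutator identities $[\,\cdot\,, D_3^{-1}]$ or equivalently $D_3^{-1}\circ(\text{local op}) = (\text{local op}')\circ D_3^{-1} + (\text{remainder})$ for each building block — the Laplacian $\Delta_1$, the scaling operator $\Lambda$, multiplication by $Q$, and the first-order term $\partial_r\Delta^{-1}Q\cdot\partial_r$ — using repeatedly the algebraic relation $D_a D_b^{-1} = I + \frac{a-b}{r}D_b^{-1}$ and $D_a^{-1}r^k = $ (shift), analogous to the conjugation identities used in the proofs of Lemmas 2.7–2.8. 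Then $T\calL_1 f = \calL_1 f - \frac{\partial_r Q}{D_3^{-1}\partial_r Q}D_3^{-1}\calL_1 f$; pushing $D_3^{-1}$ through $\calL_1$ produces $(\calL_1 + (\text{correction}))D_3^{-1}f$ plus terms that, after dividing by $D_3^{-1}\partial_r Q$ and multiplying by $\partial_r Q$, assemble into exactly the extra potential $-2\partial_r\!\big(\frac{\partial_r Q}{D_3^{-1}\partial_r Q}\big)$ in \eqref{localLl1} together with the replacement of $\partial_r Q\cdot\partial_r\Delta_1^{-1}$ by the local $-D_2^{-1}Q\cdot\partial_r$ form. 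I would double-check the domain issues separately: membership in $\calD_1$ guarantees $f$ and $\calL f$ are in $H^\infty(\RR^3)$ so all the integrals defining $D_k^{-1}$ converge and the formal manipulations (integration by parts, interchange of $D_3^{-1}$ with differentiation) are legitimate, and one must confirm $D_3^{-1}\partial_r Q$ is nowhere vanishing on $\RR^+$ so that division by it is harmless — this is the non-vanishing condition alluded to in Remark \ref{rmkprof}, and for the explicit $Q$ it can be checked by direct computation.

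The main obstacle I anticipate is purely bookkeeping: there are many terms, and the cancellations that turn the nonlocal $D_1^{-1}$ contribution into something local are not obvious a priori — they rely on a precise conspiracy between the coefficient $\partial_r\Delta_1^{-1} = \tfrac13(2D_3^{-1}+D_1^{-1})$, the commutators of $D_3^{-1}$ with the $\tfrac12\Lambda$ and $-\partial_r\Delta^{-1}Q\cdot\partial_r$ terms, and the profile equation \eqref{eqprofileQ}. I would organize the computation by first treating the purely local part of $\calL_1$ (everything except the $\partial_r Q\cdot\partial_r\Delta_1^{-1}$ term), verify that $T$ conjugates it into the local part of $\tilde\calL_1$ modulo a single nonlocal remainder of the form $(\text{function})\cdot D_3^{-1}f$, then show the genuinely nonlocal term of $\calL_1$ contributes exactly the compensating remainder. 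If a discrepancy appears it almost certainly signals that the profile equation has not been invoked in the right spot, so I would keep \eqref{eqprofileQ} (radial form) and its $r$-derivative at hand throughout.
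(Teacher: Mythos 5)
Your plan is essentially the paper's proof: the paper likewise splits $\calL_1$ using $\pa_r\Delta_1^{-1}=\tfrac13\bigl(2D_3^{-1}+D_0^{-1}\bigr)$ (note your $D_1^{-1}$ should be $D_0^{-1}$ by \eqref{eqDeltalintop1} with $l=1$), pushes the integral operator in $T$ through each piece by integration by parts and exchange of integration order, finds that the coefficients of $\pa_r f$ and $f$ in $\tilde\calL_1 Tf$ match automatically, and reduces the remaining nonlocal coefficient to a single ODE identity for $\pa_r Q/(r^3D_3^{-1}\pa_r Q)$ that is verified using the profile equation \eqref{eqprofileQ}, its derivative $(\calL_1+\tfrac12)\pa_r Q=0$, and its $D_2^{-1}$-integrated form. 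The non-vanishing of $D_3^{-1}\pa_r Q$ on $\RR^+$ is checked from the explicit formula exactly as you anticipate, so your approach and the paper's coincide.
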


Before we continue to give the proof of Proposition \ref{propwaveop2}, we give some remarks about the robustness of the wave operator method.
\begin{remark}[Wave operator method]\label{rmkwom}
    Our choice of $T$ is motivated by the wave operator method in \cite{oseenli}, where they considered a certain nonlocal differential operator $\mathscr{L}$ in the first spherical class, with nonlocality from Biot-Savart law $\nabla^\perp \Delta^{-1}$ and a kernel $\mathfrak{g} \in L^2(\RR^+)$. The authors define the wave operator as 
    \[ \mathscr{T} := I - \mathscr{S}, \quad \mathscr{S} := \frac{(\cdot, \mathfrak{g})_{L^2([0, r])}}{(\mathfrak{g}, \mathfrak{g})_{L^2([0, r])}}\mathfrak{g}.  \]
    Then it satisfies $\mathscr{T} \mathfrak{g}= 0$, the wave operator property
     \be  \mathscr{T} \mathscr{T}^* = I,\quad  \mathscr{T} \mathscr{T}^* = I - \frac{(\cdot, \mathfrak{g})_{L^2([0,\infty))}}{(\mathfrak{g}, \mathfrak{g})_{L^2([0, \infty))}}\mathfrak{g}, \label{eqwaveopprop} \ee
    and more importantly $\mathscr{T} \mathscr{L} = \tilde{\mathscr{L}} \mathscr{T}$, where $\tilde{\mathscr{L}}$ is a local differential operator with better coercivity. 
    
    In our case, we generalize this construction to the weighted space $L^2_\omega$ and our operator $T$ \eqref{waveop2} can be rewritten as 
    \[ T = T_\omega := I - S_\omega, \quad S_\omega := \frac{(\cdot, \pa_r Q)_{L^2_\omega([0, r])}}{(\pa_r Q, \pa_r Q)_{L^2_\omega([0, r])}}\pa_r Q, \quad  \omega = -\frac{r^3}{\pa_r Q}. \]
    Here $L^2_\omega$ is well-defined thanks to that $\pa_r Q < 0$ for any $r > 0$, and the weight is selected to uniformize nonlocal terms in $T_\omega \calL_1$. 
    
    We also stress that our $T_\omega$ is not a wave operator in the sense of \eqref{eqwaveopprop} since $\pa_r Q \notin L^2_\omega([0,\infty))$. Nevertheless, $T$ is still well-defined, $T (\pa_r Q) = 0$ and the commutator property \eqref{eqwaveopcom} holds, which indicates that $\sigma(\tilde \calL_1)$ includes $\sigma(\calL_1)$ except the unstable eigenpair $(-\frac 12, \pa_r Q)$. 
\end{remark}

\begin{remark}[Application for other profiles] \label{rmkprof}
    From the proof, this proposition holds for any radial $Q$ solving the profile equation \eqref{eqprofileQ} as long as 
    \be  D_3^{-1} \pa_r Q(r) \neq 0,\quad \forall \,\,r > 0,\label{eqnonvanishcond} \ee
    to ensure $T$ and $\tilde \calL_1$ well-defined as \eqref{waveop2} and \eqref{localLl1}. For the excited self-similar profiles found in \cite{Brenner_Constantin_Leo_Schenkel_Venkataramani_steady_state_99, MR1651769}, we could numerically check this non-vanishing condition to hold for the first three profiles, although an analytical proof is still missing. 
\end{remark}

\begin{remark}[Application in radial case]
\label{rmkradial}
    In the radial case, the corresponding linear operator
    \[ T_0 := I - \frac{g_0}{D_2^{-1} g_0} D_2^{-1},\quad g_0 = \Lambda Q, \]
   can also localize $\calL_0$ and remove the unstable eigenmodes, offering a different perspective to the proof of \cite[Proposition $4.3$]{MR4685953}. Indeed, it is equivalent to the combination of the partial mass coordinate for localization \eqref{eqlocalcalLl1} and supersymmetry method for removing unstable mode (\cite[Proposition $4.3$]{MR4685953}) by noticing that 
   \[ T_0 = \left( \pa_r - \frac{(D_2^{-1} g_0)'}{D_2^{-1} g_0} \right) \circ D_2^{-1}. \]
    \end{remark}

Next, we are devoted to the proof of the main result Proposition \ref{propwaveop2}.
\begin{proof}[Proof of Proposition \ref{propwaveop2}]
First of all, for any $f \in \calD_1$ (see \eqref{domL1} for the definition), it is easy to verify that $T \calL_1 f$ and $\tilde \calL_1 Tf$ are both pointwisely well-defined on $\RR^+$, hence it suffices to verify the commutator property \eqref{eqwaveopcom}. For notational simplicity, we define 
\be g := \pa_r Q,\quad  G := r^3 D_3^{-1} g = Qr^3 -  3r^2 D_2^{-1} Q,  \label{eqGdef} \ee
then $T$ can be rewritten as 
\be T = I - \frac{g}{G}\int_0^r (\cdot) s^3 ds.  \label{eqTdef2} 
\ee

    \underline{Step 1. Computing $T \calL_1$.} Applying \eqref{eqDeltalintop1}, we decompose 
    \bea
T \calL_1 f &=& T \left[ (-\Delta_1 + \frac 12 \Lambda)f + (-2Q - D_2^{-1} Q\cdot \pa_r)f - \frac 23 g \cdot D_3^{-1} f - \frac 13 g \cdot D_0^{-1} f \right] \nonumber\\
&:=& \calI_1 + \calI_2 -\frac 23 \calI_3 - \frac 13 \calI_4. \label{eqcomputeTcalL1}
\eea
We compute each term. For $\calI_1$,
\bee
\calI_1 
&=& (-\Delta_1 + \frac 12 \Lambda) f + \frac{g}{G}\int_0^r \left(\pa_s^2 + (\frac 2s - \frac s2 )\pa_s  - \frac{2}{s^2}- 1\right)f s^3 ds \\
&=&( -\Delta_1 + \frac 12 \Lambda) f + \frac{g}{G} \left[ \pa_r f \cdot r^3 - \pa_r (r^3) f + \left(\frac 2r - \frac r2\right) r^3 f \right]  \\
&+& 
\frac{g}{G} \int_0^r f \cdot \left[ \pa_s^2 (s^3) - \pa_s \left( (\frac 2s - \frac s2) s^3 \right) - (\frac{2}{s^2} + 1) s^3  \right] ds \\
&=& \left( -\Delta_1 + \frac 12 \Lambda + \frac {g}{G} \left(r^3\pa_r - r^2 - \frac 12r^4 \right)\right)f + \frac gG \int_0^r fs^3 ds. 
\eee

Similarly, for $\calI_2$, 
  \bee \calI_2
  &=& \left(- D_2^{-1} Q \cdot \pa_r - 2Q + \frac{g}{G}r^3 D_2^{-1} Q\right) f + \frac gG \int_0^r f \left( -\pa_s (D_2^{-1} Q s^3) + 2Q s^3 \right)ds \\
  &=& \left(- D_2^{-1} Q \cdot \pa_r - 2Q + \frac{g}{G}r^3 D_2^{-1} Q\right) f + \frac gG \int_0^r f \cdot \left( Q s^3 - s^2 D_2^{-1} Q  \right)ds.
  \eee
  
  Next for $\calI_3$, 
  \bee
\calI_3
&=& \frac{g}{r^{3}} \int_0^r fs^3 ds - \frac{g}{G} \int_0^r g(s) \int_0^s f(t) t^3 dt ds \\
&=& \frac{g}{r^{3}} \int_0^r fs^3 ds - \frac{g}{G}Q \int_0^r f(t) t^3 dt + \frac gG \int_0^r f(t) t^3 Q(t) dt,
\eee
where we exchange the integration order and used $g = \pa_r Q$ so that $\int_t^r g(s) ds = Q(r) - Q(t)$.

 For $\calI_4$, with the definition of $G$ \eqref{eqGdef}, we obtain
\bee
\calI_4 
&=& - g\int_r^\infty f(s) ds + \frac{g}{G} \int_0^r g(s) s^3 \int_s^\infty f(t) dt ds \\
&=& - g\int_r^\infty f(s) ds + \frac{g}{G} \left(\int_0^r g(s) s^3 \int_s^r f(t) dt ds + \int_0^r g(s) s^3 ds \cdot \int_r^\infty f(t) dt \right) \\
&=& \frac gG \int_0^r f(t) G(t) dt = \frac gG \int_0^r f \cdot \left( Q s^3 - 3s^2 D_2^{-1} Q \right) ds.
\eee

Then plugging these components into \eqref{eqcomputeTcalL1} yields
\be
  T \calL_1 f = -\pa_r^2 f+ A_0 (r)\pa_r f+ B_0(r)f + C_0(r) \int_0^r fs^3 ds, \label{eqTcalL1compute1} \ee
  where 
  \be
  \begin{cases}
        A_0(r) = -\frac 2r + \frac 12 r + \frac gG r^3 - D_2^{-1} Q,   \\
    B_0(r) = \frac {2}{r^2} + 1 - 2Q + \frac gG \left(-r^2 - \frac 12 r^4 + D_2^{-1} Q r^3 \right),  \\
    C_0(r) = \frac gG\left(1 - \frac 23 r^{-3} G + \frac 23 Q \right) = \frac gG\left(1 + \frac 2r D_2^{-1} Q \right).
  \end{cases}
  \label{eqTcalL1compute2} 
\ee

\mbox{}

\underline{Step 2. Verification of \eqref{eqwaveopcom}.} Rewrite $\tilde \calL_1$ \eqref{localLl1} as 
\[
\tilde \calL_1 = -\partial_r^2 + A(r) \partial_r  + B(r),
\]
with 
\be
\begin{cases}
     A(r) =  -\frac{2}{r} + \frac{1}{2} r -D_2^{-1} Q,  \\
    B(r) = \frac{2}{r^2} +1 -2Q - 2\pa_r \left( \frac{g}{G} \right)
     = \frac{2}{r^2} +1 -2Q - 2 \partial_r \left( \frac{g}{G} \right) r^3 -6r^2 \frac{g}{G}.
\end{cases}
  \label{eqtildecalL1AB}
\ee
Then a direct differentiation of $Tf$ \eqref{eqTdef2} implies
\bea
  \tilde \calL_1 Tf &=& -\partial_r^2 f + \frac g G \pa_r (r^3 f) + 2 \pa_r \left( \frac{g}{G} \right) r^3 f +  \partial_r^2 \left( 
 \frac{g}{G}\right) \int_0^r f s^3 ds\nonumber \\
 &+& A \left( \partial_r f -  \frac{g}{G} r^3 f -  \partial_r \left( 
 \frac{g}{G}\right) \int_0^r f s^3 ds  \right) + B \left( f - \frac gG \int_0^r fs^3 ds \right) \nonumber \\
 &=& -\pa_r^2 f  + \left( r^3 \frac{g}{G} + A \right) \partial_r f 
 + \left( 2 \partial_r \left( \frac{g}{G}  \right) r^3 + 3r^2 \frac{g}{G} - A \frac{g}{G} r^3 + B \right) f  \nonumber \\
 &+& \left( \partial_r^2 \left( \frac{g}{G}  \right) - A \partial_r \left( \frac{g}{G} \right) - B \frac{g}{G} \right) \int_0^r f s^3ds. \label{eqtildecalL1T}
\eea
Now it suffices to check the identification of coefficient functions from \eqref{eqtildecalL1T} and \eqref{eqTcalL1compute1}. The forms of $A_0, B_0$ \eqref{eqTcalL1compute2} and $A, B$ \eqref{eqtildecalL1AB} easily implies 
\bee
  r^3 \frac gG + A = A_0,\qquad  2 \partial_r \left( \frac{g}{G}  \right) r^3 + 3r^2 \frac{g}{G} - A \frac{g}{G} r^3 + B = B_0.
\eee
Therefore, the commutator formula \eqref{eqwaveopcom} is reduced to verifying
\bee
   C_0(r) = \partial_r^2 \left( \frac{g}{G}  \right) - A \partial_r \left( \frac{g}{G} \right) - B \frac{g}{G}, 
\eee
with $C_0(r)$ determined in \eqref{eqTcalL1compute2}, namely 
\be
 \left( -\partial_r^2  + A\pa_r + B + 1 + \frac 2r D_2^{-1} Q \right)\left( \frac{g}{G}  \right) = 0.
\label{eqlastident}
\ee
We will be dedicated to proving \eqref{eqlastident}, and write $g = \pa_r Q$ to use the profile equation. 

As preparation, we recall the profile equation \eqref{eqprofileQ} and derive the following identities by taking $\pa_r$ and $D_2^{-1}$ respectively
\bea
  \left(\calL_1 + \frac 12 \right) (\pa_r Q)  &=& 0, \label{eqequationofQ1} \\
  -\pa_r Q + \frac{r}{2} Q - \frac 12 D_2^{-1} Q - Q D_2^{-1} Q &=& 0. \label{eqequationofQ2}
\eea
Note that \eqref{eqequationofQ1} is the eigen equation \eqref{eqexplicitunstablemode}, and for the identity \eqref{eqequationofQ2}, we have exploited $D_2(Q D_2^{-1} Q) = Q^2 + \na Q \cdot \na \Dein Q$. Nex, from $\pa_r G = \pa_r Q \cdot r^3$, we compute 
\bee
  \pa_r \left( \frac{\pa_r Q}{G} \right) &=& \frac{\pa_r^2 Q}G - \frac{(\pa_r Q)^2 r^3}{G^2}, \\
  \pa_r^2 \left( \frac{\pa_r Q}{G} \right) &=& \frac{\pa_r^3 Q}G - 3\frac{\pa_r^2 Q \cdot \pa_r Q r^3}{G^2} - 3\frac{(\pa_r Q)^2 r^2}{G^2} + 2\frac{(\pa_r Q)^3 r^6}{G^3}.
\eee

Therefore 
\bea
  &&\left(-\pa_r^2 + A \pa_r + B\right)\left(\frac{\pa_r Q}{G}\right) \nonumber \\
  &=& - \frac{\pa_r^3 Q}G + 3\frac{\pa_r^2 Q \cdot \pa_r Q r^3}{G^2} + 3\frac{(\pa_r Q)^2 r^2}{G^2} - 2\frac{(\pa_r Q)^3 r^6}{G^3} \nonumber \\
  &+& \left( -\frac{2}{r} + \frac{1}{2} r -D_2^{-1} Q\right)\left( \frac{\pa_r^2 Q}G - \frac{(\pa_r Q)^2 r^3}{G^2}\right)\nonumber  \\
  &+& \left(  \frac{2}{r^2} +1 -2Q \right) \frac{\pa_r Q}{G} - 2 \left( \frac{\pa_r^2 Q}G - \frac{(\pa_r Q)^2 r^3}{G^2}\right) r^3 \frac{\pa_r Q}{G} - 6r^2 \left(\frac{\pa_r Q}{G}\right)^2 \nonumber \\
  &=& \frac 1G \cdot \left( - \pa_r^2 +\left( -\frac{2}{r} + \frac{1}{2} r -D_2^{-1} Q\right)\pa_r +  \frac{2}{r^2} +1 -2Q  \right)(\pa_r Q) \label{eqABcond3term1} \\
  &+& \frac{\pa_r Q\cdot r^3}{G^{2}}  \cdot \left[ \pa_r^2 Q  + \left( -\frac 1r - \frac{r}{2} + D_2^{-1} Q \right) \pa_r Q \right].\label{eqABcond3term2} 
\eea
Applying \eqref{eqequationofQ1} for \eqref{eqABcond3term1} and \eqref{eqprofileQ} for \eqref{eqABcond3term2} implies
\bee
\left(-\pa_r^2 + A \pa_r + B\right)\left(\frac{\pa_r Q}{G}\right) = \frac{\pa_r Q}{G} \cdot \left( \pa_r \Delta_1^{-1} \pa_r Q - \frac 12 \right) + \frac{\pa_r Q \cdot r^3}{G^2} \cdot \left( -\frac 3r \pa_r Q + Q - Q^2 \right).
\eee
Recall that \eqref{eqDeltalintop1} leads to
\bee  \pa_r \Delta_1^{-1} \pa_r Q  = \frac 23 D_3^{-1} (\pa_r Q) + \frac 13 D_0^{-1} (\pa_r Q) 
= \frac 23 Q - \frac 2r D_2^{-1} Q + \frac 13 Q = Q - \frac 2r D_2^{-1} Q. \eee
Hence, we have
\bee
  &&\left(\frac{\pa_r Q\cdot r^3}{G^2}\right)^{-1} \cdot \left[\left(-\pa_r^2 + A \pa_r + B + 1 + \frac 2r D_2^{-1} Q\right)\left(\frac{\pa_r Q}{G}\right)  \right] \\
  &=& \frac{G}{r^3} \cdot \left( \frac 12 + Q \right) + \left( -\frac 3r \pa_r Q + Q - Q^2 \right)\\
  &=& \left( Q - \frac 3r D_2^{-1}Q \right) \cdot  \left( \frac 12 + Q \right)  -\frac 3r \pa_r Q + Q - Q^2 \\
  &=& \frac 3r \left( -\pa_r Q + \frac{r}{2} Q - \frac 12 D_2^{-1} Q - Q D_2^{-1} Q \right) = 0,
\eee
where we used the definition of $G$ from \eqref{eqGdef} and the identity \eqref{eqequationofQ2}. This is equivalent to \eqref{eqlastident} and thus we have concluded the proof. 
\end{proof}

As for the local operator $\tilde \calL_1$ obtained in Proposition \ref{propwaveop2}, it can be further conjugated into a symmetric local operator, making its positivity much easier to verify.

\begin{proposition}
\label{propcoerl1}
    We define a differential operator $\tilde \calL_1'$ with domain $C_c^\infty(\RR^+)$ by
    \be
    \tilde \calL_1' : = -\partial_r^2 + \frac{12}{r^2} + \frac{r^2}{16}- \frac{8}{2+r^2} - \frac{3}{4},
    \label{L1ttilde}
    \ee
    which is a symmetric operator on $L^2(\RR^+)$. Then $\tilde \calL_1'$ satisfies the following properties:

    \noindent $(1)$ $\tilde \calL_1'$ is the conjugation of $\tilde \calL_1$ in the following sense:
    \be
    U_1^{-1} \tilde \calL_1 U_1 g = \tilde \calL_1' g, \quad \forall\;  g \in C^2(\RR^+),
    \label{conjugationtildeL1}
    \ee
    with 
    \be
    U_1(r)
    = e^{\frac{1}{2} \int A(r) dr} 
    = \frac{e^{\frac{1}{8}r^2}}{r(2+r^2)}.
    \label{U1def}
    \ee
    where $A(r)$ comes from \eqref{eqtildecalL1AB}.

    \noindent $(2)$ $\tilde \calL_1'$ is essentially self-adjoint on $L^2(\RR^+)$. In particular, its self-adjoint closure (still denoted by $\tilde \calL_1'$ to simplify notation) has domain
    \be
    D(\tilde \calL_1' ) =  \left\{ u \in L^2(\RR^+): \tilde \calL_1' u \in L^2(\RR^+) \right\},
    \label{domainofclosure}
    \ee
    whose spectrum satisfies
    \be
    \sigma(\tilde \calL_1' ) \subset \Big[\frac{2}{5},\infty\Big).
    \label{spectrumL1tt}
    \ee
\end{proposition}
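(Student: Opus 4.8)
\textbf{Proof proposal for Proposition \ref{propcoerl1}.}

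The plan is to verify the three claims in turn, treating the conjugation identity as the computational core and the essential self-adjointness plus spectral bound as standard consequences. First I would establish $(1)$: the general fact is that for a second-order operator $-\pa_r^2 + A(r)\pa_r + B(r)$, conjugation by $U_1 = \exp(\frac12 \int A\,dr)$ removes the first-order term and produces the potential $B - \frac14 A^2 - \frac12 A'$. So I would simply compute, using $A(r) = -\frac2r + \frac r2 - D_2^{-1}Q$ from \eqref{eqtildecalL1AB}: first confirm $U_1(r) = e^{\frac18 r^2}/(r(2+r^2))$ by integrating $A$, where I use $D_2^{-1}Q = \frac{1}{r^2}\int_0^r Q s^2\,ds$; with $Q(s) = \frac{4(6+s^2)}{(2+s^2)^2}$ one computes $\int_0^r Q s^2\,ds$ explicitly and finds $\frac12 \int A\,dr = \frac18 r^2 - \log r - \log(2+r^2) + C$, giving \eqref{U1def}. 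Then I would plug $A$ and $B(r) = \frac{2}{r^2} + 1 - 2Q - 2\pa_r(g/G)$ into the formula $B - \frac14 A^2 - \frac12 A'$ and check that the nonlocal pieces (the $D_2^{-1}Q$ contributions and the $\pa_r(g/G)$ term) cancel against each other and against pieces of $-\frac14 A^2 - \frac12 A'$, leaving exactly $\frac{12}{r^2} + \frac{r^2}{16} - \frac{8}{2+r^2} - \frac34$. Here I would use the profile identities \eqref{eqequationofQ2} and the explicit formulas for $Q$, $D_2^{-1}Q$, $g = \pa_r Q$, and $G = Qr^3 - 3r^2 D_2^{-1}Q$; the constant $\frac34$ and the coefficient $\frac{1}{16}$ on $r^2$ should emerge from $\frac14 A^2$ where $A \sim \frac r2$ at infinity (so $\frac14 \cdot \frac{r^2}{4} = \frac{r^2}{16}$) and cross terms. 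Symmetry of $\tilde\calL_1'$ on $L^2(\RR^+)$ is immediate since it has no first-order term and real coefficients.

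Next, for $(2)$, essential self-adjointness: the potential $V(r) = \frac{12}{r^2} + \frac{r^2}{16} - \frac{8}{2+r^2} - \frac34$ has a limit-point singularity at $r = 0$ because the coefficient $12 \ge \frac34$ ensures the $\frac{c}{r^2}$ term is in the limit-point case there (by the standard Weyl criterion, $-\pa_r^2 + \frac{c}{r^2}$ is limit-point at $0$ iff $c \ge \frac34$), and at $r = \infty$ the potential grows like $\frac{r^2}{16} \to +\infty$, which is also limit-point. Hence by the limit-point/limit-point criterion (citing \cite{zbMATH03483022}, Theorem X.7 and the companion results already invoked in the paper), $\tilde\calL_1'$ with domain $C_c^\infty(\RR^+)$ is essentially self-adjoint, and its closure has domain $\{u \in L^2(\RR^+): \tilde\calL_1' u \in L^2(\RR^+)\}$, which is \eqref{domainofclosure}.

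For the spectral bound \eqref{spectrumL1tt}, since $\tilde\calL_1'$ is self-adjoint and bounded below it suffices to show $(\tilde\calL_1' g, g)_{L^2} \ge \frac25 \|g\|_{L^2}^2$ for $g \in C_c^\infty(\RR^+)$. I would bound the potential pointwise from below: $V(r) \ge \frac{12}{r^2} - \frac{8}{2+r^2} - \frac34$ (discarding the nonnegative $\frac{r^2}{16}$), and combine the $\frac{12}{r^2}$ term with the kinetic term using the Hardy inequality $\int_0^\infty |g'|^2\,dr \ge \frac14 \int_0^\infty \frac{|g|^2}{r^2}\,dr$ on $C_c^\infty(\RR^+)$ — keeping say a fraction $\theta$ of the kinetic energy to absorb $-\frac{8}{2+r^2}$ where that term is large near $r = 0$, since $\frac{12 + \theta/4}{r^2} \ge \frac{8}{2+r^2}$ for all $r$ once $\theta$ is chosen appropriately (indeed $\frac{12}{r^2} \ge \frac{8}{2+r^2}$ already holds for all $r > 0$ because $12(2+r^2) = 24 + 12r^2 \ge 8r^2$). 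Thus $V(r) + \frac{8}{2+r^2} - \frac{12}{r^2} \ge 0$ pointwise, so $(\tilde\calL_1' g,g) \ge \int |g'|^2 + \int(\frac{12}{r^2} - \frac{8}{2+r^2})|g|^2 - \frac34\|g\|^2 \ge -\frac34 \|g\|^2$ is too weak; instead I keep the full $-\pa_r^2$ and note $\frac{12}{r^2} - \frac{8}{2+r^2} \ge 0$, giving lower bound $-\frac34$, still insufficient. The fix is to not discard $\frac{r^2}{16}$: minimize $f(r) = \frac{12}{r^2} + \frac{r^2}{16} - \frac{8}{2+r^2} - \frac34$ over $r > 0$ directly (a one-variable calculus problem) and check the minimum exceeds $\frac25$; alternatively split off a small multiple of $-\pa_r^2 + \frac{r^2}{16}$ whose ground state energy is $\frac14\sqrt{\frac{1}{4}} \cdot 2 = \frac14$ on the half-line with Dirichlet condition, and combine with the pointwise bound on the remainder. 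I expect the main obstacle to be the conjugation computation in $(1)$ — the cancellation of all nonlocal terms is delicate and relies on using \eqref{eqprofileQ}, \eqref{eqequationofQ2}, and the explicit profile formulas in the right combination, analogous to the identity \eqref{eqlastident} verified in the proof of Proposition \ref{propwaveop2}; once the clean form \eqref{L1ttilde} is in hand, parts $(2)$ and $(3)$ are routine Weyl-criterion and Hardy-inequality arguments.
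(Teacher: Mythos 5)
Your proposal follows essentially the same route as the paper, which itself dismisses $(1)$ as "simple calculations", invokes the standard limit-point criteria for $(2)$, and obtains \eqref{spectrumL1tt} from the pointwise bound $\frac{12}{r^2}+\frac{r^2}{16}-\frac{8}{2+r^2}-\frac34\ge\frac25$. Three small corrections, though. First, your stated conjugation formula has a sign error: for $U_1=e^{\frac12\int A}$ the resulting potential is $B+\frac14A^2-\frac12A'$, not $B-\frac14A^2-\frac12A'$; your own sanity check at infinity ($A\sim\frac r2$ yielding $+\frac{r^2}{16}$) already uses the correct sign, so this is a slip, but as written the formula would not reproduce \eqref{L1ttilde}. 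Second, when you substitute $B$ from \eqref{eqtildecalL1AB}, note that the relevant term is $-2\pa_r\bigl(\frac{g}{G}r^3\bigr)$ (equivalently $-2\pa_r\bigl(\frac{\pa_rQ}{D_3^{-1}\pa_rQ}\bigr)$ as in \eqref{localLl1}); taken literally without the $r^3$ it produces a non-integrable $r^{-5}$ singularity. With these fixed, the computation closes cleanly using only $D_2^{-1}Q=\frac{4r}{2+r^2}$ and $\frac{g}{G}r^3=\frac{r^2+10}{r(2+r^2)}$ — no profile identities are actually needed for $(1)$, in contrast to the proof of Proposition \ref{propwaveop2}. Third, you leave the minimization of the potential unverified; it does hold, but only barely (the minimum, attained near $r^2\approx 10$, is about $0.408$ versus $\frac25=0.4$), so the "one-variable calculus problem" must actually be carried out rather than expected.
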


\begin{proof}[Proof of Proposition \ref{propcoerl1}] First of all, 
$\tilde \calL_1'$ is clearly symmetric on $L^2(\RR^+)$ as a Schr\"odinger operator, and $(1)$ is easily verified with simple calculations. For (2), the essential self-adjointness follows from standard theory (see e.g. \cite[Theorem X.7, X.8, X.10]{zbMATH03483022}), similar to Proposition \ref{propL2coer}. And note that
\[
\frac{12}{r^2} + \frac{r^2}{16} - \frac{8}{2+r^2} - \frac{3}{4} \ge \frac{2}{5}, \quad \forall \; r >0,
\]
one immediately concludes \eqref{spectrumL1tt}.
\end{proof}

 \subsection{Proof of Theorem \ref{thm: mode stability main}}
 \label{subseccompleteproofL}
 In this section, we are ready to prove our central result Theorem \ref{thm: mode stability main}.

\begin{proof}[Proof of Theorem \ref{thm: mode stability main}]
Recall the direct decomposition of $L^2(\RR^3)$  \eqref{directdecomofL2}. To prove Theorem \ref{thm: mode stability main}, it suffices to prove that for some $0< \tilde{\delta}  \ll 1$, 
\begin{align}
    &\sigma_{disc} \left( \calL \Big|_{H^k(\RR^3) \cap \calH_{(l)}} \right) \cap \{ z \in \CC: \Re z < \tilde{\delta}  \} = 
    \begin{cases}
        \emptyset, & l \ge 2, \\
        \{ -\frac{1}{2} \}, & l = 1, \\
        \{ -1 \}, & l=0,
    \end{cases}  \label{equnstabspec} 
\end{align}
and \eqref{equnstabspec2}. 

We will proceed for each spherical class by contradictory argument. The core is to check the regularity of the supposed unstable eigenmode, so as to apply the coercivity estimates from Section \ref{subsecl3}-\ref{sec24} to derive a contradiction.

Notice that for any eigenfunction $f \in H^k$, i.e. satisfying $\calL f = zf$, the smoothing estimate of resolvent from \cite[Lemma $2.9$]{ksnsblowup} improves the regularity of eigenfunctions to be 
\[
f \in H^\infty(\RR^3) \cap \calH_{(l)}.
\]

\mbox{}

\noindent \underline{\textbf{Case 1. $l \ge 3$.}} 

\mbox{}

Assume that there exists a nontrivial eigenfunction $f \in H^\infty(\RR^3) \cap \calH_{(l)}$ with $l \ge 3$, such that $\calL f = z f$ with $\Re z < \tilde{\delta} \ll 1 $, then we find that
\begin{align*}
    \Re (\calL f,f)_{L^2(\RR^3)}
    & = \Re z \| f\|_{L^2(\RR^3)}^2 \le \tilde {\delta}  \| f\|_{L^2(\RR^3)}^2,
\end{align*}
which contradicts to Proposition \ref{propmodstabl3} if $\tilde{\delta}  < \frac{1}{8}$.

\mbox{}

\noindent \underline{\textbf{Case 2. $l=2$.}} 

\mbox{}

\underline{\textit{Step $1$.}} Assume that  $0 \not= f \in H^\infty\cap \calH_{(2)}$ is an eigenfunction of $\calL$ with eigenvalue $z$ with $\Re z \le \delta$.
With the $L^2$ direct decomposition provided in \eqref{directdecomofL2} and invariant action of $\calL$ onto $\calH_{(l)}$ (see Lemma \ref{thmLlresonHl}), it suffices to assume that $f(x) = f_{2,m}(r) Y_{2,m}(\Omega) \in H^\infty(\RR^3) \cap \calH_{(2)}$, where $f_{2,m}$ satisfies
\[
\calL_2 f_{2,m} = z f_{2,m} \quad \text{ with } \quad  f_{2,m}  \in L^2(\RR^+; r^2 dr) \cap L^\infty(\RR^+).
\]

\underline{\textit{Step $2$.}} We claim that 
\be 0 \not\equiv q(r):= r^{0.2} D_4^{-1} f_{2,m} \in L_{r^{-2} + 1}^2(\RR^+) \cap  \dot H_{r^{-1} +r}^{1} (\RR^+) \cap \dot H^2(\RR^+),
\label{q: regularity claim}
\ee
if $0 < \tilde {\delta} < \epsilon_0$ with $\epsilon_0 \ll 1$ given in Proposition \ref{propL2coer}. 

In fact, for $r \in (0,1)$, applying Lemma \ref{lemflmrg} with $l=2$ yields that
\begin{align}
 |\partial_r^i (D_4^{-1} f_{2,m})| \lesssim r^{3-i}, \quad 0 \le i \le 2, \label{regugl2v} 
\end{align}
In addition, when $r \in (1,\infty)$, by Lemma \ref{lemmadecayatinifty}, 
\begin{align*}
& |\partial_r^i (D_4^{-1} f_{2,m})| \lesssim r^{3 \tilde{\delta} -1 -i}, \quad i =0,1, \\
& \Big|\partial_r^2 \big(D_4^{-1} f_{2,m}\big) \Big| 
	\lesssim r^{-3 + 3 \tilde{\delta}} + |\partial_r f_{2,m}|,
\end{align*}
where $\partial_r f_{2,m} \in L^2\left( [1,+\infty); r^{2}dr \right)$ from Lemma \ref{lemflmL2integrable}. Thus combining all of the arguments above yields \eqref{q: regularity claim} and we have concluded the claim.

\underline{\textit{Step $3$.}} From \textit{Step 2}, there exists a sequence $\{ q_n \} \subset C_c^\infty(\RR^+)$ such that
\be
q_n \to q \text{ in } L_{r^{-2} + 1}^2(\RR^+) \cap  \dot H_{r^{-1} +r}^{1} (\RR^+) \cap \dot H^2(\RR^+). 
\label{l=2: approx sequence}
\ee
Recall $\tilde \calL_{l, \a}$ from \eqref{tildeLlapha} with $l = 2, \a = 0.2$, we claim that 
\be
(\tilde \calL_{l,\alpha} q_n,q_n)_{L^2(\RR^+)} \to (\tilde \calL_{l,\alpha} q,q)_{L^2(\RR^+)}, \quad \text{ as } n \to \infty. \label{eqconvtildecalLqn}
\ee
Then we can deduce by Proposition \ref{propL2coer} that
\[
\Re \left( \tilde\calL_{l,\alpha} q, q \right)_{L^2} = \lim_{n \to\infty} \Re \left( \tilde\calL_{l,\alpha} q_n, q_n \right)_{L^2}
\ge \lim_{n \to \infty} \epsilon_0 \| q_n \|_{L^2}^2  = \epsilon_0 \| q \|_{L^2}^2,
\]
which contradicts to our assumption if $\tilde{\delta} < \epsilon_0$.

Thus it remains to prove \eqref{eqconvtildecalLqn}. In fact, once we expend $\tilde \calL_{l, \a}$, we find that the convergence for local terms under \eqref{l=2: approx sequence} are obvious. It suffices to check the following bound for nonlocal term  $T_2$ (see \eqref{Tl: def} with $l=2$):
\be
\| r^{0.2} T_2 r^{-0.2} g \|_{L^2(\RR^+)} \lesssim \| g \|_{L^2(\RR^+)}.
\label{l=2: convergence of nonlocal term}
\ee 
In fact, by Cauchy Schwarz inequality, we find the pointwise estimate
\begin{align*}
    & \quad \Big| r^{0.2} D_4^{-1} \left(V_1 r^{-0.2} g \right) \Big| 
    = \Bigg| r^{-3.8} \int_0^r \frac{8 s^{4.8}}{(s^2 +2)^2}g(s)ds \Bigg| \\
    & = r^{-3.8} \left( \int_0^r  \frac{64 r^{9.6}}{(s^2 +2)^4} ds \right)^\frac{1}{2} \| g \|_{L^2} \lesssim \begin{cases}
         r^{1.5} \| g\|_{L^2}, & r \le 1, \\
         r^{-2.5} \|g \|_{L^2}, & r \ge 1,
    \end{cases}
\end{align*}
and similarly, 
\begin{align*}
     & \Big| V_2 D_{-2}^{-1} (r^{-0.2} g) \Big|
     \lesssim \frac{r^2}{\la r \ra^4}  \left( \int_r^\infty s^{-4.4} ds \right)^\frac{1}{2}  
 \| g \|_{L^2} \lesssim \frac{ r^{0.3}}{\la r \ra^4} \| g \|_{L^2},\\
    &  \Big| r^{0.2} D_4^{-1} \left( V_2 D_{-2}^{-1} \left(r^{-0.2} g\right) \right) \Big|
    \lesssim  r^{-3.8} \left( \int_0^r \frac{s^{4.3}}{\la s \ra^4} ds \right) \| g\|_{L^2} 
     \lesssim \begin{cases}
         r^{1.5} \| g\|_{L^2}, & r \le 1, \\
         r^{-2.5} \| g \|_{L^2},  & r \ge 1.
    \end{cases}
\end{align*}
The estimates above immediately yield \eqref{l=2: convergence of nonlocal term} and therefore conclude \eqref{eqconvtildecalLqn}.

\mbox{}

\noindent \underline{\textbf{Case 3. $l=1$.}} 

\mbox{}

\underline{\textit{Step $1$.}} We assume that there exists a nontrivial solution
\be
f \in \Big( H^k(\RR^3) \cap \calH_{(1)} \setminus \text{span}\{ \partial_{x_1} Q, \partial_{x_2} Q, \partial_{x_3} Q \} \Big) \bigcap \left( \bigcup_{n>0} \ker \left( \calL -z \right)^n \right),
\label{L1contraass}
\ee
with $\Re z < \tilde{\delta}$. Then we can find $n_0 \ge 1$ such that
\[
(\calL -z)^{n} f \not =0, \quad \forall \;   0 \le n <n_0, \quad \text{ and } \quad (\calL -z)^{n_0} f =0, 
\]
Similar to the discussion for eigenfunctions, by \cite[Lemma $2.9$]{ksnsblowup}, we have $f(x) = f_{1,m}(r) Y_{1,m}(\Omega) \in H^\infty(\RR^3) \cap \calH_{(1)}$. In addition, the eigen equation further implies $\calL^n f \in H^\infty(\RR^3)$ for any $n \in \ZZ_{\ge 0}$, and therefore $\calL^n f \in \calD_1$ from Proposition \ref{propwaveop2} for any $n \in \ZZ_{\ge 0}$.

Therefore, the function $q := T f_{1, m} \in H^\infty_{loc}(\RR^+)$ and the commutator formula \eqref{eqwaveopcom} indicates  
\be
\left( \tilde \calL_1 -z \right)^{n_0} q = \left( \tilde \calL_1 -z \right)^{n_0} Tf_{1,m} = T(\calL_1 -z)^{n_0} f_{1,m} =0, {\quad {\rm pointwisely \,\, on\,\,}\RR^+}, \label{eqeigeneqq}
\ee
for $n_0 \ge 1$ determined previously.

\underline{\textit{Step $2$.}} 
We claim that for any $G(x) =G_{1,m}(r) Y_{1,m}(\Omega) \in H^\infty(\RR^3) \cap \calH_{(1)}$, if we define $q_G := T G_{1,m}$, then
\be q_G\in  L_{1+ r^{-2}}^2(\RR^+) \cap \dot H^2(\RR^+). \label{eqregq}\ee
In fact, from the explicit formula of $Q$ \eqref{profile}, we can compute the formula of $T$ in \eqref{waveop2} as
\[ T[G_{1,m}] = G_{1,m} -  \frac{r^2+10}{r^4(r^2+2)}\int_0^r G_{1,m} s^3 ds.
\]

We first consider $r \in [0, 1]$, where we notice the following cancellation near $0$:
\begin{align*}
	T[r] =r - \frac{r^2+10}{r^4(r^2+2)} \int_0^r s^4 ds = r- \frac{1}{5} \cdot \frac{r(r^2+10)}{r^2+2} = -\frac{4r^3}{5(r^2+2)}.
\end{align*}
Then, applying the decomposition \eqref{flmdecayeq1} with $l = 1$ for $G_{1, m}$ with $r \in [0, 1]$, we control
\begin{align*}
	& |Th_{1,m}(r)| = \Big|  h_{1,m}(r) - \frac{r^2+10}{r^4(r^2+2)} \int_0^r h_{1,m}(s) s^3 ds \Big|  \lesssim  r^2+ \frac{1}{r^4} \int_0^r s^5 ds \lesssim r^2, \\
	& |\partial_r Th_{1,m}(r)|
	\lesssim |\partial_r h_{1,m}(r)| + \frac{1}{r^5} \int_0^r s^5 ds + \frac{1}{r} |h_{1,m}| \lesssim r, \\
	& |\partial_r^2 Th_{1,m}(r)| \lesssim |\partial_r^2 h_{1,m}(r)|
	+  \frac{1}{r^6} \int_0^r s^5 ds + \frac{1}{r^2} |h_{1,m}|+ \frac{1}{r} |\partial_r h_{1,m}| \lesssim 1,
\end{align*}
where $h_{1,m}$ is the residual term from \eqref{flmdecayeq1} and we have used \eqref{flmdecayeq2}. Hence for $j = 0, 1, 2$,\footnote{Here we exploited that $Tf(r) = (T f|_{[0,r]})(r)$, so that the decomposition \eqref{flmdecayeq1} on $[0,1]$ propagates to $Tf \big|_{[0, 1]}$.}
\be 
|\partial_r^jq_G(r) |
= |\partial_r^jT G_{1,m}(r)| = \Big| A_{1,m} \partial_r^j T [r] + \partial_r^j Th_{1,m} \Big| \lesssim r^{2-j}, \quad \text{ when } r\in (0,1).
\label{Tf1mregu}
\ee
When it is away from the origin with $r \ge 1$, we estimate
\begin{align*}
	& |q_G(r)| =  \Big|  G_{1,m}(r) - \frac{r^2+10}{r^4(r^2+2)} \int_0^r G_{1,m}(s) s^3 ds \Big|
	  \lesssim |G_{1,m}(r)| + \frac{1}{r^4} \| G_{1,m} \|_{L^2(\RR^+;r^2 dr)} \left( \int_0^r s^4 ds\right)^\frac{1}{2} \\
	& \; \qquad \lesssim   |G_{1,m}(r)| + r^{-1.5}  \| G_{1,m} \|_{L^2(\RR^+;r^2 dr)} \lesssim |G_{1,m}(r)|  + r^{-1.5}, \\
	& |\partial_r q_G(r)| \lesssim |\partial_r G_{1,m}(r)| + \frac{1}{r^5}
	\int_0^r |G_{1,m}(s)| s^3 ds + \frac{|G_{1,m}|}{r} \lesssim 
	|\partial_r G_{1,m}| + \frac{|G_{1,m}|}{r} + r^{-2.5}, \\
	& |\partial_r^2 q_G(r)| \lesssim |\partial_r^2 G_{1,m}(r)| + \frac{|\partial_r G_{1,m}|}{r} + \frac{|G_{1,m}|}{r^2}  + \frac{1}{r^6}  \int_0^r |G_{1,m}(s)| s^3 ds \\
	& \; \quad \qquad 
	\lesssim |\partial_r^2 G_{1,m}(r)| + \frac{|\partial_r G_{1,m}|}{r} + \frac{|G_{1,m}|}{r^2} + r^{-3.5}.
\end{align*}
Consequently, combining all of the estimates above and Lemma \ref{lemflmL2integrable} deduces \eqref{eqregq}.

In particular, with the claim constructed above, if we apply this result with $G = \calL^n f = \calL_1^n f_{1,m}(r) Y_{1,m} (\Omega) \in H^\infty(\RR^3) \cap \calH_{(1)}$ for any $n \ge 0$, then we find that
\be
T [\calL_1^n f_{1,m}] = T [(\calL^n f)_{1,m}] \in L_{1+ r^{-2}}^2(\RR^+) \cap \dot H^2(\RR^+), \quad \forall \;  n \ge 0.
\label{l=1: gener eigen}
\ee

\underline{\textit{Step $3$.}} As for $U_1(r)= \frac{e^{\frac{1}{8}r^2}}{r(2+r^2)}$ defined in \eqref{U1def}, we claim that the function $0 \not= p :=U_1^{-1} q$ satisfies 
\[
\tilde \calL_1'^n p \in D(\calL_1') \text{ for any } n \ge 0  \text{ and } \left( \tilde \calL_1' -z \right)^{n_0} p =0,
\]
with $D(\calL_1')$ defined in \eqref{domainofclosure}. In fact, from the commutator relations \eqref{eqwaveopcom} and \eqref{conjugationtildeL1} together with \eqref{l=1: gener eigen} and that $\la r \ra^2 \pa_r^s U_1^{-1} \in L^\infty$ for any $s \ge 0$, we find that
\begin{align*}
    \tilde \calL_1'^n p & = \tilde\calL_1'^n U_1^{-1} Tf_{1,m} 
= U_1^{-1} \tilde \calL_1^n Tf_{1,m} 
= U_1^{-1} T \calL_{1}^n f_{1,m} \\
&= U_1^{-1} T[\calL_1^n f_{1,m}] \in L_{r^2 + r^{-2}}^2(\RR^+) \cap \dot H^2(\RR^+), \quad \forall \;  n \ge 0.
\end{align*}
which, yields that $\tilde \calL_1'^n p \in D(\tilde \calL_1')$ for any $n \ge 0$. 

In addition, together with \eqref{conjugationtildeL1} and \eqref{eqeigeneqq},
\[
\left( \tilde \calL_1' -z \right)^{n_0} p = 
\left( \tilde \calL_1' -z \right)^{n_0} U_1^{-1} q 
= U_1^{-1} (\tilde \calL_1 -z)^{n_0} q =0, \quad \text{ pointwisely on $\RR^+$},
\]
Hence, combining the above arguments yields the claim.

\underline{\textit{Step $4$.}} With the claim given in \textit{Step 3}, we find a generalized eigenfunction of $\tilde \calL_1' : D(\tilde \calL_1' ) \subset L^2(\RR^+) \to L^2(\RR^+)$ with respect to eigenvalue $z$, which means that $z \in \sigma(\tilde \calL_1' ) \subset \Big[\frac{2}{5},\infty\Big)$ (see \eqref{spectrumL1tt}). This leads to a contradiction if $\tilde {\delta} < \frac{2}{5}$.

\mbox{}

\noindent \underline{\textbf{Case 4. $l = 0$.}} This was verified in \cite[Theorem 4.1]{MR4685953}. 

\mbox{}

In summary, we have concluded the proof of Theorem \ref{thm: mode stability main} for some $0 < \tilde{\delta} \ll 1$.
\end{proof}

\subsection{Linear theory of $-\calL$}\label{sec26} As the end of Section \ref{sectionmodesta}, we summarize the spectral property of the linearized operator $-\calL$ as follows, which is deduced by the combination of Theorem \ref{thm: mode stability main} with the abstract result \cite[Proposition $2.7$, Proposition $2.8$, Corollary $2.13$]{ksnsblowup}.

\begin{proposition}[Linear theory of $-\calL$]
\label{proposition: spectral properties of L}
    For $k \ge 0$ and $\calL: D\left(\calL\Big|_{H^k}\right) \subset H^k(\RR^3;\CC) \to H^k(\RR^3;\CC)$ given in \eqref{eqdefcalL}, there exists $\delta_g \in \left(0, \min\{ \tilde{\delta}, \frac{1}{16} \} \right)$, such that followings hold:

    \noindent $(1)$ Unstable eigenmodes. The set $-\Lambda_\calL := \sigma(-\calL) \cap \{ z\in \CC: \Re z > - \frac{\delg}{2} \} \subset \sigma_{disc}(-\calL)$. Precisely,
\be
   -\Lambda_\calL = \Big\{ 1, \frac{1}{2} \Big\}.
   \label{spectralofL}
\ee
Moreover, the real-valued functions $\{ \varphi_j \}_{j=0}^3 \subset H^\infty(\RR^3)$ defined by
\be
\varphi_0 = \frac{\Lambda Q}{\| \Lambda Q \|_{H^k}} \text{ and }
\varphi_j = \frac{\partial_{x_j} Q}{\| \partial_{x_j} Q \|_{H^k}}, \text{ with } 1\le j \le 3,
\label{baseofunstable}
\ee
form the basis of the unstable eigensubspace as \eqref{equnstabspec2}.

\mbox{}

\noindent $(2)$ Riesz projection $P_u$ onto unstable eigenspace. Define the spectral projection of $-\calL$ to the set $-\Lambda_\calL$:
\be
P_u = \frac{1}{2\pi i}\int_\Gamma (z - (-\calL))^{-1} dz, \quad \text{ and } P_s = \text{Id} - P_u,
\label{Pu: definition}
\ee
where $\Gamma \subset \rho(-\calL)$ is an arbitrary anti-clockwise contour containing $-\Lambda_\calL$, then $V = \text{Ran}\; P_u$. In addition, there exists $\{\psi_j\}_{j=0}^3 \subset H^k(\RR^3;\RR)$ such that
\be
(\varphi_i, \psi_j)_{H^k} = \delta_{ij}, \quad  \text{ and } \quad  P_u f = \sum_{i=0}^3 (f,\psi_j)_{H^k} \varphi_j, \quad \forall  f\in D(\calL).
\label{Puinnerproduct}
\ee
Consequently, for any $k' \ge k$, $P_u: H^k \to H^{k'}$, $P_s: H^{k'} \to H^{k'}$ are bounded. 

\mbox{}

\noindent $(3)$ Direct decomposition of $H^k(\RR^3;\RR)$. The spaces
\be
H_u^k = \text{Ran} \left( P_u\Big|_{H^k(\RR^3;\RR)} \right), \quad \text{ and } \quad H_s^k :=\text{Ran} \left( P_s \Big|_{H^k(\RR^3;\RR)} \right),
\label{defHuHs}
\ee
 are both invariant under $-\calL$. In addition, $H^k(\RR^3;\RR)$ follows the direct sum decomposition  
 \[H^k(\RR^3;\RR) = H_u^k \oplus H_s^k.
 \]

\mbox{}

 \noindent $(4)$ Growth rate of unstable part. There exists an inner product on $H_u^k$, which we denote by $(\cdot, \cdot)_{\tilde B}$ such that
\be
(-\calL f,f)_{\tilde B} \ge - \frac{6 \delta_g}{10} \| f\|_{\tilde B}^2, \quad \forall f \in H_u^k.
\label{normunstable}
\ee
Moreover, $(\cdot , \cdot)_{\tilde B}$ induces a norm $\| \cdot \|_{\tilde B}$ on $H_u^k$ and $\| \cdot \|_{\tilde B} \sim \| \cdot \|_{H^n}$ for any $n \ge 0$.

\mbox{}

\noindent $(5)$ Semigroup decay of stable part. $-\calL$ generates a semigroup, which we denote by $e^{-\tau \calL}$, and it satisfies
\be
\| e^{-\tau \calL} v\|_{H^k} \lesssim e^{-\frac{1}{2} \delg \tau} \| v \|_{H^k}, \quad \forall \; v \in H_s^k,\quad \tau \ge 0.
\label{decay rate: stable part}
\ee
    
\end{proposition}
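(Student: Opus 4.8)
The plan is to obtain Proposition~\ref{proposition: spectral properties of L} as a direct corollary, feeding the mode-stability count of Theorem~\ref{thm: mode stability main} into the abstract semigroup framework already built in \cite{ksnsblowup}. First I would recall from \cite{ksnsblowup} the structural facts about $\calL$ on each $H^k(\RR^3)$, $k\ge 0$: it is closed, $-\calL$ generates a strongly continuous semigroup, the nonlocal lower-order terms in \eqref{eqdefcalL} are relatively compact perturbations of the principal part $-\Delta+\frac12\Lambda$, so that $\sigma_{ess}(-\calL)$ is trapped in a fixed left half-plane $\{\Re z\le -c_0\}$ with $c_0>0$, and the resolvent is smoothing (\cite[Lemma~2.9]{ksnsblowup}), whence every point of $\sigma(-\calL)\cap\{\Re z>-c_0\}$ is an isolated eigenvalue of finite algebraic multiplicity with $H^\infty$ eigenfunctions. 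Fixing $\delta_g\in(0,\min\{\tilde\delta,\tfrac1{16}\})$ with $\delta_g<c_0$, it follows that $\sigma(-\calL)\cap\{\Re z>-\delta_g/2\}\subset\sigma_{disc}(-\calL)$, and Theorem~\ref{thm: mode stability main} then pins this set down as exactly $\{1,\tfrac12\}$, with eigenspaces $\text{span}\{\Lambda Q\}$ and $\text{span}\{\partial_{x_1}Q,\partial_{x_2}Q,\partial_{x_3}Q\}$ by \eqref{equnstabspec2}--\eqref{eqexplicitunstablemode}, and, crucially, with all generalized eigenspaces equal to the eigenspaces (no Jordan blocks). Normalizing gives part~(1) and the basis $\{\varphi_j\}_{j=0}^3$ in \eqref{baseofunstable}.

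For part~(2), since $-\Lambda_\calL=\{1,\tfrac12\}$ is a finite spectral set of semisimple eigenvalues, the Riesz projection $P_u$ in \eqref{Pu: definition} is a well-defined bounded projection of rank $4$, independent of the admissible contour, commuting with $-\calL$, with $\text{Ran}\,P_u=V$. The functionals $\psi_j$ arise as the dual basis: $\text{Ran}\,P_u^*$ is $4$-dimensional, spanned by eigenfunctions of $\calL^*$, and I would pick $\psi_j\in\text{Ran}\,P_u^*$ with $(\varphi_i,\psi_j)_{H^k}=\delta_{ij}$, which forces $P_uf=\sum_j(f,\psi_j)_{H^k}\varphi_j$. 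Since $\text{Ran}\,P_u$ is spanned by the $H^\infty$ functions $\varphi_j$ and the coefficient functionals extend continuously, $P_u\colon H^k\to H^{k'}$ is bounded for $k'\ge k$, hence so is $P_s=\text{Id}-P_u$ on each $H^{k'}$. Part~(3) is immediate: $\Lambda Q$ and $\partial_{x_j}Q$ are real-valued, so $P_u$ preserves $H^k(\RR^3;\RR)$, and $H^k_u\oplus H^k_s$ in \eqref{defHuHs} is the topological direct sum induced by the complementary projections, both invariant under $-\calL$ because they commute with it.

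For part~(4), on the finite-dimensional invariant space $H^k_u$ the operator $-\calL$ acts as a diagonalizable matrix with eigenvalues $1$ and $\tfrac12$, so I would take $(\cdot,\cdot)_{\tilde B}$ to be the inner product making $\{\varphi_j\}_{j=0}^3$ orthonormal; then for $f=\sum_{j=0}^3 c_j\varphi_j$ one has $(-\calL f,f)_{\tilde B}=|c_0|^2+\tfrac12\sum_{j=1}^3|c_j|^2\ge\tfrac12\|f\|_{\tilde B}^2\ge -\tfrac{6\delta_g}{10}\|f\|_{\tilde B}^2$ using $\delta_g<\tfrac1{16}$, and norm equivalence $\|\cdot\|_{\tilde B}\sim\|\cdot\|_{H^n}$ is automatic in finite dimension. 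For part~(5), $-\calL$ restricted to $H^k_s$ has spectrum in $\{\Re z\le-\delta_g\}$ (indeed in $\{\Re z\le-\tilde\delta\}\cup\sigma_{ess}(-\calL)$), so the abstract semigroup / spectral-mapping estimates of \cite{ksnsblowup} yield $\|e^{-\tau\calL}v\|_{H^k}\lesssim e^{-\frac12\delta_g\tau}\|v\|_{H^k}$ for $v\in H^k_s$; this is the content of \cite[Proposition~2.7, Proposition~2.8, Corollary~2.13]{ksnsblowup} once Theorem~\ref{thm: mode stability main} supplies the missing spectral input.

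Overall this is essentially bookkeeping with no serious analytic obstacle, since the hard work is Theorem~\ref{thm: mode stability main}. The one point I would treat carefully is the \emph{spectral gap}: confirming that nothing in $\sigma(-\calL)$ other than $\{1,\tfrac12\}$ enters $\{\Re z>-\delta_g/2\}$. This rests on the relative compactness of the nonlocal terms and the resulting localization of $\sigma_{ess}(-\calL)$ from \cite{ksnsblowup}, together with the fact that Theorem~\ref{thm: mode stability main} exhausts $\sigma_{disc}(-\calL)$ in that strip over all spherical classes simultaneously; I would also double-check that $\delta_g$ is chosen small enough relative to $\tilde\delta$, $\tfrac1{16}$, and the essential-spectral bound $c_0$ so that the constants in \eqref{normunstable} and \eqref{decay rate: stable part} are consistent.
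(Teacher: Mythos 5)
Your proposal is correct and follows essentially the same route as the paper, which deduces Proposition \ref{proposition: spectral properties of L} directly by combining Theorem \ref{thm: mode stability main} with the abstract semigroup results of \cite[Proposition 2.7, Proposition 2.8, Corollary 2.13]{ksnsblowup}. The additional details you supply (Riesz projection, dual basis, the explicit $\tilde B$ inner product on the semisimple four-dimensional unstable space, and the spectral-gap check via the essential-spectrum localization and resolvent smoothing from \cite{ksnsblowup}) are exactly the content of those cited abstract results, so there is no gap.
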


\section{Nonlinear stability}
\label{sectionnonsta}

In this section, we come back to the nonlinear equation \eqref{linearizedeq} in self-similar coordinates and prove Theorem \ref{thm: stability of ss solu}, i.e. the non-radial nonlinear stability of $Q$ in $H^2(\RR^3)$.

\subsection{Finite codimensional stability.}
\label{subsfinitecodista}

Ensured by Proposition \ref{proposition: spectral properties of L} with $k = 1$, if we project the equation \eqref{linearizedeq} onto stable and unstable subspaces respectively via the Reisz projections $P_s$ and $P_u$ defined in \eqref{Pu: definition}, then the variables $(\eps, \epu) = (P_s \ep, P_u \ep)$ solve the system
\be
\begin{cases}
    \partial_\tau \eps = -\calL \eps + P_s N(\ep), \\
    \partial_\tau \epu = -\calL \epu + P_u N(\ep).
\end{cases}
\label{linearizedeqV2}
\ee
where $\calL$, $N(\ep)$ are defined in \eqref{eqdefcalL}, \eqref{nonlinearterm} respectively.

\begin{proposition}[Bootstrap]
As for $\delta_g>0, P_s, P_u, H_s^1, H_u^1$ determined by Proposition \ref{proposition: spectral properties of L} with $k = 1$, there exist $0 \le \delta_i \ll 1 \; (0 \le i \le 3)$ with
\be
\delta_0 \ll \delta_3 \ll \delta_1 \ll \delta_2 \ll \delta_3^\frac{1}{2} \ll \delta_g \ll 1,
\label{bootstrapcoeffi}
\ee
\label{propbootstrap}
such that for any initial datum $\ep_{s}(0) =\ep_{s0} \in H_s^1 \cap H^2$ satisfying
\be
\| \ep_{s0} \|_{H^2} \le \delta_0, 
\label{bootstrapinitial}
\ee
there exists $\ep_{u0} \in H_u^1$ (we denote it by $\Phi(\ep_{s0}):= \ep_{u0}$ in later discussion) such that the solution to \eqref{eqrenormalized} globally exists and satisfies the followings for all $\tau \ge 0$:

\noindent
$\bullet$ (Control of the stable part $\eps$)
\be
\| \eps \|_{H^1} < \delta_1 e^{-\frac{1}{2} \dg \tau},
\label{bootstrapstaH3}
\ee

\noindent
 $\bullet$  (Control of the higher regularity of the stable part $\eps$)
\be
\| \eps \|_{\dot H^2} < \delta_2 e^{-\frac{1}{2} \dg \tau},
\label{bootstrapstaH5}
\ee

\noindent
 $\bullet$  (Control of the unstable part $\epu$)
\be
\| \epu \|_{\tilde B} < \delta_3 e^{-\frac{7}{10} \dg \tau}.
\label{bootstrapuns}
\ee
\end{proposition}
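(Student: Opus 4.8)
The plan is a trapping/bootstrap argument closed by a topological (Brouwer) selection of the unstable data, which is by now classical for blow-up constructions; here the only structural feature to watch is the nonlocal quadratic nonlinearity $N(\ep)=\nabla\cdot(\ep\nabla\Delta^{-1}\ep)$ in \eqref{nonlinearterm}. First I would record local well-posedness of the renormalized equation \eqref{eqrenormalized} in $H^2(\RR^3)$ together with the blow-up alternative (the solution extends as long as $\|\ep(\tau)\|_{H^2}$ stays finite): the linear part $\pa_\tau-\Delta+\tfrac12\Lambda$ generates an analytic semigroup, and $N$ is locally Lipschitz on $H^2$ thanks to $H^2\hookrightarrow L^\infty$, so this is standard semilinear parabolic theory (and already available in \cite{ksnsblowup}). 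Throughout I work with the splitting $(\eps,\epu)=(P_s\ep,P_u\ep)$ from \eqref{linearizedeqV2} using the $k=1$ projections of Proposition \ref{proposition: spectral properties of L}; note that by \eqref{equnstabspec2}--\eqref{spectralofL} the space $H_u^1=\mathrm{span}\{\Lambda Q,\pa_{x_1}Q,\pa_{x_2}Q,\pa_{x_3}Q\}\subset H^\infty$ is $4$-dimensional, so the eventual topological argument takes place in $\RR^4$. The quantities needed from the nonlinearity are the two routine bounds $\|N(\ep)\|_{L^2}+\|N(\ep)\|_{H^1}\lesssim\|\ep\|_{H^2}^2$, obtained from the decomposition $N(\ep)=\nabla\ep\cdot\nabla\Delta^{-1}\ep+\ep^2$, the Calderón--Zygmund bound for $\nabla^2\Delta^{-1}$, and $\|\nabla\Delta^{-1}\ep\|_{L^\infty}\lesssim\|\ep\|_{L^2}+\|\ep\|_{L^\infty}\lesssim\|\ep\|_{H^2}$ (the latter by splitting the Newtonian kernel near and far, since neither $\ep\in H^2(\RR^3)$ nor $Q$ is integrable).

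The core is to propagate \eqref{bootstrapstaH3}--\eqref{bootstrapuns} for a suitable $\ep_{u0}$. Let $\tau^{*}$ be the exit time of the trapped region $\{\eqref{bootstrapstaH3},\eqref{bootstrapstaH5},\eqref{bootstrapuns}\ \text{hold}\}$. On $[0,\tau^{*})$ I close the two stable bounds \emph{strictly}. For \eqref{bootstrapstaH3}, Duhamel on the $\eps$-equation together with the decay $\|e^{-\tau\calL}v\|_{H^1}\lesssim e^{-\frac12\dg\tau}\|v\|_{H^1}$ on $H_s^1$ (item~(5)) and $\|P_sN(\ep)\|_{H^1}\lesssim\|\ep\|_{H^2}^2\lesssim\delta_2^2e^{-\dg\tau}$ gives $\|\eps(\tau)\|_{H^1}\lesssim(\delta_0+\dg^{-1}\delta_2^2)e^{-\frac12\dg\tau}$, which beats $\delta_1e^{-\frac12\dg\tau}$ under the hierarchy \eqref{bootstrapcoeffi}. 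For \eqref{bootstrapstaH5}, the same Duhamel together with the parabolic smoothing $\|e^{-\tau\calL}v\|_{H^2}\lesssim(1+\tau^{-1/2})e^{-\frac12\dg\tau}\|v\|_{H^1}$ on $H_s^1$ (analyticity plus the spectral gap, as in \cite{ksnsblowup}) yields $\|\eps(\tau)\|_{\dot H^2}\lesssim(\delta_0+\dg^{-1}\delta_2^2)e^{-\frac12\dg\tau}$; since $\delta_2\ll\dg$ and $\delta_0\ll\delta_2$ by \eqref{bootstrapcoeffi}, this is a strict improvement over $\delta_2e^{-\frac12\dg\tau}$. (Alternatively one may run a direct $\dot H^2$ energy estimate, integrating by parts in the conservative top-order term $\int(\nabla\Delta^{-1}\eps)\cdot\nabla(|\nabla^2\eps|^2)=-\int\eps\,|\nabla^2\eps|^2$, so that it is absorbed by the dissipation.) Consequently the stable bounds are never the first to be saturated, and if $\tau^{*}<\infty$ then \eqref{bootstrapuns} is saturated at $\tau^{*}$.

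For the unstable direction I would track $h(\tau):=\|\epu(\tau)\|_{\tilde B}^2 e^{\frac{7}{5}\dg\tau}$. Using $(-\calL f,f)_{\tilde B}\ge-\tfrac{6}{10}\dg\|f\|_{\tilde B}^2$ (item~(4)) and $\|P_uN(\ep)\|_{\tilde B}\lesssim\|N(\ep)\|_{L^2}\lesssim\|\ep\|_{H^2}^2\lesssim\delta_2^2 e^{-\dg\tau}$, one gets $h'(\tau)\ge e^{\frac{7}{5}\dg\tau}\bigl(\tfrac{\dg}{5}\|\epu\|_{\tilde B}^2-2\|P_uN(\ep)\|_{\tilde B}\|\epu\|_{\tilde B}\bigr)$, so at any time where $\|\epu\|_{\tilde B}=\delta_3 e^{-\frac{7}{10}\dg\tau}$ one has $h'>0$ provided $\delta_2^2\ll\dg\delta_3$; this is precisely the role of the exponent $\tfrac{7}{10}$ in \eqref{bootstrapuns}, taken strictly larger than $\tfrac12$ so that the quadratic error $\sim\delta_2^2 e^{-\frac{17}{10}\dg\tau}$ is beaten by the linear margin $\sim\dg\,\delta_3^2 e^{-\frac{14}{10}\dg\tau}$. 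Hence the boundary $\{\|\epu\|_{\tilde B}=\delta_3 e^{-\frac{7}{10}\dg\tau}\}$ is transversally outgoing; combined with continuous dependence on the datum, the exit time $\tau^{*}(\ep_{u0})$ depends continuously on $\ep_{u0}$ in the closed ball $\{\ep_{u0}\in H_u^1:\|\ep_{u0}\|_{\tilde B}\le\delta_3\}$ and vanishes on its boundary sphere. If $\tau^{*}(\ep_{u0})<\infty$ for every $\ep_{u0}$, then $\ep_{u0}\mapsto \delta_3^{-1}e^{\frac{7}{10}\dg\tau^{*}}\epu(\tau^{*})$ would be a continuous retraction of this $4$-ball onto its boundary sphere, which is impossible; therefore some $\ep_{u0}=:\Phi(\ep_{s0})\in H_u^1$ has $\tau^{*}=\infty$. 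For this datum all of \eqref{bootstrapstaH3}--\eqref{bootstrapuns} hold on $[0,\infty)$, in particular $\|\ep(\tau)\|_{H^2}\lesssim\delta_2$ stays small, so by the blow-up alternative the solution is global, which completes the proof.

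The main obstacle is organizational rather than a single deep estimate: one must (i) establish, beyond the statements of Proposition \ref{proposition: spectral properties of L}, the parabolic-smoothing bound for $e^{-\tau\calL}$ on $H_s$ that lets the top-regularity Duhamel integral close at the sharp rate $e^{-\frac12\dg\tau}$ without losing a derivative from $N(\ep)$, and (ii) fix the hierarchy \eqref{bootstrapcoeffi} so that simultaneously every stable inequality improves strictly and the transversality $h'>0$ survives the quadratic error — the tension being the competing smallness requests $\delta_2\ll\dg$, $\delta_0\ll\delta_1\ll\delta_2$, and $\delta_2^2\ll\dg\delta_3$. The remaining pieces — the nonlinear estimates for the nonlocal term (including the mild integrability issue for $\nabla\Delta^{-1}\ep$ with $\ep\in H^2(\RR^3)$) and the no-retraction argument — are routine.
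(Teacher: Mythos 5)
Your proposal is correct and follows the paper's strategy almost step for step: Duhamel plus the stable semigroup decay for the $H^1$ bound, a separate mechanism for the $\dot H^2$ bound, and the outgoing-flux property combined with a topological (Brouwer/no-retraction) argument in the four-dimensional space $H^1_u$ to select $\ep_{u0}$; the hierarchy requirements you extract ($\delta_0\ll\delta_1$, $\delta_2^2\ll\delta_g\delta_1$, $\delta_2^2\ll\delta_g\delta_3$) are exactly the ones \eqref{bootstrapcoeffi} is designed to satisfy, and your computation of the transversality constant $\frac{\delta_g}{5}$ matches \eqref{outgoing flux}.

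The one point where your primary route departs from the paper is the $\dot H^2$ step. You propose to close it by Duhamel using a parabolic smoothing bound $\| e^{-\tau\calL}v\|_{H^2}\lesssim(1+\tau^{-1/2})e^{-\frac12\delta_g\tau}\| v\|_{H^1}$ on $H^1_s$; as you note yourself, this is not among the statements of Proposition \ref{proposition: spectral properties of L} (item (5) only gives same-regularity decay), so it would have to be proved separately (short-time analytic smoothing for $\calL$, which is a relatively bounded first-order perturbation of $-\Delta+\frac12\Lambda$, spliced with the long-time decay on $\operatorname{Ran}P_s$). The paper avoids this entirely by running a direct $\dot H^2$ energy estimate: the derivative loss in $\|N(\ep)\|_{H^2}\lesssim\|\ep\|_{H^3}^2$ is absorbed into the dissipation $-\frac78\|\eps\|_{\dot H^3}^2$ using the smallness prefactor $\|\eps\|_{\dot H^2}\lesssim\delta_2$, followed by Gronwall (with a minor technical step propagating the estimate to $\tau_0=0$ via local well-posedness). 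Your parenthetical alternative is precisely this, including the key integration by parts in the conservative top-order term, so either route closes; the smoothing route buys a uniform treatment of both stable bounds at the cost of an extra linear lemma, while the energy route stays within the stated linear theory.
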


The proof of Proposition \ref{propbootstrap} is almost the same as \cite[Section $3$]{ksnsblowup}, with further simplification for neither being coupled with the fluid equation nor applying modified spectral projection. 
We include the proof out of completeness. 

We will apply the boundedness of $P_u$, $P_s$ on $H^1$ or $H^2$ from Proposition \ref{proposition: spectral properties of L}(2) without citing. We also remark that the constant $C$ appearing below might vary from line to line, but is always independent of $\delta_i$ $(0\le i \le 3)$.

\subsubsection{$H^1$ estimate of $\eps$ via semigroup method.}
\begin{lemma}
\label{lem: nonlinear norm Hk}
	For any $f \in H^{k+1}(\RR^3)$ with $k \ge 1$, then $N(f) \in H^{k}(\RR^3)$, in addition,
	\be
	\| N(f) \|_{H^k} \lesssim \| f \|_{H^{k+1}}^2.
	\label{nltermesti} 
	\ee
\end{lemma}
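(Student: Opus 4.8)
The plan is to split $N(f)$ via the pointwise identity and estimate each piece. Since
\[
N(f)=\nabla\cdot(f\,\nabla\Delta^{-1}f)=\nabla f\cdot\nabla\Delta^{-1}f+f\,\Delta\Delta^{-1}f=\nabla f\cdot\nabla\Delta^{-1}f+f^{2},
\]
it suffices to bound $\|f^{2}\|_{H^{k}}$ and $\|\nabla f\cdot\nabla\Delta^{-1}f\|_{H^{k}}$ by $\|f\|_{H^{k+1}}^{2}$. The quadratic term is immediate: as $k+1\ge 2>3/2$, the embedding $H^{k+1}(\RR^{3})\hookrightarrow L^{\infty}(\RR^{3})$ makes $H^{k+1}(\RR^{3})$ a Banach algebra, so $\|f^{2}\|_{H^{k}}\le\|f^{2}\|_{H^{k+1}}\lesssim\|f\|_{H^{k+1}}^{2}$.

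For the cross term the only facts about $\Delta^{-1}$ I would use concern its action on homogeneous Sobolev spaces: from $\widehat{\nabla\Delta^{-1}f}=-i\xi|\xi|^{-2}\hat f$ one gets $\|\nabla\Delta^{-1}f\|_{\dot H^{m}}=\|f\|_{\dot H^{m-1}}$ for every integer $m\ge 1$, so $\nabla\Delta^{-1}$ gains one derivative homogeneously; and for $j\ge 1$ the operator $\nabla^{j}(\nabla\Delta^{-1}f)=\nabla^{j+1}\Delta^{-1}f$ equals a zeroth-order Calderón--Zygmund operator (a composition of double Riesz transforms) applied to $\nabla^{j-1}f$, hence $\|\nabla^{j+1}\Delta^{-1}f\|_{H^{s}}\lesssim\|\nabla^{j-1}f\|_{H^{s}}$ for all $s\ge 0$. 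Combining the first fact with the three-dimensional embedding $\dot H^{1}\cap\dot H^{2}\hookrightarrow L^{\infty}$ (obtained by splitting the frequency integral of $\widehat{\nabla\Delta^{-1}f}$ at $|\xi|=1$),
\[
\|\nabla\Delta^{-1}f\|_{L^{\infty}}\lesssim\|\nabla\Delta^{-1}f\|_{\dot H^{1}}+\|\nabla\Delta^{-1}f\|_{\dot H^{2}}=\|f\|_{L^{2}}+\|f\|_{\dot H^{1}}\lesssim\|f\|_{H^{k+1}};
\]
and for $1\le j\le k$, since $\nabla^{j-1}f\in H^{k-j+2}$ (as $f\in H^{k+1}$) with $k-j+2\ge 2>3/2$, the Calderón--Zygmund bound yields $\|\nabla^{j}(\nabla\Delta^{-1}f)\|_{L^{\infty}}\lesssim\|\nabla^{j}(\nabla\Delta^{-1}f)\|_{H^{k-j+2}}\lesssim\|\nabla^{j-1}f\|_{H^{k-j+2}}\lesssim\|f\|_{H^{k+1}}$.

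With these in hand I would write $\|\nabla f\cdot\nabla\Delta^{-1}f\|_{H^{k}}^{2}=\|\nabla f\cdot\nabla\Delta^{-1}f\|_{L^{2}}^{2}+\|\nabla f\cdot\nabla\Delta^{-1}f\|_{\dot H^{k}}^{2}$ and apply the Leibniz rule,
\[
\|\nabla f\cdot\nabla\Delta^{-1}f\|_{\dot H^{k}}\lesssim\sum_{j=0}^{k}\big\|\,(\nabla^{k-j+1}f)\,(\nabla^{j}(\nabla\Delta^{-1}f))\,\big\|_{L^{2}}.
\]
In the $L^{2}$ term and the $j=0$ term I pair $\nabla^{k-j+1}f\in L^{2}$ (bounded by $\|f\|_{H^{k+1}}$) with $\nabla\Delta^{-1}f\in L^{\infty}$; in the terms $1\le j\le k$ I pair $\nabla^{k-j+1}f\in L^{2}$ with $\nabla^{j}(\nabla\Delta^{-1}f)\in L^{\infty}$. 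Each product is thus $\lesssim\|f\|_{H^{k+1}}^{2}$, which finishes the estimate.

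The only subtlety — not really an obstacle — is the low-frequency singularity of the multiplier $\xi|\xi|^{-2}$: $\nabla\Delta^{-1}f$ is generally not in $H^{k+2}$, only in the homogeneous spaces $\dot H^{m}$, $1\le m\le k+2$, together with $L^{\infty}$. This is harmless here because all the homogeneous norms $\|f\|_{\dot H^{m}}$ with $0\le m\le k+1$ are controlled by $\|f\|_{H^{k+1}}$; one simply has to be consistent in using homogeneous Sobolev norms for $\nabla\Delta^{-1}f$ and inhomogeneous ones for $f$. In the special case $k=1$ one can alternatively argue entirely by hand: $\nabla(\nabla f\cdot\nabla\Delta^{-1}f)=\nabla^{2}f\cdot\nabla\Delta^{-1}f+\nabla f\cdot\nabla^{2}\Delta^{-1}f$, and each factor is estimated in $L^{2}$ or $L^{\infty}$ using $H^{2}\hookrightarrow L^{\infty}$ and the $L^{2}$-boundedness of $\nabla^{2}\Delta^{-1}$.
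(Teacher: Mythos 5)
Your proof is correct and follows essentially the same route as the paper: the identical decomposition $N(f)=f^{2}+\nabla f\cdot\nabla\Delta^{-1}f$, the algebra property of $H^{k+1}$ for the quadratic term, an $L^{\infty}$ bound on $\nabla\Delta^{-1}f$ (which you derive by frequency splitting where the paper splits the Newtonian kernel in physical space), and a Leibniz expansion of the cross term. The only cosmetic difference is that the paper pairs the intermediate Leibniz terms in $L^{4}\times L^{4}$ while you place every $\nabla^{j}\nabla\Delta^{-1}f$ factor in $L^{\infty}$ via Calder\'on--Zygmund bounds and Sobolev embedding; both are valid.
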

\begin{proof}
Since $f \in H^{k+1}(\RR^3)$ with $k \ge 1$, by Sobolev inequality, $f \in L^{\infty}(\RR^3)$ and $\| f\|_{L^{\infty}} \lesssim \| f \|_{H^{k+1}}$. Hence 
\begin{align*}
\|\na \Delta^{-1} f \|_{L^\infty}
& =  \sup_{x \in \RR^3}\Big| -\frac{1}{4 \pi} \na_x \int \frac{f(y)}{|x-y|} dy \Big| \lesssim \sup_{x \in \RR^3}\int \frac{|f(x+y)|}{|y|^2} dy  \\
& \lesssim  \left( \int_{|y| \le 1} \frac{1}{|y|^2} dy \right) \| f \|_{L^\infty} + \left( \int_{|y| \ge 1} \frac{1}{|y|^4} dy \right)^\frac{1}{2} \| f \|_{L^2} \lesssim \| f\|_{H^{k+1}},
\end{align*}
where the coefficient is uniform in $f \in H^{k+1}$. Consequently,
	\begin{align*}
		\| N(f) \|_{H^k}
		& = \| f^2 + \na f \cdot \na \Delta^{-1} f \|_{H^{k+1}} 
		 \le \| f^2 \|_{H^{k+1}} + \| \na f \cdot \na \Delta^{-1} f \|_{H^k} \\
		 & \lesssim \| f \|_{H^{k+1}}^2 
         + \|  f \|_{H^{k+1}}\|  \na \Delta^{-1} f \|_{L^\infty}
          +
         \sum_{|\alpha| + |\beta| =k, \;  |\alpha| < k}  \| \partial^\alpha \na f \|_{L^4} \| \partial^\beta \na \Dein f \|_{L^4} \\
         & \lesssim \|f\|_{H^{k+1}}^2.
	\end{align*}
\end{proof}

\begin{lemma}[Semigroup estimate of $\| \eps \|_{H^1}$]
\label{lemimproveH3}
Under the assumptions of Proposition \ref{propbootstrap}, the bootstrap assumption \eqref{bootstrapstaH3} can be improved to
\be
\| \eps(\tau) \|_{H^1} \le \frac{1}{2} \delta_1 e^{-\frac{\delta_g}{2} \tau}, \quad \forall \; \tau \in [0,\tau^*].
\label{improveH3}
\ee
\end{lemma}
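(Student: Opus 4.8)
The plan is to run a standard Duhamel/semigroup argument for the stable component $\eps$, using the exponential decay of $e^{-\tau\calL}$ on $H^1_s$ from Proposition \ref{proposition: spectral properties of L}(5) together with the quadratic nonlinear estimate from Lemma \ref{lem: nonlinear norm Hk}. First I would write the Duhamel formula for the first equation in \eqref{linearizedeqV2}:
\[
\eps(\tau) = e^{-\tau\calL}\eps_{s0} + \int_0^\tau e^{-(\tau-\sigma)\calL} P_s N(\ep(\sigma))\, d\sigma,
\]
valid since $\eps_{s0}\in H^1_s$ and $P_s$ commutes with the semigroup and maps into $H^1_s$. Applying \eqref{decay rate: stable part} to both terms gives
\[
\|\eps(\tau)\|_{H^1} \le C e^{-\frac12\delta_g\tau}\|\eps_{s0}\|_{H^1} + C\int_0^\tau e^{-\frac12\delta_g(\tau-\sigma)}\|P_s N(\ep(\sigma))\|_{H^1}\, d\sigma.
\]

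The second step is to bound the nonlinear term. By boundedness of $P_s$ on $H^1$ and Lemma \ref{lem: nonlinear norm Hk} with $k=1$, $\|P_s N(\ep)\|_{H^1} \lesssim \|N(\ep)\|_{H^1} \lesssim \|\ep\|_{H^2}^2$. Then I would use $\ep = \eps + \epu$ with $\|\epu\|_{H^2}\sim\|\epu\|_{\tilde B}$ (Proposition \ref{proposition: spectral properties of L}(4)) and the bootstrap assumptions \eqref{bootstrapstaH3}, \eqref{bootstrapstaH5}, \eqref{bootstrapuns} to estimate, on $[0,\tau^*]$,
\[
\|\ep(\sigma)\|_{H^2}^2 \lesssim \|\eps(\sigma)\|_{H^1}^2 + \|\eps(\sigma)\|_{\dot H^2}^2 + \|\epu(\sigma)\|_{\tilde B}^2 \lesssim (\delta_1^2 + \delta_2^2 + \delta_3^2) e^{-\delta_g\sigma} \lesssim \delta_2^2 e^{-\delta_g\sigma},
\]
using the ordering \eqref{bootstrapcoeffi} (so $\delta_1,\delta_3 \ll \delta_2$). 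Note I need $\|\ep\|_{H^2}$, not $\|\ep\|_{H^3}$, on the right of the nonlinear estimate, so I should invoke Lemma \ref{lem: nonlinear norm Hk} at the level $k=1$ which only costs $\|\ep\|_{H^2}^2$ — this is exactly why the $\dot H^2$ bootstrap control \eqref{bootstrapstaH5} is available and needed.

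Plugging this into the Duhamel estimate, the integral is bounded by $C\delta_2^2 \int_0^\tau e^{-\frac12\delta_g(\tau-\sigma)} e^{-\delta_g\sigma}\, d\sigma \le C\delta_2^2 e^{-\frac12\delta_g\tau}\int_0^\tau e^{-\frac12\delta_g\sigma}\,d\sigma \le \frac{2C}{\delta_g}\delta_2^2 e^{-\frac12\delta_g\tau}$. Combined with $\|\eps_{s0}\|_{H^1} \le \|\eps_{s0}\|_{H^2} \le \delta_0$ from \eqref{bootstrapinitial}, we get
\[
\|\eps(\tau)\|_{H^1} \le C\delta_0 e^{-\frac12\delta_g\tau} + \frac{2C}{\delta_g}\delta_2^2 e^{-\frac12\delta_g\tau}.
\]
To close, I choose constants so the right side is $\le \frac12\delta_1 e^{-\frac12\delta_g\tau}$: since $\delta_0 \ll \delta_3 \ll \delta_1$ we have $C\delta_0 \le \frac14\delta_1$, and since $\delta_2^2 \ll \delta_3 \ll \delta_1$ and $\delta_g$ is a fixed constant (independent of the $\delta_i$), we have $\frac{2C}{\delta_g}\delta_2^2 \le \frac14\delta_1$. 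This yields \eqref{improveH3}.

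The argument is essentially routine once the ingredients are in place; the only mild subtlety — the place I'd be most careful — is the bookkeeping of which $\delta_i$ dominates which, i.e. making sure the nonlinear contribution genuinely lands below $\frac12\delta_1$ using only the hierarchy \eqref{bootstrapcoeffi} and that all implicit constants $C$ are independent of the $\delta_i$ (in particular that $\delta_g$ is frozen before the $\delta_i$ are chosen, so dividing by $\delta_g$ is harmless). A secondary point is justifying the Duhamel representation in $H^1_s$, which follows from the strong-continuity of $e^{-\tau\calL}$ and invariance of $H^1_s$ asserted in Proposition \ref{proposition: spectral properties of L}(3),(5).
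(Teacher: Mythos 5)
Your proposal is correct and follows essentially the same route as the paper: Duhamel representation for $\eps$, the semigroup decay \eqref{decay rate: stable part}, the quadratic bound $\|N(\ep)\|_{H^1}\lesssim\|\ep\|_{H^2}^2$ from Lemma \ref{lem: nonlinear norm Hk} with $k=1$, the bootstrap controls, and the hierarchy \eqref{bootstrapcoeffi} to absorb $C(\delta_0+\delta_1^2+\delta_2^2+\delta_3^2)$ into $\tfrac12\delta_1$. The points you flag as subtleties (constants independent of the $\delta_i$, $\delta_g$ frozen first, equivalence of $\|\cdot\|_{\tilde B}$ with $\|\cdot\|_{H^2}$ on the finite-dimensional unstable space) are exactly the ones the paper relies on implicitly.
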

\begin{proof}
    From the first equation of \eqref{linearizedeqV2}, by using Duhamel's principle, $\eps$ can be written as
 \[
 \ep_s(\tau) = e^{-\tau \calL } \ep_{s0} + \int_0^\tau e^{-(\tau-s) \calL}P_s(N(\ep) )(s) ds.
  \]
  We now apply \eqref{decay rate: stable part}, \eqref{nltermesti},  \eqref{bootstrapcoeffi}, \eqref{bootstrapstaH3},  \eqref{bootstrapstaH5} and \eqref{bootstrapuns} to see
  \begin{align*}
  	\| \ep_s \|_{H^1}
  	& \le C e^{-\frac{\dg}{2} \tau} \| \ep_{s0} \|_{H^1} + C\int_0^\tau e^{-\frac{\dg}{2} (\tau-s)} \left( \| \ep_s \|_{H^2}^2 +  \| \ep_u \|_{H^2}^2  \right)ds \\
  	& \le C\left( \delta_0 + \delta_1^2 + \delta_2^2 + \delta_3^2 \right) e^{-\frac{\dg}{2} \tau} \le \frac{1}{2} \delta_1 e^{-\frac{\dg}{2} \tau}, \quad \forall \; \tau  \in [0,\tau^*].
  \end{align*}
  \end{proof}

  \subsubsection{$\dot H^2$ estimate of $\eps$ via energy estimate.}

  \begin{lemma}[A priori estiamte of  $\| \eps \|_{\dot H^2}$]
\label{lemimproveH5}
Under the assumptions of Proposition \ref{propbootstrap}, the bootstrap assumption \eqref{bootstrapstaH5} can be improved to
\be
\| \eps(\tau) \|_{\dot H^2} \le \frac{1}{2} \delta_2 e^{-\frac{\delta_g}{2} \tau}, \quad \forall \; \tau \in [0,\tau^*].
\label{improveH4}
\ee
\end{lemma}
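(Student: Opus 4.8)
The plan is to close the bootstrap inequality \eqref{bootstrapstaH5} by a direct energy estimate at the top regularity level for the stable part. Applying $\calD^2$ to the first equation of \eqref{linearizedeqV2} and pairing with $\calD^2 \eps$ in $L^2(\RR^3)$ gives
\[
\frac{1}{2}\frac{d}{d\tau}\|\eps\|_{\dot H^2}^2 = -(\calD^2 \calL \eps, \calD^2 \eps)_{L^2} + (\calD^2 P_s N(\ep), \calD^2 \eps)_{L^2},
\]
which is rigorous for $\tau > 0$ thanks to the parabolic smoothing of $e^{-\tau\calL}$ (or after mollifying the equation and passing to the limit). For the linear part I would handle the five terms of $\calL$ in \eqref{eqdefcalL} separately. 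The Laplacian contributes the coercive term $-\|\eps\|_{\dot H^3}^2$. Since $\frac12\Lambda = \frac12 y\cdot\na + 1$ and $[\calD^2, y\cdot\na] = 2\calD^2$, integration by parts ($\na\cdot y = 3$) yields $-(\calD^2 \tfrac12\Lambda \eps, \calD^2\eps)_{L^2} = -\tfrac54 \|\eps\|_{\dot H^2}^2$, which supplies the decay. The three remaining terms are lower order: exploiting the fast decay of $Q$, $\na\Delta^{-1}Q$ and all their derivatives, the Calder\'on--Zygmund bound $\|\calD^2 \Delta^{-1} f\|_{L^2}\lesssim \|f\|_{L^2}$ for the nonlocal term $\na Q\cdot \na\Delta^{-1}\eps$, the identity $\na\cdot\na\Delta^{-1}Q = Q$ to integrate by parts in the transport term $\na\Delta^{-1}Q\cdot\na\eps$ (producing $-\tfrac12\int Q|\calD^2\eps|^2$), Hardy's inequality, and interpolation, one bounds their total contribution by $C\|\eps\|_{\dot H^2}^2 + \eta\|\eps\|_{\dot H^3}^2 + C_\eta \|\eps\|_{H^1}^2$ with $\eta$ as small as desired. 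Absorbing $\eta\|\eps\|_{\dot H^3}^2$ into the Laplacian coercivity and trading $C\|\eps\|_{\dot H^2}^2$ against $\dot H^3$ and $\dot H^1$ via $\|\eps\|_{\dot H^2}^2\le \lambda\|\eps\|_{\dot H^3}^2 + C_\lambda\|\eps\|_{\dot H^1}^2$, the linear contribution becomes $\le -\tfrac54\|\eps\|_{\dot H^2}^2 - \tfrac12\|\eps\|_{\dot H^3}^2 + C\|\eps\|_{H^1}^2$.

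The nonlinear term is the main obstacle, since $\ep$ lies only in $H^2$ and hence $\|N(\ep)\|_{\dot H^2}$ is \emph{not} controlled by $\|\ep\|_{H^2}$. I would use the divergence form $N(\ep) = \na\cdot(\ep\na\Delta^{-1}\ep)$ from \eqref{nonlinearterm}. Writing $P_s = \mathrm{Id} - P_u$, the $P_u$ piece is harmless because $\|P_u N(\ep)\|_{\dot H^2}\lesssim \|N(\ep)\|_{H^1}\lesssim \|\ep\|_{H^2}^2$ by Lemma \ref{lem: nonlinear norm Hk} and the smoothing of $P_u$ from Proposition \ref{proposition: spectral properties of L}. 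For the remaining $N(\ep)$ piece, integrating the outer divergence by parts onto $\calD^2\eps$ gives
\[
(\calD^2 N(\ep), \calD^2\eps)_{L^2} = -(\calD^2(\ep\na\Delta^{-1}\ep), \na\calD^2\eps)_{L^2},
\]
and $\|\calD^2(\ep\na\Delta^{-1}\ep)\|_{L^2}\lesssim \|\ep\|_{H^2}^2$, using $\|\na\Delta^{-1}\ep\|_{L^\infty}\lesssim \|\ep\|_{H^2}$ (as in the proof of Lemma \ref{lem: nonlinear norm Hk}), the $L^p$-boundedness of the zeroth-order operator $\calD^2\Delta^{-1}$, and the embedding $H^2(\RR^3)\hookrightarrow W^{1,3}(\RR^3)$. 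Hence this term is $\lesssim \|\ep\|_{H^2}^2\|\eps\|_{\dot H^3}\le \tfrac14\|\eps\|_{\dot H^3}^2 + C\|\ep\|_{H^2}^4$, with the $\dot H^3$ part absorbed by the $-\tfrac12\|\eps\|_{\dot H^3}^2$ retained above.

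Combining all contributions we reach
\[
\frac{d}{d\tau}\|\eps\|_{\dot H^2}^2 \le -\tfrac52\|\eps\|_{\dot H^2}^2 + C\|\eps\|_{H^1}^2 + C\|\ep\|_{H^2}^4
\]
(together with similar lower-order terms such as $C\|\ep\|_{H^2}^2\|\eps\|_{\dot H^2}$). Inserting the semigroup decay $\|\eps(\tau)\|_{H^1}\le\tfrac12\delta_1 e^{-\delta_g\tau/2}$ from Lemma \ref{lemimproveH3} and the bootstrap bounds \eqref{bootstrapstaH3}--\eqref{bootstrapuns} (so that $\|\ep\|_{H^2}^2\lesssim \delta_2^2 e^{-\delta_g\tau}$), the source term is $\lesssim (\delta_1^2 + \delta_2^3)e^{-\delta_g\tau}$. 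Since $\tfrac52 > \delta_g$, Gronwall together with $\|\eps(0)\|_{\dot H^2}\le \delta_0$ from \eqref{bootstrapinitial} gives $\|\eps(\tau)\|_{\dot H^2}^2 \le \big(\delta_0^2 + C(\delta_1^2 + \delta_2^3)\big)e^{-\delta_g\tau} \le \tfrac14\delta_2^2 e^{-\delta_g\tau}$ by the smallness hierarchy \eqref{bootstrapcoeffi} (using $\delta_0, \delta_1 \ll \delta_2 \ll 1$), which is precisely \eqref{improveH4}. The only genuinely delicate step is the top-order nonlinear estimate above — controlling $N(\ep)$ at the $\dot H^2$ level despite $\ep$ being only in $H^2$ — which is exactly why both the divergence structure of $N$ and the $\dot H^3$ gain from $-\Delta$ are needed.
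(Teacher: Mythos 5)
Your proof is correct and follows essentially the same route as the paper's: an $\dot H^2$ energy estimate exploiting the coercivity $-\|\eps\|_{\dot H^3}^2-\tfrac54\|\eps\|_{\dot H^2}^2$ of $\Delta-\tfrac12\Lambda$, absorption of the lower-order and nonlinear contributions via the $\dot H^3$ gain, and Gronwall; the only cosmetic difference is that you integrate the outer divergence of $N(\ep)$ by parts onto $\calD^2\eps$, whereas the paper invokes Lemma \ref{lem: nonlinear norm Hk} at the $H^2$ level (using the a posteriori $H^3$ regularity of $\ep$) and absorbs the resulting $\|\eps\|_{\dot H^3}^2\|\eps\|_{\dot H^2}$ term by smallness of $\|\eps\|_{\dot H^2}$. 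One small point worth mirroring from the paper: since the differential inequality is only rigorous for $\tau>0$, the Gronwall argument should first be run on $[\tau_0,\tau]$ with $\tau_0>0$ and then pushed to $\tau_0=0$ using $\rho\in C_t^0H_x^2$.
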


  \begin{proof}
  Since Keller-Segel equation \eqref{equation, Keller-Segel} is locally well-posed in $H^2$\footnote{This can be obtained from standard fixed point arguments as in \cite{zbMATH01532872} or \cite[Appendix A]{ksnsblowup}.}, so is the renormalized system \eqref{eqrenormalized} and the regularity of $\ep(\tau, \cdot)$ can be improved to $H^3$ for any $\tau>0$ by the smoothing effect of heat kernel $e^{t \Delta}$. Then by integration by parts and interpolation inequality, we obtain that
  \begin{align*}
      & \left( (\Delta - \frac{1}{2} \Lambda) \ep_s,  \ep_s \right)_{\dot H^2}
      = - \| \ep_s \|_{\dot H^3}^2  - \frac{5}{4} \| \ep_s \|_{\dot H^2}^2, \\
      & \left(\na \cdot (\ep_s \na \Dein Q), \ep_s \right)_{\dot H^2}
      = \frac{1}{2} \int Q |\calD^2 \ep_s|^2 dx + O\left( \| \ep_s \|_{H^1} \| \ep_s \|_{\dot H^3} \right) = O \left( \| \ep_s \|_{H^1} \| \ep_s \|_{\dot H^3} \right), \\
      & \left(\na \cdot (Q \na \Dein \ep_s), \ep_s \right)_{\dot H^2}
      = O \left( \| \ep_s \|_{H^1} \| \ep_s \|_{\dot H^3} \right),
  \end{align*}
  which, by Young's inequality, yields that
      \begin{align}
	-(\calL \eps , \eps)_{\dot H^2}
	& \le -\frac{7}{8} \| \eps \|_{\dot H^3}^2 - \frac{5}{4} \| \eps \|_{\dot H^2}^2 + C \| \eps \|_{H^1}^2,
    \label{H4liesti}
\end{align}
for some $C>0$. We also estimate using \eqref{nltermesti} to see 
\begin{align*} \left( P_s N(\ep) , \ep_s \right)_{\dot H^2} \lesssim \| N(\e) \|_{H^2} \| \e_s \|_{\dot H^2} \lesssim \left( \| \e_s \|_{\dot H^3}^2  + \| \e_s \|_{H^2}^2  + \| \e_u \|_{\tilde B}^2 \right) \| \e_s \|_{\dot H^2}.
\end{align*}
Thus taking $\dot H^2(\RR^3)$ inner product with $\eps$ onto both sides of first equation of \eqref{linearizedeqV2}, and by \eqref{bootstrapstaH3},  \eqref{bootstrapstaH5}, \eqref{bootstrapuns},
    \begin{align*}
	\frac{1}{2} \frac{d}{d\tau} \| \eps \|_{\dot H^2}^2
	& = \left( -\calL\eps + P_s N(\ep) , \ep_s \right)_{\dot H^2} \\
    &  \le \left(C \| \e_s\|_{\dot H^2}  -\frac{7}{8}\right) \| \eps \|_{\dot H^3}^2 - \frac{5}{4} \| \eps \|_{\dot H^2}^2 + C \| \eps \|_{H^1}^2  \\
    & \qquad + 
	 C \| \ep_s \|_{H^2}^2 \| \eps \|_{\dot H^2} +  C \| \epu \|_{\tilde B}^2 \| \eps \|_{\dot H^2} \\
    & \le - \| \eps \|_{\dot H^2}^2 + C \delta_1^2 e^{-\delta_g \tau} + C(\delta_1 + \delta_2+ \delta_3)^2 \delta_2 e^{-\frac{3\delta_g}{2} \tau}, \quad \forall \; \tau \in (0,\tau^*],
 \end{align*}
 and thus by Gronwall's inequality, one deduces that
 \begin{align}
     \| \eps(\tau) \|_{\dot H^2}^2  
     & \le e^{-2 (\tau- \tau_0)}\| \ep_{s} (\tau_0) \|_{\dot H^2}^2 +  \int_{\tau_0}^\tau e^{-2 (\tau-s)} \left(  C \delta_1^2 e^{-\delta_g s} + C(\delta_1 + \delta_2+ \delta_3)^2 \delta_2 e^{-\frac{3\delta_g}{2} s} \right) ds \nonumber \\
     & \le e^{-\delta_g (\tau-\tau_0)} \| \ep_{s} (\tau_0) \|_{\dot H^2}^2 + 
 C \left( \delta_1^2 + (\delta_1 + \delta_2+ \delta_3)^2 \delta_2 \right) e^{-\delta_g (\tau - \tau_0)}, \quad \forall \; \tau \in [\tau_0,\tau], \label{eqenergyintegral}
 \end{align}
 with any $\tau_0 >0$. By the local wellposedness of \eqref{equation, Keller-Segel} again, we know that $\rho \in C_t^0 H_x^2\left( \left[ 
 0,T_{max} \right) \times \RR^3 \right)$, which propagates \eqref{eqenergyintegral} to $\tau_0 = 0$.  Consequently,
  \begin{align*}
     \| \eps(\tau) \|_{\dot H^2}^2 
     & \le e^{-\delta_g \tau} \| \ep_{s0} \|_{\dot H^2}^2 + C \left( \delta_1^2 + (\delta_1 + \delta_2+ \delta_3)^2 \delta_2 \right) e^{-\delta_g \tau} \le \frac{\delta_2^2}{4} e^{-\delta_g \tau}, \quad \forall \; \tau \in [0,\tau],
 \end{align*}
 under the assumption of the coefficients $\{ \delta_i \}$ given in \eqref{bootstrapinitial}, and thus we have completed the proof.
\end{proof}

\subsubsection{Control of unstable part $\epu$.} 
\begin{proof}[Proof of Proposition \ref{propbootstrap}]

\mbox{}

\noindent \underline{\textit{Improvement of the bootstrap assumptions.}} 
By contradiction, we assume that there exists $\ep_{s0}$ satisfying (\ref{bootstrapinitial}), such that for any $\ep_{u0}$ with $\| \ep_{u0} \|_{\tilde B} \le \delta_3$, the exit time $\tau^*$ defined by	
\be
\tau^* = \sup \{ \tau \ge 0:  (\eps, \epu) \text{ satisfy \eqref{bootstrapstaH3}-\eqref{bootstrapuns} simultaneously on } [0,\tau] \},
\label{exiting time}
\ee
is finite. Then according to Lemma \ref{lemimproveH3} and Lemma \ref{lemimproveH5}, combining with the continuity of the solution to system \eqref{linearizedeqV2} with respect to $\tau$, there exists $0 < \tau_\epsilon \ll 1$ such that (\ref{bootstrapstaH3}) and (\ref{bootstrapstaH5}) hold for all $\tau \in [0, \tau^* + \tau_\epsilon]$. 

\mbox{}

\noindent \underline{\textit{Out-going flux property.}}
  With the argument above, we see that the only scenario for the solution to exit the bootstrap regime is when the bootstrap assumption for the unstable part $\epu$ given in \eqref{bootstrapuns} does not hold for $\tau > \tau^*$. Hence if we define 
\[
\tilde B(\tau) = \{ v \in H_u^1: \| v \|_{\tilde B} \le \delta_3 e^{-\frac{7}{10} \dg \tau} \},
\]
then we find that 
\[
\begin{cases}
	\epu(\tau) \in \tilde B(\tau), & \forall \; \tau \in [0,\tau^*], \\
	\epu(\tau^*) \in \partial \tilde B(\tau^*), & \tau =\tau^*.
\end{cases}
\]
Taking $\tilde B$ inner project with $\epu$ onto both sides of the second equation of \eqref{linearizedeqV2}, together with \eqref{normunstable} and \eqref{nltermesti},
\begin{align*}
	\frac{1}{2} \frac{d}{d\tau} \| \epu \|_{\tilde B}^2 
	& = \left( \calL \epu + P_uN(\ep) , \epu \right)_{\tilde B} \\
	& \ge - \frac{6 \dg}{10} \| \epu \|_{\tilde B}^2 + \left(P_uN(\ep) , \epu \right)_{\tilde B} \\
	& \ge - \frac{6 \dg}{10} \| \epu \|_{\tilde B}^2  - \left( \| \eps \|_{H^2}^2 + \| \epu \|_{\tilde B}^2 \right) \| \epu \|_{\tilde B}.
\end{align*}
Consequently, by \eqref{bootstrapcoeffi}, \eqref{bootstrapinitial}, \eqref{bootstrapstaH3}, \eqref{bootstrapstaH5} and \eqref{bootstrapuns},  
\begin{align}
	\frac{d}{d\tau}\Big|_{\tau = \tau^*} \left( e^{\frac{7}{5} \dg \tau} \| \epu \|_{\tilde B}^2 \right)
	& = \left( \frac{7 \dg}{5} e^{\frac{7}{5} \dg \tau} \| \epu \|_{\tilde B}^2 + e^{\frac{7}{5} \dg \tau}  \frac{d}{d\tau}\| \epu \|_{\tilde B}^2 \right) \Big|_{\tau =\tau^*} \notag \\
	& \ge \left( \frac{\dg }{5}  e^{\frac{7}{5} \dg \tau}  \| \epu \|_{\tilde B}^2  - C e^{\frac{7 \dg}{5} \tau}\left( \delta_1^2 + \delta_2^2 + \delta_3^2 \right) \delta_3 e^{-\frac{3}{2} \delg \tau} \right) \Big|_{\tau = \tau^*} \notag \\
	&  \ge \frac{\dg}{5} \delta_3^2 - C \delta_3 \left( \delta_1^2 + \delta_2^2 +\delta_3^2 \right) \ge \frac{\dg}{10} \delta_3^2 >0,
	\label{outgoing flux}
\end{align}
which is called the outer-going flux property.

\mbox{}

\noindent  \underline{\textit{Brouwer's topological argument.}}
By the $H^2$ local well-posedness theory of $3D$ Keller-Segel equation and the out-going flux property \eqref{outgoing flux}, we have the continuity of the mapping $\e_{u0} \mapsto \epu \left(\tau^*(\ep_{u0}) \right)$, where $\tau^*(\ep_{u0})$ denotes the exit time with initial data $\ep_{u0}$. Subsequently, we define a continuous map $\Phi:\overline{B_{\tilde B}(0,1)} \to \partial {B_{\tilde B}(0,1)}$ as follows:
\[
f \in \overline{B_{\tilde B}(0,1)}  \mapsto \ep_{u0}:= \delta_3 f \in \overline{B_{\tilde B}(0, \delta_3)} \mapsto \epu(\tau^*) \in \partial \tilde \calB(\tau^*) \mapsto \frac{\epu(\tau^*)}{\| \epu(\tau^*) \|_{\tilde B}} \in \partial B_{\tilde B}(0,1),
\]
where $B_{\tilde B}(0,1)$ represents the unit ball centered at the origin in the finite-dimensional space $(H_u^1, \| \cdot \|_{\tilde B})$. Particularly, when $f \in \partial B_{\tilde B}(0,1)$, i.e., $\delta_3 f \in \partial B_{\tilde B} (0,\delta_3)$, according to the outer-going flux property of the flow as indicated in (\ref{outgoing flux}), $\tau =0$ is the exit time, thus leading to $\Phi(f) = f$. In essence, $\Phi= Id$ on the boundary $\partial B_{\tilde B}(0,1)$. However, upon applying Brouwer's fixed-point theorem to $-\Phi$ on $B_{\tilde B}(0,1)$, we conclude the existence of a fixed point of $-\Phi$ on the boundary $\partial B_{\tilde B}(0,1)$, contradicting the assertion that $\Phi =Id$ on the boundary. Thus we have concluded the proof of Proposition \ref{propbootstrap}.
\end{proof}

\subsection{Lipschitz dependence}
\label{subsLipschizde}
\begin{proposition}[Lipschitz dependence]
\label{propLipde} 
    Let $\ep_{s0}^{(0)},\ep_{s0}^{(1)} \in H_s^1(\RR^3) \cap H^2(\RR^3)$ satisfy \eqref{bootstrapinitial}, then the related initial datum of the unstable part $\Phi(\ep_{s0}^{(0)}) = \ep_{u0}^{(0)}$ and $\Phi(\ep_{s0}^{(1)}) =\ep_{u0}^{(1)}$, associated by Proposition \ref{propbootstrap}, satisfy
    \be
    \Big\| \Phi(\ep_{s0}^{(0)}) - \Phi(\ep_{s0}^{(1)}) \Big\|_{\tilde B} \lesssim \bar \delta \Big\| \ep_{s0}^{(0)} - \ep_{s0}^{(1)} \Big\|_{H^2},
    \label{Lipsconti}
    \ee
    where $0< \bar\delta =\delta_1+ \delta_2 + \delta_3 \ll 1$ with $\delta_i$ provided in Proposition \ref{propbootstrap}.
\end{proposition}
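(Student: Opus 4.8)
The plan is to run the bootstrap scheme of Section~\ref{subsfinitecodista} on the \emph{difference} of the two solutions and to keep track of the Lipschitz constant. For $i=0,1$ let $\ep^{(i)}$ be the solution of \eqref{eqrenormalized} furnished by Proposition~\ref{propbootstrap} with stable datum $\ep_{s0}^{(i)}$ and selected unstable datum $\Phi(\ep_{s0}^{(i)})=\ep_{u0}^{(i)}$, and set $\zeta:=\ep^{(0)}-\ep^{(1)}$, $\zeta_s:=P_s\zeta$, $\zeta_u:=P_u\zeta$. Subtracting the two copies of \eqref{linearizedeq} gives $\partial_\tau\zeta=-\calL\zeta+\big(N(\ep^{(0)})-N(\ep^{(1)})\big)$; projecting with $P_s,P_u$ (which commute with $\calL$ and preserve $H_s,H_u$ by Proposition~\ref{proposition: spectral properties of L}(3)) yields the analogue of \eqref{linearizedeqV2} with source $P_{s/u}\big(N(\ep^{(0)})-N(\ep^{(1)})\big)$. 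The gain over Section~\ref{subsfinitecodista} is that smallness is already in hand: by \eqref{bootstrapstaH3}--\eqref{bootstrapuns} and $\|\cdot\|_{\tilde B}\sim\|\cdot\|_{H^n}$ one has $\|\ep^{(i)}(\tau)\|_{H^2}\lesssim\bar\delta\,e^{-\frac12\dg\tau}$ and $\|\zeta_u(\tau)\|_{\tilde B}\lesssim\delta_3\,e^{-\frac{7}{10}\dg\tau}\to0$.

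The first ingredient is the bilinear structure of the nonlinearity \eqref{nonlinearterm}:
\[
N(\ep^{(0)})-N(\ep^{(1)})=\na\cdot\big(\zeta\,\na\Dein\ep^{(0)}\big)+\na\cdot\big(\ep^{(1)}\,\na\Dein\zeta\big),
\]
so the estimates of Lemma~\ref{lem: nonlinear norm Hk} (applied to this bilinear form, using $\|\na\Dein f\|_{L^\infty}\lesssim\|f\|_{H^2}$) give $\|N(\ep^{(0)})-N(\ep^{(1)})\|_{H^1}\lesssim\|\zeta\|_{H^2}\big(\|\ep^{(0)}\|_{H^2}+\|\ep^{(1)}\|_{H^2}\big)\lesssim\bar\delta\,e^{-\frac12\dg\tau}\|\zeta(\tau)\|_{H^2}$, and analogously in $H^2$ with one extra $\dot H^3$ factor carried by $\zeta$ or by $\ep^{(i)}$. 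Introduce the master quantity
\[
\mathcal{M}:=\sup_{\tau\ge0}e^{\frac12\dg\tau}\Big(\|\zeta_s(\tau)\|_{H^1}+\|\zeta_s(\tau)\|_{\dot H^2}+\|\zeta_u(\tau)\|_{\tilde B}\Big),
\]
which is finite by the bootstrap bounds on $\ep^{(0)},\ep^{(1)}$ and satisfies $\|\zeta(\tau)\|_{H^2}\lesssim\mathcal{M}\,e^{-\frac12\dg\tau}$; the goal is to prove $\mathcal{M}\lesssim\|\ep_{s0}^{(0)}-\ep_{s0}^{(1)}\|_{H^2}+\bar\delta\,\mathcal{M}$ and then absorb the last term.

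For the stable part one reproduces Lemmas~\ref{lemimproveH3} and~\ref{lemimproveH5} verbatim for $\zeta_s$: Duhamel against the semigroup on $H_s$ (Proposition~\ref{proposition: spectral properties of L}(5)) gives $e^{\frac12\dg\tau}\|\zeta_s(\tau)\|_{H^1}\lesssim\|\zeta_s(0)\|_{H^1}+\bar\delta\,\mathcal{M}$, and the $\dot H^2$ energy estimate — using $-(\calL\zeta_s,\zeta_s)_{\dot H^2}\le-\frac78\|\zeta_s\|_{\dot H^3}^2-\frac54\|\zeta_s\|_{\dot H^2}^2+C\|\zeta_s\|_{H^1}^2$ as in \eqref{H4liesti}, absorbing the $\dot H^3$ factors of the nonlinear source (which come with $O(\bar\delta)$ coefficients) into the dissipation, then Gronwall — gives $e^{\frac12\dg\tau}\|\zeta_s(\tau)\|_{\dot H^2}\lesssim\|\zeta_s(0)\|_{H^2}+\bar\delta\,\mathcal{M}$. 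For the unstable part, $\calL$ restricted to the finite–dimensional $H_u$ has eigenvalues $-1$ and $-\frac12$ by \eqref{eqexplicitunstablemode}, so $e^{s\calL}$ decays forward in $s$ on $H_u$, with operator norm $\lesssim e^{-\frac12(s-\tau)}$ for $s\ge\tau$; since $\zeta_u(\tau)\to0$ the linear ODE for $\zeta_u$ may be solved backward in time,
\[
\zeta_u(\tau)=-\int_\tau^\infty e^{-(\tau-s)\calL}\,P_u\big(N(\ep^{(0)})-N(\ep^{(1)})\big)(s)\,ds,
\]
and estimating with $\|P_u(\cdot)\|_{\tilde B}\lesssim\|\cdot\|_{H^1}$ (Proposition~\ref{proposition: spectral properties of L}(2)) and the bilinear bound above yields $e^{\frac12\dg\tau}\|\zeta_u(\tau)\|_{\tilde B}\lesssim\bar\delta\,\mathcal{M}$. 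Summing the three bounds gives $\mathcal{M}\lesssim\|\ep_{s0}^{(0)}-\ep_{s0}^{(1)}\|_{H^2}+\bar\delta\,\mathcal{M}$, which absorbs (since $\bar\delta\ll1$) to $\mathcal{M}\lesssim\|\ep_{s0}^{(0)}-\ep_{s0}^{(1)}\|_{H^2}$; evaluating the unstable bound at $\tau=0$ gives $\|\Phi(\ep_{s0}^{(0)})-\Phi(\ep_{s0}^{(1)})\|_{\tilde B}=\|\zeta_u(0)\|_{\tilde B}\lesssim\bar\delta\,\mathcal{M}\lesssim\bar\delta\,\|\ep_{s0}^{(0)}-\ep_{s0}^{(1)}\|_{H^2}$, which is \eqref{Lipsconti}.

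The main technical obstacle is the $\dot H^2$ energy estimate for $\zeta_s$: bounding $\|N(\ep^{(0)})-N(\ep^{(1)})\|_{H^2}$ brings in $\dot H^3$ norms of $\zeta$ and of the $\ep^{(i)}$ that are not uniformly bounded in $\tau$, so — exactly as in the proof of Lemma~\ref{lemimproveH5} — one must use parabolic smoothing (which upgrades $\ep^{(i)}$ to $H^3$ for $\tau>0$, propagated down to $\tau=0$ by the $H^2$ local wellposedness of \eqref{equation, Keller-Segel}) and absorb all $\dot H^3$ contributions into the negative term $-\frac78\|\zeta_s\|_{\dot H^3}^2$, exploiting that they appear multiplied by $O(\bar\delta)$. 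The backward–Duhamel step for $\zeta_u$ is conceptually the crux — it is precisely where the factor $\bar\delta$ in \eqref{Lipsconti} is produced — but is otherwise a routine finite–dimensional computation.
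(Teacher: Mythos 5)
Your overall scheme — form the difference $\zeta=\ep^{(0)}-\ep^{(1)}$, control it through weighted sup-in-time norms, use Duhamel/semigroup decay for the $H^1$ stable part, an $\dot H^2$ energy estimate for the higher stable norm, and a backward-in-time integration for the unstable coefficients whose value at $\tau=0$ produces the factor $\bar\delta$ — is exactly the paper's argument (the paper's coefficient-wise vanishing condition $a_j(0)+\int_0^\infty e^{-z_js}(P_uN_D,\psi_j)_{H^1}\,ds=0$ is the same as your backward Duhamel formula). But there is one genuine gap, and it sits precisely at the point you flag as "the main technical obstacle": the $\dot H^2$ energy estimate for the term $\na\cdot\big(\ep^{(1)}\na\Dein\zeta\big)$. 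Your proposed resolution — "absorb all $\dot H^3$ contributions into the negative term $-\frac78\|\zeta_s\|_{\dot H^3}^2$, exploiting that they appear multiplied by $O(\bar\delta)$" — does not work for this term, because the dangerous $\dot H^3$ factor here is $\|\ep^{(1)}\|_{\dot H^3}$, a \emph{different function} from $\zeta_s$: a term of the form $\bar\delta\,\|\ep^{(1)}\|_{\dot H^3}\|\zeta\|_{H^2}\|\zeta_s\|_{\dot H^2}$ cannot be Young-absorbed into $-\|\zeta_s\|_{\dot H^3}^2$, and parabolic smoothing only gives $\ep^{(1)}(\tau)\in H^3$ qualitatively for $\tau>0$, with no uniform (let alone small) bound. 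In Lemma \ref{lemimproveH5} the absorption works only because the nonlinearity is quadratic in the \emph{same} function being estimated; that structure is lost in the difference equation.

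The missing idea is the one the paper supplies in \eqref{H2 estimate: difference}: write $P_s=I-P_u$; for the identity part integrate by parts so that the extra derivative lands on the test function, giving $\big|\big(\na\cdot(\ep^{(1)}\na\Dein\zeta),\zeta_s\big)_{\dot H^2}\big|\lesssim\|\ep^{(1)}\|_{H^2}\|\zeta\|_{H^2}\|\zeta_s\|_{\dot H^3}$ (now the $\dot H^3$ factor is carried by $\zeta_s$ and \emph{can} be absorbed); for the $P_u$ part use the smoothing property $P_u:H^1\to H^2$ from Proposition \ref{proposition: spectral properties of L}(2), which only requires the $H^1$ bound on the nonlinearity. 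Without this device the $\calE_2$ inequality does not close. A second, minor bookkeeping point: the $\dot H^2$ estimate inherits the term $C_0^2\|\zeta_s\|_{H^1}^2$ from \eqref{H4liesti} with a constant that is \emph{not} small, so you cannot directly absorb the full master quantity $\mathcal M$ with equal weights; you must either first substitute the closed $H^1$ bound into the $\dot H^2$ bound, or (as the paper does) absorb the weighted combination $\calA+\calE_1+\tfrac{1}{2C_0}\calE_2$. This is repairable, but as written the claim "$e^{\frac12\dg\tau}\|\zeta_s(\tau)\|_{\dot H^2}\lesssim\|\zeta_s(0)\|_{H^2}+\bar\delta\,\mathcal M$" skips a step.
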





\begin{proof}[Proof of Proposition \ref{propLipde}]
    Recalling Proposition \ref{propbootstrap}, for any $\ep_{s0}^{(0)}$ and $\ep_{s0}^{(1)}$ satisfying \eqref{bootstrapinitial}, we can always find $\ep_{u0}^{(0)}$ and $\ep_{u0}^{(1)}$ such that the solutions to the system \eqref{linearizedeqV2} with initial datum $\Psi_0^{(i)} = Q + \ep_{s0}^{(i)} +\ep_{u0}^{(i)}$ $(i=1,2)$ satisfy the properties \eqref{bootstrapstaH3}, \eqref{bootstrapstaH5} and \eqref{bootstrapuns}. And for each $i \in \{ 1 , 2 \}$, $\ep^{(i)} = \eps^{(i)} + \epu^{(i)}$ solves the equation \eqref{linearizedeq}. As for the difference between $\ep^{(0)}$ and $\ep^{(1)}$, which is denoted by $\ep^D = \ep^{(0)} - \ep^{(1)}$, satisfies the equation
\be
	\partial_\tau \ep^D = \calL \ep^D + \na \cdot \left( \ep^D \na \Delta^{-1} \ep^{(0)} \right) - \na \cdot \left( \ep^{(1)} \na \Delta^{-1} \ep^D \right),
	\label{eqepdif}
\ee
where, out of simplicity, we denote the nonlinear term by
\be
N_D(\ep^D) = \na \cdot \left( \ep^D \na \Delta^{-1} \ep^{(0)} \right) - \na \cdot \left( \ep^{(1)} \na \Delta^{-1} \ep^D \right).
\label{difference: nonlinear term}
\ee
Next, we introduce
\be
\calA := \sup_{\tau \ge 0} e^{\frac{7}{10} \dg \tau} \big\| \ep_u^D \big\|_{\tilde B}, \quad \calE_1 := \sup_{\tau \ge 0} e^{\frac{\dg}{2} \tau} \big\| \ep_s^D \big\|_{H^1}, \quad
\calE_2 := \sup_{\tau \ge 0} e^{\frac{\dg}{2} \tau} \big\| \ep_s^D \big\|_{\dot H^2}.
\label{eqdefcalAE}
\ee

\noindent 
\underline{\textit{Estimate of $\ep_s^D:= P_s \ep^D$.}} If we project the equation \eqref{eqepdif} onto $H_s^1(\RR^3)$, then we find that $\ep_s^D$ solves the equation
\be
\partial_\tau \ep_s^D = \calL \ep_s^D + P_s N_D(\ep^D),
\label{eqepdifsta}
\ee
which implies that
\[
\ep_s^D(\tau) = e^{\tau \calL } \ep_{s0}^D + \int_0^\tau e^{(\tau -s ) \calL } P_s N_D(\ep^D)(s) ds.
\]
By semigroup estimate \eqref{decay rate: stable part}, it yields that
\begin{align*}
	\big\| \ep_s^D(\tau) \big\|_{H^1}
	\le C e^{-\frac{\dg}{2} \tau} \big\| \ep_{s0}^D \big\|_{H^1} + C \int_0^\tau e^{-\frac{\dg}{2}(\tau-s)} \big\| N_D (\ep^D) \big\|_{H^1}(s) ds,
\end{align*}
and the nonlinear term, similar to the proof of \eqref{nltermesti}, can be estimated by
\begin{align}
	 \| N_D (\ep^D) \|_{H^1}
     \lesssim \| \ep^D \|_{H^2} \left( \| \ep^{(0)} \|_{H^2} + \| \ep^{(1)} \|_{H^2}\right).
     \label{Lipnonlinearesti}
\end{align}
Thus,
\begin{align}
   \big\| \ep_s^D(\tau) \big\|_{H^1}	
   & \lesssim e^{-\frac{\dg}{2} \tau} \big\| \ep_{s0}^D \big\|_{H^1} + \int_0^\tau e^{-\frac{\dg}{2} (\tau-s)} \big\| \ep^D \big\|_{H^2} \left( \big\| \ep^{(0)} \big\|_{H^2} + \big\| \ep^{(1)} \big\|_{H^2}\right) ds \notag\\
   & \lesssim e^{-\frac{\dg}{2} \tau} \big\| \ep_{s0}^D \big\|_{H^1} + \left( \delta_1+ \delta_2 + \delta_3 \right)  \int_0^\tau e^{-\frac{\dg}{2} (\tau-s)} \left( \calA + \calE_1 + \calE_2 \right) e^{-\dg s} ds \notag\\
   & \lesssim e^{-\frac{\dg}{2} \tau} \big\| \ep_{s0}^D \big\|_{H^1} + \bar\delta (\calA + \calE_1 + \calE_2) e^{-\frac{\dg}{2} \tau},\qquad \forall \,\, \tau \ge 0,
   \label{diffstaE1}
\end{align}
where the coefficients of the estimates are all uniform in $\ep^{(i)}$ and $\delta_j$.

\noindent
\underline{\textit{$\dot H^2(\RR^3)$ estimate of $\epu^D:= P_u \ep^D$.}} Recall \eqref{H4liesti}, we know that there exists $C_0>0$ such that
\[
(\calL \ep_{s}^D, \ep_s^D)_{\dot H^2(\RR^3)}\le -\frac{7}{8} \| \eps^D \|_{\dot H^3}^2 - \frac{5}{4} \| \eps^D \|_{\dot H^2}^2 + C_0^2 \| \eps^D\|_{H^1}^2.
\]

When it comes to the nonlinear estimate, by the same argument as Lemma \ref{lem: nonlinear norm Hk}, the first term of $N_D(\ep^D)$ defined in \eqref{difference: nonlinear term} can be estimated by
\begin{align*}
    &  \quad  \Big|\left( P_s \left( \na \cdot \left( \ep^D \na \Delta^{-1} \ep^{(0)} \right) \right), \ep_s^D \right)_{\dot H^2} \Big| \lesssim \| \ep^{(0)} \|_{H^2} \| \eps^D \|_{H^2} \| \ep^D \|_{H^3}.
\end{align*}

However, since we do not have enough $H^3$ estimate of each $\ep^{(i)}$, we need to put extra effort into the second term in \eqref{difference: nonlinear term}. Precisely,
\be
\begin{split}
  &\left( P_s \left( \na \cdot  \left(  \ep^{(1)} \na \Dein \ep^D \right) \right), \ep_s^D \right)_{\dot H^2}\\
     =&\left(  \na \cdot  \left(  \ep^{(1)} \na \Dein \ep^D \right), \ep_s^D \right)_{\dot H^2} - \left( P_u \left( \na \cdot \left( \ep^{(1)} \na \Dein \ep^D \right)\right), \ep_s^D \right)_{\dot H^2},\end{split}
     \label{H2 estimate: difference}
\ee
where the second term, recalling the boundedness $P_u: H^1 \to H^2$ from Proposition \ref{proposition: spectral properties of L} (2) with $k = 1$, can be bounded by
\begin{align*}
    & \quad \Big| \left( P_u \left(  \na \cdot  \left(  \ep^{(1)} \na \Dein \ep^D \right)  \right), \ep_s^D \right)_{\dot H^2} \Big|
    \le \Big\| P_u \left(  \na \cdot  \left(  \ep^{(1)} \na \Dein \ep^D \right)  \right) \Big\|_{H^2} \| \eps^D \|_{H^2}  \\
    & \le \Big\| \na \cdot  \left(  \ep^{(1)} \na \Dein \ep^D \right)   \Big\|_{H^1} \cdot  \| \ep_s^D \|_{H^2}
    \lesssim \| \ep^{(1)} \|_{H^2} \| \ep ^D\|_{H^2} \| \ep_s^D \|_{H^2},
\end{align*}
where the last inequality holds by the similar proof to Lemma \ref{lem: nonlinear norm Hk}. When it comes to the first term in \eqref{H2 estimate: difference}, by integration by parts and then following the same argument as Lemma \ref{lem: nonlinear norm Hk}, we obtain that
\begin{align*}
     \Big| \left( \na \cdot  \left(  \ep^{(1)} \na \Dein \ep^D \right) , \ep_s^D \right)_{\dot H^2} \Big| 
    \lesssim \| \ep^{(1)} \|_{H^2} \| \ep^D \|_{H^2} \| \ep_s^D \|_{H^3}.
\end{align*}
Consequently, we conclude that
\begin{align*}
    & \left( P_s \left( N_D(\ep^D) \right), \ep_s^D\right)_{\dot H^2}
     \le  C \left( \|\ep^{(0)} \| _{H^2}  +  \|\ep^{(1)} \| _{H^2}\right) \left( \| \eps^D \|_{H^2}^2 +  \| \epu^D \|_{\tilde B}^2 + \| \ep_s^D \|_{\dot H^3}^2 \right)\\
     \le & C \bar \delta \left( \calA^2 + \calE_1^2 + \calE_2^2 \right) e^{-\frac{3\delta_g}{2} \tau} +  C \bar \delta  \| \ep_s^D \|_{\dot H^3}^2,
\end{align*}
with $C>0$ a uniform constant. Recalling \eqref{eqdefcalAE}, this yields that
\begin{align*}
    & \quad \frac{1}{2} \frac{d}{d\tau} \| \ep_s^D \|_{\dot H^2}^2
     = \left( -\calL \ep_s^D + P_s \left( N_D(\ep^D) \right), \ep_s^D \right)_{\dot H^2} \\
    & \le -\frac{7}{8} \| \eps^D \|_{\dot H^3}^2 - \frac{5}{4} \| \eps^D \|_{\dot H^2}^2 + C_0^2 \| \eps^D\|_{H^1}^2 
    + C \bar \delta \left( \calA^2 + \calE_1^2 + \calE_2^2 \right) e^{-\frac{3\delta_g}{2} \tau} +  C \bar \delta  \| \ep_s^D \|_{\dot H^3}^2 \\
    & \le - \frac{5}{4} \| \eps^D \|_{\dot H^2}^2 + C_0^2 \calE_1^2 e^{-\delta_g \tau} +  C \bar \delta \left( \calA^2 + \calE_1^2 + \calE_2^2 \right) e^{-\frac{3\delta_g}{2} \tau},
\end{align*}
which, similar to the proof of Lemma \ref{lemimproveH5}, yields that
\be
\| \ep_s^D(\tau) \|_{\dot H^2}^2 \le e^{-\delta_g \tau} \| \ep_{s0}^D \|_{\dot H^2}^2 +  C_0^2 \calE_1^2 e^{-\delta_g \tau} +  C \bar \delta \left( \calA^2 + \calE_1^2 + \calE_2^2 \right) e^{-\frac{3\delta_g}{2} \tau}, \quad \forall \; \tau \ge 0.
\label{diffstaE2}
\ee

\vspace{0.5cm}

\noindent
\underline{\textit{Semigroup estimate of $\epu^D:= P_u \ep^D$.}} If we project the equation \eqref{eqepdif} onto $H_u^1(\RR^3)$, then $\epu^D$ solves the equation
\be
\partial_\tau \epu^D = \calL \epu^D  + P_u N_D(\ep^D),
\label{equnst}
\ee
and $\epu^D$ can be written as $\epu^D(\tau,\cdot) = \sum_{j=0}^4 a_j(\tau) \varphi(\cdot)$. Since $\{\psi_j\}_{j=0}^3$ satisfies \eqref{Puinnerproduct}, after taking inner product with $\psi_j$ in $H^1$ onto both sides of \eqref{equnst}, then $a_j$ solves the following ODE:
\begin{align*}
	\dot a_j = z_j a_j + \left( P_u N_D(\ep^D), \psi_j\right)_{H^1}, \quad \forall \; 0 \le j \le 3,
\end{align*}
with $z_0=\frac{1}{2}$ and $z_j=1$ with $1 \le j \le 3$, so $a_j$ can be written by
\begin{align}
& \quad a_j(\tau) 
 = e^{z_j \tau}a_j(0) + \int_0^\tau e^{z_j (\tau-s)}\left( P_u N_D(\ep^D), \psi_j\right)_{H^1}(s) ds \notag\\
& = e^{z_j \tau} \left( a_j(0) + \int_0^\infty e^{-z_j s} \left( P_u N_D(\ep^D), \psi_j\right)_{H^1}(s) ds \right) - \int_\tau^\infty e^{z_j (\tau-s)}\left( P_u N_D(\ep^D), \psi_j\right)_{H^1}(s) ds.
\label{difequnstable}
\end{align}
With the bootstrap assumption \eqref{bootstrapuns}, the unstable part must vanish, i.e.
\[
a_j(0) + \int_0^\infty e^{-z_j s} \left( P_u N_D(\ep^D), \psi_j\right)_{H^1}(s) ds=0,
\]
hence, by using \eqref{Lipnonlinearesti} and \eqref{eqdefcalAE}
\begin{align}
	|a_j(0)| &=  \Big| \int_0^\infty e^{-z_j s} \left( P_u N_D(\ep^D), \psi_j\right)_{H^1}(s) ds \Big| \notag\\
	& \lesssim \int_0^\infty e^{-z_j s}  \| \ep^D(s) \|_{H^2} \left(  \| \ep^{(0)}(s) \|_{H^2} + \| \ep^{(1)}(s) \|_{H^2}\right) ds 
	\notag\\
    & \lesssim \bar \delta (\calA + \calE_1 + \calE_2)\int_0^\infty e^{-z_j s} e^{-\delta_g s} ds \lesssim \bar\delta \left( \calA + \calE_1 + \calE_2 \right).
    \label{Lipaj0}
\end{align}
Similarly, from the equation \eqref{difequnstable}, the evolution of $a_j$ can be controlled by
\begin{align}
	|a_j(\tau)| 
	\le \int_\tau^\infty e^{ z_j (\tau-s)} \| \ep^D \|_{H^2} \left( \| \ep^{(0)} \|_{H^2} + \| \ep^{(1)} \|_{H^2} \right) ds
	\lesssim \bar \delta (\calA + \calE_1 + \calE_2) e^{-\frac{7}{10}\delta_g \tau}.
	\label{difesiunsta}
\end{align}

Consequently, combining the estimates \eqref{diffstaE1}, \eqref{diffstaE2}, \eqref{Lipaj0} and \eqref{difesiunsta} implies
\be
\begin{cases}
\calE_1 \le C \big\| \ep_{s0}^D \big\|_{H^1} + C\bar\delta (\calA + \calE_1 + \calE_2), & \quad \calE_2 \le C  \| \ep_{s0}^D \|_{\dot H^2} + C_0 \calE_1 +  C \sqrt{\bar \delta} \left( \calA + \calE_1+\calE_2 \right), \\
 \calA \le C\bar \delta (\calA + \calE_1 + \calE_2), & \quad \big\| \ep_{u0}^D \big\|_{\tilde B}  \le C \bar \delta (\calA + \calE_1 + \calE_2),
\end{cases}
\label{result estimate diff}
\ee
for some universal constants $C_0, C>0$. With $\bar \delta$ small enough, we have
\begin{align*}
    & \quad \calA + \calE_1 + \frac{1}{2 C_0} \calE_2  \\
    & \le C \bar \delta \left( \calA + \calE_1 + \calE_2 \right)
    +  C \big\| \ep_{s0}^D \big\|_{H^1} 
    + \frac{1}{2 C_0} \left( C  \| \ep_{s0}^D \|_{\dot H^2} + C_0 \calE_1 +  C \sqrt{\bar \delta} \left( \calA + \calE_1 +\calE_2 \right) \right) \\
    & \le \frac{1}{2} \left( \calA + \calE_1 + \frac{1}{2C_0}\calE_2 \right) + \left( C + \frac{C}{2 C_0} \right) \| \ep_{s0}^D \|_{H^2}, 
\end{align*}
which yields that
\[
\calA + \calE_1 + \calE_2 \lesssim \| \ep_{s0}^D \|_{H^2},
\]
with the coefficient uniformly in $\delta_j$ and $\ep^{(i)}$. Hence using \eqref{result estimate diff} again, we conclude
\begin{align*}
    \big\| \ep_{u0}^D \big\|_{\tilde B}  \lesssim \bar \delta (\calA + \calE_1 + \calE_2) \lesssim \bar{\delta} \| \ep_{s0}^D \|_{H^2},
\end{align*}
and hence we have concluded the proof.
\end{proof}

\subsection{Proof of Theorem \ref{thm: stability of ss solu}}
\label{subsefullsta}

First of all, based on Proposition \ref{propbootstrap} and Proposition \ref{propLipde}, we have the following finite-codimensional stability result: 
\begin{corollary}[Finite-codimensional stability]
\label{corcodsta}
        As for $\delta_0 \ll 1$ given in \eqref{bootstrapinitial}, there exists a Lipschitz map
        \[
    \Phi: B_{\delta_0}^{H^2(\RR^3)} (0)  \cap H_s^1(\RR^3) \to H_u^1(\RR^3),
    \]
    constructed in Proposition \ref{propLipde} and satisfying the bound \eqref{Lipsconti}, such that for any $\ep_{s0} \in B_{\delta_0}^{H^2(\RR^3)} (0)  \cap H_s^1(\RR^3) \to H_u^1(\RR^3)$, the solution to \eqref{equation, Keller-Segel} with initial data
    \[
    \rho_0 = Q + \ep_{s0} + \Phi(\ep_{s0}),
    \]
    blows up at $T=1$ with
    \be
    \rho(t,x) = \frac{1}{1-t} (Q+\ep)\left( t, \frac{x}{\sqrt{1-t}} \right), \quad \forall \;t \in (0,1),
    \label{finitecodista}
    \ee
    and
    \[
    \| \ep (t) \|_{H^2} \lesssim (1-t)^{\epsilon^*}, \quad \forall \; t \in (0,1),
    \]
    with $\epsilon_* >0$ independent of $\e_{s0}$.
\end{corollary}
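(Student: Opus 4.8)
The plan is to assemble Proposition \ref{propbootstrap} and Proposition \ref{propLipde} and then undo the self-similar renormalization \eqref{sscoordi}; no new analysis is needed. First I would let $\Phi$ be exactly the map furnished by Proposition \ref{propbootstrap}, whose Lipschitz bound \eqref{Lipsconti} is the content of Proposition \ref{propLipde}. Fix any $\ep_{s0}\in B_{\delta_0}^{H^2(\RR^3)}(0)\cap H_s^1(\RR^3)$; by Proposition \ref{propbootstrap} there is $\ep_{u0}=\Phi(\ep_{s0})\in H_u^1(\RR^3)$ such that the renormalized equation \eqref{eqrenormalized} with data $\Psi(0,\cdot)=Q+\ep_{s0}+\ep_{u0}$ admits a global-in-$\tau$ solution $\Psi=Q+\ep$, $\ep=\ep_s+\ep_u$, obeying \eqref{bootstrapstaH3}--\eqref{bootstrapuns}. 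Combining these bounds with $H^2(\RR^3)\hookrightarrow L^\infty(\RR^3)$, the equivalence $\|\cdot\|_{\tilde B}\sim\|\cdot\|_{H^2}$ from Proposition \ref{proposition: spectral properties of L}(4), and $\delta_1,\delta_2,\delta_3\ll1$ yields
\[
\|\ep(\tau)\|_{H^2}\le\|\ep_s(\tau)\|_{H^1}+\|\ep_s(\tau)\|_{\dot H^2}+\|\ep_u(\tau)\|_{H^2}\lesssim(\delta_1+\delta_2)e^{-\frac{\delg}{2}\tau}+\delta_3 e^{-\frac{7}{10}\delg\tau}\lesssim e^{-\frac{\delg}{2}\tau},
\]
with implicit constant independent of $\ep_{s0}$.

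Next I would transfer this long-time control back to the physical time variable. With $\mu$ normalized by $\mu(0)=1$ so that $\Psi(0,\cdot)=\rho_0:=Q+\ep_{s0}+\ep_{u0}$, integrating $\mu_\tau/\mu=-\tfrac12$ gives $\mu(\tau)=e^{-\tau/2}$, and then $dt/d\tau=\mu^2=e^{-\tau}$ with $\tau|_{t=0}=0$ gives $t=1-e^{-\tau}$; hence $1-t=e^{-\tau}$, $\mu=\sqrt{1-t}$, and $\tau=-\ln(1-t)\to\infty$ as $t\to1^-$. Thus the global renormalized solution produces a solution $\rho$ of \eqref{equation, Keller-Segel} on $[0,1)$ of the form \eqref{finitecodista}, and setting $\epsilon^*:=\delg/2>0$ the estimate above reads $\|\ep(t)\|_{H^2}\lesssim(1-t)^{\epsilon^*}$ with constant independent of $\ep_{s0}$. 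Blowup at $T=1$ is then immediate: at the origin $\rho(t,0)=(1-t)^{-1}\big(Q(0)+\ep(\tau,0)\big)$ with $Q(0)=6$ and $|\ep(\tau,0)|\lesssim\|\ep(\tau)\|_{H^2}\to0$, so $\|\rho(t)\|_{L^\infty}\to\infty$ as $t\to1^-$. The Lipschitz bound \eqref{Lipsconti} for $\Phi$ is inherited verbatim from Proposition \ref{propLipde}.

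Since all of the substantive estimates were already carried out in Sections \ref{subsfinitecodista}--\ref{subsLipschizde}, I do not expect a genuine obstacle; the only points requiring care are the bookkeeping between the self-similar time $\tau$ and the physical time $t$ --- in particular fixing $\mu(0)=1$ so that the blowup time is exactly $T=1$, in contrast with the shifted $\lambda_0$ that appears in Theorem \ref{thm: stability of ss solu} --- and making sure that the $H^2$ norm of $\ep$ in \eqref{finitecodista} is read in the rescaled variable $y$, which is the variable in which \eqref{bootstrapstaH3}--\eqref{bootstrapuns} are formulated.
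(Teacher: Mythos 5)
Your proposal is correct and follows essentially the same route as the paper: the paper's proof likewise fixes $\mu(0)=1$, solves $\mu(\tau)=e^{-\tau/2}$, i.e.\ $\mu(t)=\sqrt{1-t}$ so that the blowup time is $T=1$, and then simply invokes Propositions \ref{propbootstrap} and \ref{propLipde}. Your version merely spells out the bookkeeping (the identification $1-t=e^{-\tau}$, the choice $\epsilon^*=\delta_g/2$, and the $L^\infty$ blowup at the origin) that the paper leaves implicit.
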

\begin{proof}
    Recall the self-similar coordinate transformation given in \eqref{sscoordi}, if we choose $\mu(0) =1$, then $\mu$ can be explicitly solved by $\mu(\tau) = e^{-\frac12 \tau}$ on $\overline{\RR^+}$. Thus by chain rule,
    \[
    \frac{d\mu}{dt} = \frac{d\mu}{d\tau} \frac{d \tau}{dt} = - \frac{1}{2 \mu},
    \]
    which implies that the solution blows up at time $T=1$ in the original coordinate and $\mu$ can be explicitly solved by
    \[
    \mu(t) = \sqrt{1-t}, \quad \forall \; t \in [0,1).
    \]
    Then combining Proposition \ref{propbootstrap} and Proposition \ref{propLipde} yields the result.
\end{proof}

Next, for any initial data $\ep_0$ with $\| \ep_0 \|_{H^2} \ll 1$, we will find a scaling transformation and a spatial translation to shift it onto the stable manifold. This is the crucial step from Corollary \ref{corcodsta} towards Theorem \ref{thm: stability of ss solu}, by compensating the unstable codimensions via the symmetry group of \eqref{equation, Keller-Segel}.

\begin{lemma} 
\label{lemmatching}
Let $\Phi$ be the Lipschitz map in Corollary \ref{corcodsta} and $\delta_0$ from Proposition \ref{propbootstrap}. There exists $\delta \in (0, \delta_0)$, such that for any $\ep_0 \in H^2$ with $\| \ep_0 \|_{H^2} \le \delta$, there always exists $\vec{\alpha}_0 =(\lambda_0 -1, x_0)\in \RR^4$, with 
\be |\vec \alpha_0| \lesssim \| \ep_0\|_{H^2}, \label{eqsmalla0} \ee
such that the function
	\be
\ep_0^{\vec{\alpha_0}}(y) = \lambda_0^2(Q + \ep_0) (\lambda_0 y + x_0) - Q(y),
\label{modulateep}
\ee
satisfies 
\be \Phi \left((1-P_u)\ep_0^{\vec{\alpha_0}} \right) = P_u\ep_0^{\vec{\alpha_0}},
\label{fullstablematc}
\ee 
and
 \be 
 \big\| \ep_0^{\vec{\alpha_0}}  \big\|_{H^2} \le \delta_0.
 \label{smallnessofscaledep0}
 \ee
\end{lemma}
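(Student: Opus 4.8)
The plan is to recast the matching condition \eqref{fullstablematc} as a fixed-point equation on the four-dimensional parameter space $\RR^4$ and solve it by Brouwer's theorem, following the scheme of \cite{li2023stability}. Write $\lambda_0 = 1+a$ and $\vec\alpha = (a, x_0)\in\RR^4$, and decompose
\[
\ep_0^{\vec\alpha} = g(\vec\alpha) + h(\vec\alpha,\ep_0), \qquad g(\vec\alpha)(y) := \lambda_0^2 Q(\lambda_0 y + x_0) - Q(y), \qquad h(\vec\alpha,\ep_0)(y) := \lambda_0^2 \ep_0(\lambda_0 y + x_0).
\]
A direct differentiation at $\vec\alpha = 0$ gives $D_{\vec\alpha}g(0)\,\vec\alpha = a\,\Lambda Q + \sum_{j=1}^3 (x_0)_j\,\partial_{x_j}Q =: L\vec\alpha$ (recalling $\Lambda = y\cdot\na + 2$); since $Q$ is smooth and $Q,\Lambda Q,\partial_{x_j}Q$ together with the relevant higher derivatives all lie in $H^2$, the map $\vec\alpha\mapsto g(\vec\alpha)$ is $C^2$ into $H^2$, so $R(\vec\alpha):=g(\vec\alpha)-L\vec\alpha$ satisfies $\|R(\vec\alpha)\|_{H^2}\lesssim|\vec\alpha|^2$ near $\vec\alpha=0$. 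The key algebraic input is Theorem \ref{thm: mode stability main} (equivalently Proposition \ref{proposition: spectral properties of L}(1)): the four functions $\Lambda Q,\partial_{x_1}Q,\partial_{x_2}Q,\partial_{x_3}Q$ are linearly independent and span $H_u^1 = \text{Ran}\,P_u$. Hence $L\colon\RR^4\to H_u^1$ is a linear isomorphism with bounded inverse, and $(1-P_u)L\vec\alpha = 0$.

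Using this, and that $L\vec\alpha\in H_u^1$, the matching condition \eqref{fullstablematc}, i.e.\ $P_u\ep_0^{\vec\alpha} = \Phi\big((1-P_u)\ep_0^{\vec\alpha}\big)$, is equivalent to the fixed-point equation $\vec\alpha = \calT(\vec\alpha)$, where
\[
\calT(\vec\alpha) := -L^{-1}\Big(P_u R(\vec\alpha) + P_u h(\vec\alpha,\ep_0) - \Phi\big((1-P_u)R(\vec\alpha) + (1-P_u)h(\vec\alpha,\ep_0)\big)\Big).
\]
I would then show $\calT$ maps the closed ball $\overline{B_{K\eta}(0)}\subset\RR^4$ into itself, with $\eta:=\|\ep_0\|_{H^2}$ and $K$ a large absolute constant. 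This uses only norm bounds: $\|R(\vec\alpha)\|_{H^2}\lesssim|\vec\alpha|^2$, $\|h(\vec\alpha,\ep_0)\|_{H^2}\lesssim\eta$ (boundedness of dilations and translations on $H^2$, uniformly for $\lambda_0$ near $1$), the boundedness of $P_u,P_s$ on $H^2$ from Proposition \ref{proposition: spectral properties of L}(2), and the Lipschitz bound \eqref{Lipsconti} with $\Phi(0)=0$ (the trivial solution), giving $\|\Phi(v)\|_{\tilde B}\lesssim\bar\delta\|v\|_{H^2}$ together with $\|\cdot\|_{\tilde B}\sim\|\cdot\|_{H^2}$ on $H_u^1$; one also checks $\|(1-P_u)\ep_0^{\vec\alpha}\|_{H^2}\le\delta_0$ so that $\Phi$ is evaluated inside its domain. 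Collecting these gives $\|\calT(\vec\alpha)\|\le C(1+\bar\delta)(1+K^2\eta)\,\eta$ on the ball, so taking $K=2C$ and then $\delta$ (hence $\eta$ and $\bar\delta$) small enough yields $\|\calT(\vec\alpha)\|\le K\eta$. Continuity of $\calT$ on the ball follows from the smoothness of $g$, the continuity of the dilation–translation action $\vec\alpha\mapsto h(\vec\alpha,\ep_0)$ on $H^2$, and the (Lipschitz) continuity of $\Phi$. Brouwer's fixed-point theorem then produces $\vec\alpha_0\in\overline{B_{K\eta}(0)}$ with $\calT(\vec\alpha_0)=\vec\alpha_0$, which is exactly \eqref{fullstablematc}; the bound $|\vec\alpha_0|\le K\eta = K\|\ep_0\|_{H^2}$ is \eqref{eqsmalla0}, and $\|\ep_0^{\vec\alpha_0}\|_{H^2}\le\|L\vec\alpha_0\|_{H^2}+\|R(\vec\alpha_0)\|_{H^2}+\|h(\vec\alpha_0,\ep_0)\|_{H^2}\lesssim\eta\le\delta_0$ is \eqref{smallnessofscaledep0}.

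The main obstacle is that the scaling–translation action $\vec\alpha\mapsto\ep_0^{\vec\alpha}$ is merely continuous, not Lipschitz or differentiable, on $H^2$: differentiating the action in $\vec\alpha$ costs one derivative, while $\ep_0$ is only in $H^2$. Thus a Banach-type fixed point or an implicit-function-theorem argument in the space where $\Phi$ lives is not directly available. The remedy — and the reason the statement produces $\vec\alpha_0\in\RR^4$ rather than solving in function space — is that the obstruction to matching is $H_u^1\cong\RR^4$-valued, so Brouwer's theorem, which needs only continuity of a self-map of a ball, is enough. A secondary point requiring care is that all constants entering $K$ ($\|L^{-1}\|$, the $C^2$-bound on $R$, the dilation bound for $h$, the $\|\cdot\|_{\tilde B}\sim\|\cdot\|_{H^2}$ equivalence on $H_u^1$, and the implied constants in Proposition \ref{proposition: spectral properties of L}) are independent of $\ep_0$, so that $K$ is genuinely absolute and the implicit constant in \eqref{eqsmalla0} does not depend on $\ep_0$.
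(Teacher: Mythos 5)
Your proposal is correct and follows essentially the same route as the paper: Taylor-expand the symmetry-group action around the identity, identify the linear part with $\mathrm{span}\{\Lambda Q,\nabla Q\}=H_u^1$, reformulate \eqref{fullstablematc} as a fixed-point equation on $\RR^4$, and close with Brouwer (the paper phrases the inversion of your $L$ via the dual basis $\{\psi_j\}$ from \eqref{Puinnerproduct}, and performs the quadratic Taylor estimate in $H^1$ rather than $H^2$, but these are cosmetic differences). Your remark that the group action is only continuous, not Lipschitz, on $H^2$ — forcing Brouwer instead of Banach — is exactly the right justification, and matches the paper's argument.
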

\begin{proof}
 We first show \eqref{eqsmalla0} and \eqref{fullstablematc} imply \eqref{smallnessofscaledep0}, so that the proof is reduced to finding $\vec \a_0$ solving \eqref{fullstablematc}. In fact, by Taylor expansion, with $\vec \Xi := (\Lambda Q, \nabla Q)$ and $\vec \a_0 := (\l_0 - 1, x_0)$, we have
\begin{align}
	\lambda_0^2Q  (\lambda_0 \cdot  + x_0) - Q
	&= \left( \frac{\pa \left( \lambda_0^2 Q(\lambda_0 \cdot + x_0) \right)}{\pa \vec \alpha_0} \Big|_{\vec{\alpha}_0=\vec 0}, \vec \alpha_0 \right)_{\RR^4} + G_{\vec \alpha_0} 
	= \vec \a_0 \cdot \vec \Xi +G_{\vec\alpha_0},
    \label{Taylorexpansionmatching}
\end{align}
where $\| G_{\vec{\alpha}_0} \|_{H^1} \lesssim |\vec \alpha_0|^2$, thus the $H^2$ norm of $\ep_0^{\vec{\alpha}_0}$ \eqref{modulateep} can be controlled by
\[
\big\| \ep_0^{\vec{\alpha}_0} \big\|_{H^2} \le \big\| \lambda_0^2 Q  (\lambda_0 \cdot  + x_0) - Q \big\|_{H^2} + \| \lambda_0^2 \ep_0  (\lambda_0 \cdot  + x_0) \|_{H^2} \lesssim |\vec\alpha_0|  + \| \ep_0 \|_{H^2}.
\]
Thereafter, \eqref{smallnessofscaledep0} follows from \eqref{eqsmalla0} and taking $\delta \ll \delta_0$.

Next, it suffices to verify the existence of $\vec{\alpha}_0$ satisfying \eqref{eqsmalla0} and \eqref{fullstablematc}. In fact, under the conditions given in this lemma, the equation \eqref{fullstablematc} is equivalent to
  \[
P_u \left( \lambda_0^2Q  (\lambda_0 \cdot  + x_0 ) - Q \right) = \Phi \left((1-P_u) \ep_0^{\vec{\alpha_0}} \right) - P_u \left( \lambda_0^2 \ep_0  (\lambda_0 \cdot  + x_0) \right),
\]
then using  \eqref{Taylorexpansionmatching} and $P_u \vec \Xi = \vec \Xi$, the equation \eqref{fullstablematc} can be further rewritten as
\be 
  \vec \a_0 \cdot \vec \Xi = \Phi \left((1-P_u) \ep_0^{\vec{\alpha}_0} \right) - P_u \left( \lambda_0^2 \ep_0  (\lambda_0 \cdot  + x_0) \right) - P_u G_{\vec{\alpha}_0} := \calF_{\vec{\alpha}_0,\ep_0}.
\ee
Recalling \eqref{Puinnerproduct}, after taking $H^1$ inner product with $\psi_j$ onto both sides of the equation, we obtain the following equation:
\[
\vec{\alpha}_0 = \calT \vec{\alpha}_0, 
\]
where $\calT \vec{\alpha}_0$, defined by
\[
 (\calT \vec{\alpha}_0)_0 := \frac{1}{\| \Lambda Q \|_{H^1}} \left( \calF_{\vec{\alpha}_0, \ep_0}, \psi_0 \right)_{H^1},
 \text{ and } 
 (\calT \vec{\alpha}_0)_j := \frac{1}{\| \partial_j Q \|_{H^1}} \left( \calF_{\vec{\alpha}_0, \ep_0}, \psi_j \right)_{H^1}  \; \text{ with } \; 1 \le j \le 3,
\]
is a continuous map and satisfies, via the Lipschitz dependence constructed in Proposition \ref{propLipde},
\begin{align*}
	|\calT \vec \alpha_0| \lesssim \| \calF_{\vec{\alpha}_0,\ep_0} \|_{H^1} \lesssim \bar \delta \| \ep_0^{\vec{\alpha}_0} \|_{H^1} + \| \ep_0 \|_{H^1} + \| G_{\vec{\alpha}_0} \|_{H^1} 
    \lesssim \| \ep_0 \|_{H^2} +
	\bar\delta |\vec \alpha_0| + |\vec \alpha_0|^2.
\end{align*}
Denote the universal constant as $C_*$. With $\bar\delta$ small enough from Proposition \ref{propbootstrap} and Proposition \ref{propLipde}, we can choose $\delta \ll \delta_0 \ll 1$ such that $C_*(\bar \delta + 2\delta C_*) < \frac 12$. Therefore
\[
\calT: B_{2C_* \| \e_0\|_{H^2}}^{\RR^4}(0) \to B_{C_* \| \e_0\|_{H^2}\left(1 + 2C_*(\bar \delta + 2C_* \| \e_0\|_{H^2}) \right)}^{\RR^4}(0) \subset B_{2C_* \| \e_0\|_{H^2}}^{\RR^4}(0),
\]
which implies that $\calT$ obtains a fixed point in $\vec \a_0 \in B_{2C_* \| \e_0\|_{H^2}}^{\RR^4}(0)$ via the Brouwer fixed point theorem and we thus have concluded the proof.
\end{proof}

Finally, we are ready to finish the proof of Theorem \ref{thm: stability of ss solu}.
\begin{proof}[Proof of Theorem \ref{thm: stability of ss solu}]
    Let $0< \delta \ll 1$ given in Lemma \ref{lemmatching}. For any $\ep_0 \in H^2$ with $\| \ep_0 \|_{H^2} \le \delta$, let $\rho^{\vec \alpha_0}$ be the solution to \eqref{equation, Keller-Segel} with initial data
    \[
    \rho_0^{\vec \alpha_0} := Q + \ep_0^{\vec{\alpha_0}}= \lambda_0^2(Q + \ep_0) (\lambda_0 \cdot + x_0),
    \]
    with $\vec \alpha_0$ determined in Lemma \ref{lemmatching}. Then Corollary \ref{corcodsta} and \eqref{fullstablematc} imply that $\rho^{\vec \a_0}$ blows up at time $T=1$ with
    \be
    \rho^{\vec \alpha_0}(t,x) = \frac{1}{1-t} (Q+\ep^{\vec \alpha_0})\left( t, \frac{x}{\sqrt{1-t}} \right), \quad \forall \;t \in (0,1),
    \label{finitecodista}
    \ee
    and
    \[
    \| \ep^{\vec \alpha_0} (t) \|_{H^2} \lesssim (1-t)^{\epsilon^*}, \quad \forall \; t \in (0,1).
    \]
    In addition, by invariance of \eqref{equation, Keller-Segel} and the local wellposedness of \eqref{equation, Keller-Segel}, the function
    \begin{align*}
      \rho(t,x) 
      &= \lambda_0^{-2} \rho_0^{\vec \alpha_0} \left( \frac{t}{\lambda_0^2},  \frac{x-x_0}{\lambda_0}\right) 
       =\frac{1}{\lambda_0^2 -t} \left( Q + \epsilon^{\vec \alpha_0} \right) \left( \frac{t}{\lambda_0^2},  \frac{x- x_0 }{ \sqrt{\lambda_0^2 -t} } \right),
    \end{align*}
    is the unique solution to \eqref{equation, Keller-Segel} with initial dat $\rho_0 = Q + \ep_0$, which satisfies \eqref{fullstabblowup} and \eqref{smallnessremainingterm} if we denote $\ep(t,\cdot ) := \ep^{\vec \alpha_0} \left( \frac{t}{\lambda_0^2}, \cdot \right)$, and hence we have finished the proof.
\end{proof}

\appendix
\section{Decay estimate of the unstable eigenfunction}\label{appA}
\begin{lemma}[Decay estimate of unstable eigenmodes]
\label{lemmadecayatinifty}
If $f \in H^2(\RR^3)$ is an eigenfunction of $\calL$ with respect to the eigenvalue $z$ with $\Re z < \frac{1}{4}$, namely, $\calL f = z f$, then we have the pointwise estimate of $f$ by
\be
|f(x)| \le C\left(\| f\|_{H^2},z \right) \frac{1}{\la x \ra^{\min\{ 2, 2(1-\Re z)- \}}}.
\label{pwofeigenfunction}
\ee
\end{lemma}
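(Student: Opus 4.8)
The plan is to derive a linear elliptic inequality for $|f|$ from the eigenvalue equation $\calL f = zf$ and run a barrier/comparison argument at spatial infinity, exploiting that the profile $Q$, its gradient $\nabla Q$, and the Newtonian potentials $\nabla\Delta^{-1}Q$ all decay. First I would rewrite the eigenvalue equation as
\be
-\Delta f + \frac12 y\cdot\nabla f + \Big(\frac34 - z\Big) f = 2 Q f + \nabla\Delta^{-1}Q\cdot\nabla f + \nabla Q\cdot\nabla\Delta^{-1}f,
\label{eqeigeneqrewrite}
\ee
using $\Lambda = y\cdot\nabla + \tfrac32$. Since $f\in H^2(\RR^3)\hookrightarrow L^\infty$ with $f\to 0$ at infinity (indeed $f\in H^\infty$ by the smoothing estimate \cite[Lemma 2.9]{ksnsblowup}, so all its derivatives decay), and since $Q\lesssim \la y\ra^{-2}$, $|\nabla Q|\lesssim \la y\ra^{-3}$, $|\nabla\Delta^{-1}Q|\lesssim \la y\ra^{-2}$ from \eqref{profile}, the right-hand side is a lower-order perturbation for large $|y|$: each term is $o(1)\cdot(|f|+|\nabla f|) $ plus the genuinely nonlocal piece $\nabla Q\cdot\nabla\Delta^{-1}f$, which I would bound pointwise by $|\nabla Q(y)|\,\|\nabla\Delta^{-1}f\|_{L^\infty}\lesssim \la y\ra^{-3}\|f\|_{H^2}$ (the same potential-estimate as in Lemma \ref{lem: nonlinear norm Hk}), hence it decays even faster than any polynomial barrier we need.

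The core is then a maximum-principle comparison on the exterior region $\{|y|>R\}$ for $R$ large. Take the comparison function $\phi_\sigma(y) = \la y\ra^{-\sigma}$ with $\sigma < 2(1-\Re z)$ (and $\sigma\le 2$), and compute
\[
\Big(-\Delta + \tfrac12 y\cdot\nabla + \big(\tfrac34-\Re z\big)\Big)\phi_\sigma
= \Big(\tfrac34 - \Re z + \tfrac{\sigma}{2} - \tfrac{\sigma}{2}\la y\ra^{-2} + O(\la y\ra^{-2})\Big)\phi_\sigma,
\]
where the dominant bracket tends to $\tfrac34 - \Re z + \tfrac\sigma2$. The key algebraic point is the drift coefficient: $\tfrac12 y\cdot\nabla \la y\ra^{-\sigma} = -\tfrac\sigma2 \la y\ra^{-\sigma}(1 - \la y\ra^{-2})$, so the effective zeroth-order coefficient is $\tfrac34 - \Re z - \tfrac\sigma2 + O(\la y\ra^{-2})$; this is \emph{positive} precisely when $\sigma < 2(\tfrac34 - \Re z) + \tfrac12 $, i.e.\ $\sigma < 2 - 2\Re z$, which is the claimed rate (the separate cap $\sigma\le 2$ comes from the decay of $Q$ itself, which caps the source term's decay at $\la y\ra^{-2}$, and from the fact that the homogeneous drift equation has solutions decaying like $e^{-|y|^2/4}$ so no faster polynomial rate is forced — the $\min$ in \eqref{pwofeigenfunction} reflects exactly these two competing mechanisms). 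Choosing $R$ large enough that all perturbative terms (including the full right-hand side of \eqref{eqeigeneqrewrite}, estimated as above) are dominated by half of this positive coefficient, $K\phi_\sigma$ with $K = \sup_{|y|=R}|f|/\phi_\sigma(R)$ is a supersolution of the modulus equation $(-\Delta + \tfrac12 y\cdot\nabla + (\tfrac34-\Re z))|f| \le (\text{small})(|f|+|\nabla f|)$ on $\{|y|>R\}$ while $|f|$ is a subsolution; since $|f|\to0$ and $K\phi_\sigma\to0$, the comparison principle for this (second-order, no-sign-condition-needed because the zeroth-order coefficient is positive) elliptic operator on the exterior domain gives $|f|\le K\phi_\sigma$ there. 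On $\{|y|\le R\}$ the bound is trivial from $\|f\|_{L^\infty}\lesssim\|f\|_{H^2}$, so combining yields \eqref{pwofeigenfunction}.

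The main obstacle is handling the gradient term $\nabla f$ that appears on the right-hand side of the modulus inequality: the drift $\tfrac12 y\cdot\nabla f$ and the potential term $\nabla\Delta^{-1}Q\cdot\nabla f$ both involve $\nabla f$, which is not a priori controlled by $|f|$ pointwise, so the naive scalar comparison principle does not immediately close. I would resolve this either by (i) first establishing a weighted gradient bound $|\nabla f(y)|\lesssim \la y\ra^{-1}|f|_{\text{loc}} + \la y\ra^{-\sigma}$ via interior elliptic (Schauder/$L^p$) estimates applied on unit balls $B_1(y)$ — absorbing the $\nabla f$ terms into the subsolution structure — or (ii) more cleanly, by bootstrapping: first prove a crude decay $|f|\lesssim \la y\ra^{-\sigma_0}$ for some small $\sigma_0>0$ using only the coercivity already implicit in the spectral setup, then use interior estimates to upgrade to $|\nabla f|\lesssim \la y\ra^{-\sigma_0}$, feed this back into \eqref{eqeigeneqrewrite}, and iterate $\sigma_0\mapsto \min\{\sigma_0 + \text{gap}, 2-2\Re z-, 2\}$ finitely many times. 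The nonlocal term $\nabla Q\cdot\nabla\Delta^{-1}f$ causes no trouble in this scheme since $\nabla\Delta^{-1}f$ is globally bounded by $\|f\|_{H^2}$ and the decay of $\nabla Q$ makes it subordinate; the only genuinely delicate bookkeeping is tracking that the source terms never beat $\la y\ra^{-2}$, which is what produces the $\min\{2, 2(1-\Re z)-\}$ and the strict-inequality ``$-$'' in the exponent (one loses an endpoint because the borderline $\sigma = 2(1-\Re z)$ makes the effective coefficient vanish to leading order).
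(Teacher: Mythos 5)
Your route (a barrier/comparison argument for the drift operator on an exterior domain) is genuinely different from the paper's, which inverts $\calL_0-z$ explicitly via the Mehler/heat-kernel formula $f=\int_0^\infty e^{-(1-z)\tau}\bigl(G_{1-e^{-\tau}}*F\bigr)(e^{-\tau/2}\cdot)\,d\tau$ and reads off the decay from the $\tau$-integral, with an integration by parts to handle the $\nabla f$ in $F_2=\nabla f\cdot\nabla\Delta^{-1}Q$ and a one-step bootstrap. A maximum-principle proof could in principle work, but as written your argument has a genuine error at its core. The paper defines $\Lambda=\Lambda_0+\tfrac12$ with $\Lambda_0=y\cdot\nabla+\tfrac32$, so $\tfrac12\Lambda=\tfrac12 y\cdot\nabla+1$ and the constant in the rewritten eigenvalue equation is $1-z$, not $\tfrac34-z$. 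With your constant, the effective zeroth-order coefficient of the barrier computation is $\tfrac34-\Re z-\tfrac{\sigma}{2}$, which is positive only for $\sigma<\tfrac32-2\Re z$; the ``$+\tfrac12$'' you then insert to reach $\sigma<2-2\Re z$ is unjustified and does not follow from anything you computed. The two mistakes happen to cancel: with the correct constant $1-z$ the positivity condition is exactly $\sigma<2(1-\Re z)$, but the derivation you give is internally inconsistent and, taken at face value, proves a strictly weaker decay rate.

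Two further points would need repair before this closes. First, you assert $|\nabla\Delta^{-1}Q|\lesssim\la y\ra^{-2}$; in fact $\partial_r\Delta^{-1}Q=-D_2^{-1}Q\sim -4r^{-1}$ (the mass of $Q$ inside radius $r$ grows linearly), and the paper explicitly flags this $\la x\ra^{-1}$ decay as the reason its own proof needs a bootstrap. Consequently $\nabla\Delta^{-1}Q\cdot\nabla f$ cannot be dismissed as a fast-decaying source times $\|\nabla f\|_{L^\infty}$; it should instead be absorbed into the drift, where it is harmless for the maximum principle and contributes only $O(\la y\ra^{-2})\phi_\sigma$ to the supersolution computation. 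Second, the gradient issue you flag is real but your proposed fix via interior estimates on unit balls does not close as stated, because the drift coefficient $\tfrac12|y|$ is unbounded: interior estimates must be run on balls of radius $\sim|y|^{-1}$ (or after the rescaling adapted to the drift), which yields $|\nabla f(y)|\lesssim|y|\sup_{B_{1/|y|}(y)}|f|$ and changes the bookkeeping of your iteration. Keeping all first-order terms inside the operator, treating $2Q=O(\la y\ra^{-2})$ as a potential perturbation, and using only the genuinely nonlocal piece $\nabla Q\cdot\nabla\Delta^{-1}f=O(\la y\ra^{-3}\|f\|_{H^2})$ as a source (together with a Kato-type inequality to pass from the complex-valued $f$ to a differential inequality for $|f|$) would make the comparison argument work without any gradient bootstrap; but none of this is in the proposal as written.
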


\begin{proof}
Since the unstable mode satisfies the equation
\[
\calL f= -\Delta f + \frac{1}{2} \Lambda f - 2 Q f - \nabla \Delta^{-1} Q \cdot \na f - \na \Delta^{-1} f \cdot \na Q = z f, 
\]
with $ \Re z < \frac{1}{4}$, 
then it means that
\[
\left( \calL_0 - z \right) f
= 2Qf + \na \Delta^{-1} Q \cdot \na f + \na Q \cdot \na \Delta^{-1} f, \quad \Rightarrow \quad f = (\calL_0 - z)^{-1} F,
\]
where $F$ is defined by 
\[
F= F_1 + F_2, \quad \text{ with } \quad  F_1 = 2Qf + \na Q \cdot \na \Delta^{-1} f, \quad  F_2 = \na f \cdot \na \Dein Q,
\]
and $(\calL_0 -z)^{-1}$, recalling the formula for $(\calL_0 -z)^{-1}$ from the proof of \cite[Lemma $2.2$]{ksnsblowup}, is given by
\be
f=(\calL_0 - z)^{-1} F = \int_0^\infty e^{-(1-z) \tau} \left( G_{1-e^{-\tau}} *F \right) \left( e^{-\frac{\tau}{2}} \cdot \right) d\tau,
\label{fintegralfomu}
\ee
with 
\be 
G_\lambda(x) = \lambda^{-\frac{3}{2}} G_1(\lambda^{-\frac{1}{2}} x )  \text{ and } G_1(x) =(4 \pi)^{-\frac{3}{2}} e^{-\frac{|x|^2}{4}}.
\label{Glambda}
\ee

\noindent \textit{Step 1. Pointwise estimate of $G_\lambda * F_1$.} First of all, let us consider the estimate of the nonlocal term $\na \Delta^{-1} f$, which can be written as
\[
\na \Delta^{-1} f =  \na \int_{\RR^3} \frac{Cf(y)}{|x-y|}dy  = C\int_{\RR^3} \frac{x-y}{|x-y|^3} f(y) dy,
\]
We split into $\{|y| \ge 2|x|\}$ and $\{ |y| \le 2|x|\}$ to compute 
\begin{align}
 &|\na \Delta^{-1} f(x)| \lesssim  \| f \|_{L^\infty} \int_{|y-x| \le 3 |x|} \frac{1}{|y-x|^2} dy  + \int_{|y| \ge |x|} \frac{1}{|y|^2} f(y) dy \nonumber \\
 \lesssim& |x|  \| f \|_{L^\infty} + \left\| |\cdot|^{-2} \right\|_{L^2(\RR^3 - B_{|x|})} \| f \|_{L^2} 
 \lesssim \left( |x| + |x|^{-\frac 12}\right)\| f \|_{H^2}.\nonumber 
\end{align}
Therefore with $|Q| \lesssim \la x \ra^{-2}$ and $|\nabla Q| \lesssim |x| \la x \ra^{-4}$ from \eqref{profile}, 
\be |F_1| \lesssim |Qf| + |\nabla Q \cdot \nabla \Delta^{-1} f| \lesssim \la x \ra^{-2} \| f \|_{H^2}. \label{Fpwdecay} \ee

Next, from the definition of $G_\lambda$ \eqref{Glambda} and \eqref{Fpwdecay}, we compute
\begin{align*}
&|G_\lambda * F_1(x)|  \lesssim \int_{\RR^3} \lambda^{-\frac{3}{2}} e^{-\frac{|\lambda^{-\frac{1}{2}}x-\lambda^{-\frac{1}{2}}y|^2}{4}} |F_1(y)| dy \\
 \lesssim& \int_{\RR^3} e^{-\frac{|\lambda^{-\frac{1}{2}} x -y|^2}{4}} |F_1(\lambda^\frac{1}{2} y)| dy 
 \lesssim \| f \|_{H^2} \int_{\RR^3} e^{-\frac{|\lambda^{-\frac{1}{2}} x -y|^2}{4}}	 \frac{1}{\la \lambda^\frac{1}{2}y \ra^2} dy.
\end{align*}
Similarly, we split the region of the integration into $\{|y- \lambda^{-\frac{1}{2}} x|  \le \frac{1}{2} |\lambda^{-\frac{1}{2}} x| \}$ and $\{|y- \lambda^{-\frac{1}{2}} x|  \ge \frac{1}{2} |\lambda^{-\frac{1}{2}} x| \}$ to estimate 
\begin{align}
  |G_\l* F_1(x)|& \lesssim \left[ \la x \ra^{-2} \int e^{-\frac{|\lambda^{-\frac{1}{2}} x -y|^2}{4}} dy  + \int_{|\tilde y| \ge |\l^{-\frac 12} x|} e^{-\frac{|\tilde y|^2}{4}} d\tilde y\right]  \| f \|_{H^2} \nonumber \\
  &\lesssim  \left(\la x \ra^{-2} +  e^{-C \lambda^{-1} |x|^2} \right)\| f \|_{H^2}. \label{Glambda*F}
\end{align}

\noindent \textit{Step 2. Estimate of $f_1:= (\calL_0 - z)^{-1} F_1$.} Combining  the integral expression of $f$ given in \eqref{fintegralfomu} and \eqref{Glambda*F}, it suffices to consider the integral
\begin{equation}
	\int_0^\infty e^{-(1-\Re z) \tau}  \left( \frac{1}{\la e^{-\frac{\tau}{2}}  x\ra^2} +  e^{-C (1-e^{-\tau})^{-1} e^{-\tau}|x|^2}  \right) d \tau.\label{eqintegralA2}
\end{equation} 
With the trivial bound $|\eqref{eqintegralA2}| \lesssim 1$, we now assume $|x| \ge 1$ and investigate the decay estimate for \eqref{eqintegralA2} below.

As for the first term of \eqref{eqintegralA2}, we find that for any $\epsilon' >0$ sufficiently small,
\begin{align}
    & \quad \int_0^\infty e^{-(1-\Re z) \tau}  \frac{1}{\la e^{-\frac{\tau}{2}}  x\ra^2}  d \tau
	\le  \int_0^\infty e^{-(1-\Re z) \tau} \frac{1}{\left(e^{-\tau} |x|^2 \right)^{ \min\{ 1, 1-\Re z -\frac{\epsilon'}{2}  \}} } d \tau  \notag \\
    & \lesssim \frac{1}{|x|^{\min\{ 2,2- 2\Re z - \epsilon' \}  }} \int_0^\infty e^{ - \frac{\ep'}{2}\tau} d \tau 
    \lesssim \frac{1}{|x|^{\min\{ 2,2- 2\Re z - \epsilon' \}  }}.
    \label{decayofeigenfunckey1}
\end{align}
For the second term in \eqref{eqintegralA2} we partition the integral range into $[0, 1]$ and $[1, \infty)$. 

When $\tau \in (0,1]$, we know that $\frac{e^{-\tau}}{1- e^{-\tau}} = \frac{1}{e^{\tau} -1} \sim \frac{1}{\tau}$, then
\begin{align}
	& \quad \int_0^1  e^{-(1-\Re z) \tau} e^{-\frac{C e^{-\tau}}{1-e^{-\tau}} |x|^2} d\tau 
	 \lesssim \int_0^1 e^{-\frac{C}{\tau} |x|^2} d\tau \notag\\
	 &=  \int_{|x|^2}^\infty  e^{-C s} s^{-2} |x|^2 ds \lesssim |x|^{2} e^{-\frac{C}{2}|x|} |x|^{-4} \int_{|x|^2}^\infty e^{-Cs} ds \lesssim  |x|^{-2} e^{-C'|x|}.
     \label{decayofeigenf2}
 \end{align}
 where we take a new variable $s=\frac{|x|^2}{\tau}$. 
 
 When $\tau \in (1, \infty)$, there exists $C'>0$ such that
 \begin{align}
	& \quad \int_1^\infty  e^{-(1-\Re z) \tau}  e^{-\frac{C e^{-\tau}}{1-e^{-\tau}} |x|^2} d\tau
	\lesssim \int_1^\infty e^{-(1-\Re z) \tau} e^{-C' |x|^2} d\tau \lesssim e^{-C' |x|^2}.
    \label{decayofeigenf3}
 \end{align}
 
 Consequently, the estimates above altogether deduce the pointwise estimate of $f_1$ using \eqref{fintegralfomu} by
 \be
 |f_1(x)|\lesssim \min \Big\{ \| f\|_{H^2}, \frac{1}{|x|^ {\min\{ 2, 2(1-\Re z)- \}} } \| f\|_{H^2} \Big\} \lesssim \frac{1}{\la x \ra^{ \min\{ 2, 2(1-\Re z)- \} }} \| f\|_{H^2}.
 \label{estioff1}
 \ee
 
 \noindent \textit{Step 3. Estimate of $f_2:= (\calL_0 - z)^{-1} F_2$.} We first remove the loss of derivative in $F_2$ through integration by parts
 \begin{align}
	G_{\lambda} * F_2(x)
	& = C\int \lambda^{-\frac{3}{2}} e^{-\frac{|\lambda^{-\frac{1}{2}}x-\lambda^{-\frac{1}{2}}y|^2}{4}} \na f(y) \cdot \na \Delta^{-1} Q(y) dy \notag\\
	& = \frac{C}{2} \lambda^{-\frac{1}{2}}\int \lambda^{-\frac{3}{2}} (\lambda^{-\frac{1}{2}} x - \lambda^{-\frac{1}{2}} y) e^{-\frac{|\lambda^{-\frac{1}{2}}x-\lambda^{-\frac{1}{2}}y|^2}{4}}  \cdot f(y) \na \Delta^{-1} Q(y) dy 
	\label{term1}\\
	& \quad - \frac{C}{2} \int \lambda^{-\frac{3}{2}} e^{-\frac{|\lambda^{-\frac{1}{2}}x-\lambda^{-\frac{1}{2}}y|^2}{4}}  f(y) Q(y) dy
	\label{term2}.
\end{align}
By the same argument as Step 1, 
\begin{align*}
|\eqref{term1}| &\lesssim \l^{-\frac 12} \int \l^{-\frac 32} e^{-\frac{ |\lambda^{-\frac{1}{2}}x-\lambda^{-\frac{1}{2}}y|^2}{8}}|f(y)| \la y \ra^{-1} dy \lesssim\lambda^{-\frac{1}{2}} \left(  \la x \ra^{-1} +  e^{-C\lambda^{-1} |x|^2} \right) \| f \|_{L^\infty}, \\
|\eqref{term2}| &\lesssim \left(  \la x \ra^{-2} +  e^{-C\lambda^{-1} |x|^2} \right) \| f \|_{L^\infty}, 
\end{align*}
for some constant $C > 0$. 
Here we emphasize that compared with \eqref{Glambda*F}, we lose one order of decay of $|x|$ for \eqref{term1} since $|\nabla \Delta^{-1} Q| \lesssim \la x \ra^{-1}$, and we have only used pointwise bound for $f$ so no longer require $\| f \|_{H^2}$.

Then to estimate $f_2$, it suffices to estimate the integral $I=I_1+I_2 + I_3$, where
\begin{align}
&I_1 =  \int_0^\infty e^{-(1-\Re z) \tau} (1-e^{-\tau})^{-\frac{1}{2}} \frac{1}{\la e^{-\frac{\tau}{2}}x \ra} d\tau, \label{term11} \\ 
& I_2 =  \int_0^\infty e^{-(1-\Re z) \tau} (1-e^{-\tau})^{-\frac{1}{2}}  e^{-\frac{1}{8} (1-e^{-\tau})^{-1} e^{-\tau} |x|^2}  d\tau, \label{term12} \\
& I_3=  \int_0^\infty e^{-(1-\Re z) \tau}  \left( \frac{1}{\la e^{-\frac{\tau}{2}}  x\ra^2} +  e^{-C (1-e^{-\tau})^{-1} e^{-\tau}|x|^2}  \right) d \tau.
\label{term13}
\end{align}
Since $(1-e^{-\tau})^{-\frac 12} \sim \tau^{-\frac 12}$ is integrable near $0$, we again have $|I| \lesssim 1$ and we will assume $|x| \ge 1$.
The last term $I_3$ has been handled in Step 2 and is bounded by $\frac{1}{\la x \ra^{ \min\{ 2, 2(1-\Re z)- \} }}$. As for $I_2$, we split the integral region into $(0,1)$ and $(1,\infty)$, and similar to the argument in \eqref{decayofeigenf2} and \eqref{decayofeigenf3}, $I_2$ can be bounded $|I_2| \lesssim e^{-C' |x|^2}$. As for $I_1$, similar to \eqref{decayofeigenfunckey1}, $I_1$ can be bounded by
\[
|I_1| \lesssim \int_0^\infty e^{-(1-\Re z) \tau} (1-e^{-\tau})^{-\frac{1}{2}} \frac{1}{|e^{-\frac{\tau}{2}} x |} d \tau \lesssim \frac{1}{|x|},
\]
where the last inequality follows from the fact that $1- \Re z > \frac{3}{4}$. Consequently, it yields that
\[ 
|f_2(x)| \lesssim \la x \ra^{-1} \| f \|_{L^\infty},
\]
and thus combining with the decay estimate of $f_1$ in Step $3$, $f$ can be estimated by
\be
|f(x)| \lesssim \la x \ra^{-1} \| f \|_{H^2}.
\label{decayoffstep1}
\ee

\mbox{}

\noindent \textit{Step 4.} Improve the decay of $f$. Recall the previous argument, the only obstacle to obtaining \eqref{pwofeigenfunction} is the constrained decay information of $|f(x) \na \Dein Q |$ when dealing with \eqref{term1}, where we originally can only gain $\frac{1}{\la x \ra}$ decay. However, with the decay rate of $f$ obtained in \eqref{decayoffstep1}, we are able to get one more order of the spatial decay of \eqref{term1} by
\[
|\eqref{term1}| \lesssim \lambda^{-\frac{1}{2}} \left( \la x \ra^{-2} + e^{-\frac{1}{8} \lambda^{-1} |x|^2} \right) \|\la x \ra f \|_{L^\infty}.
\]
Thus to estimate $f_2$, it suffices to estimate $I= I_1' +I_2 + I_3$, where $I_2$ and $I_3$ are given in \eqref{term12} and \eqref{term13} and $I_1'$ is defined by
\[
I_1' = \int_0^\infty e^{-(1-\Re z) \tau} (1-e^{-\tau})^{-\frac{1}{2}} \frac{1}{\la e^{-\frac{\tau}{2}}x \ra^2} d\tau,
\]
which, similar to \eqref{decayofeigenfunckey1}, can be bounded by  $|I_1'| \lesssim \frac{1}{|x|^{ \min \{2, 2-2 \Re z -\epsilon' \} }}$,
and then the decay of $f_2$ can be improved to $ |f_2(x)| \lesssim \frac{1}{|x|^{ \min \{2, 2-2 \Re z -\epsilon' \} }}$, which completes the proof in conjunction with \eqref{estioff1}.
\end{proof}

\section{Regularity of $\calH_{(l)}$ near the origin}\label{appB}
\begin{lemma}[Regularity of $\calH_{(l)}$ near the origion]
\label{lemflmrg}
	Fixed any $l\ge 0$ and $|m| \le l$, if $f(x) = f_{lm}(r) Y_{lm}(\Omega) \in H^\infty(\RR^3) \cap \calH_{(l)}$ with $l\ge 1$, then $f_{lm}$ can be decomposed into
	\be
	f_{lm}(r) = A_{lm}r^l + h_{lm},
	\label{flmdecayeq1}
	\ee
	for some constant $A_{lm}$ and function $h_{lm}$ satisfying
	\be
	|\partial_r^j h_{lm} (r)| \lesssim r^{l+2 -j} \text{ for any } r \in (0,1), \quad \text{ with }j =0,1,2.
	\label{flmdecayeq2}
	\ee
	\end{lemma}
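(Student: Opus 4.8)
## Proof Proposal for Lemma \ref{lemflmrg}

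\textbf{Overall strategy.} The statement is a regularity-at-the-origin result: a function $f = f_{lm}(r) Y_{lm}(\Omega)$ lying in $H^\infty(\RR^3)$ forces $f_{lm}$ to vanish to order $l$ at $r=0$, and after subtracting the natural leading monomial $A_{lm} r^l$ the remainder gains two extra powers of $r$. The plan is to extract this from (i) the known behaviour of harmonic polynomials / solid harmonics, and (ii) Sobolev embedding together with a Taylor-expansion argument for the smooth function $f$. Concretely, since $f \in H^\infty(\RR^3) \subset C^\infty(\RR^3)$ by Sobolev embedding, $f$ is a genuine smooth function on $\RR^3$; the point is to translate smoothness of $f$ at $x=0$ into the claimed expansion of its radial profile $f_{lm}$.

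\textbf{Key steps.} First I would recall that the solid harmonic $r^l Y_{lm}(\Omega)$ is (the restriction to $\RR^3$ of) a homogeneous harmonic polynomial of degree $l$, hence smooth, and that it spans the degree-$l$ part of $\calH_{(l)}$ that is regular at the origin. Next, expand $f$ in a Taylor polynomial at $x=0$ up to order $l+2$: $f(x) = \sum_{|\alpha| \le l+2} c_\alpha x^\alpha + R(x)$ with $|R(x)| \lesssim |x|^{l+3}$ and corresponding bounds on derivatives of $R$ up to order $2$ (available since $f \in C^\infty$, indeed $f \in H^{l+4} \hookrightarrow C^{l+2}$ suffices). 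Now project this identity onto the fixed spherical harmonic $Y_{lm}$: i.e. integrate $f(r\Omega)\overline{Y_{lm}(\Omega)}$ over $\mathbb{S}^2$. A homogeneous polynomial of degree $d$ decomposes into spherical harmonics of degrees $d, d-2, d-4, \dots$; so its $Y_{lm}$-component is a monomial $r^d$ only when $d \equiv l \pmod 2$ and $d \ge l$, and is zero otherwise. Therefore the $Y_{lm}$-projection of $\sum_{|\alpha|\le l+2} c_\alpha x^\alpha$ contributes only terms $r^l$ and $r^{l+2}$ (the degrees $l$ and $l+2$ monomials that survive), while degrees $l+1$ contribute nothing by parity. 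This yields
\[
f_{lm}(r) = A_{lm} r^l + B_{lm} r^{l+2} + \rho_{lm}(r),
\]
where $A_{lm}, B_{lm}$ are the relevant harmonic coefficients and $\rho_{lm}(r)$ is the $Y_{lm}$-projection of the Taylor remainder $R$. Setting $h_{lm}(r) := B_{lm} r^{l+2} + \rho_{lm}(r)$, the term $B_{lm} r^{l+2}$ obviously satisfies $|\partial_r^j (B_{lm} r^{l+2})| \lesssim r^{l+2-j}$ for $j=0,1,2$ and $r\in(0,1)$. For $\rho_{lm}$, I would estimate $|\partial_r^j \rho_{lm}(r)| \lesssim \sup_{|x|=r} |\partial^{(j)} R(x)| \lesssim r^{l+3-j}$ using that radial derivatives are controlled by Cartesian derivatives and that $R$ vanishes to order $l+3$; this is even better than needed, so \eqref{flmdecayeq2} follows.

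\textbf{Main obstacle.} The one genuinely delicate point is justifying that the derivative bounds on the Taylor remainder survive the projection onto $Y_{lm}$ and the conversion to radial $\partial_r$-derivatives — i.e. that $\partial_r^j \rho_{lm}(r) = \int_{\mathbb{S}^2} (\partial_r^j [R(r\Omega)]) \overline{Y_{lm}(\Omega)}\, d\Omega$ and that $|\partial_r^j[R(r\Omega)]| \lesssim |x|^{l+3-j}$ uniformly in $\Omega$; the latter requires $f \in C^{l+3}$, which is guaranteed by $H^\infty$. A secondary technical care is that $f_{lm}$ should be interpreted in the $L^2(\mathbb{S}^2)$-projection sense and then identified, via smoothness of $f$, with the pointwise spherical average $\int_{\mathbb{S}^2} f(r\Omega)\overline{Y_{lm}(\Omega)}\,d\Omega$, so that the pointwise Taylor argument applies. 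An alternative, perhaps cleaner, route avoiding Taylor expansion altogether: note $r^{-l} f$ extended appropriately is related to a smooth even function by the classical fact that a smooth function whose spherical-harmonic-$l$ component is $g_l(r) Y_{lm}$ has $g_l(r) = r^l \psi(r^2)$ with $\psi$ smooth (Whitney / Seeley-type); then $f_{lm}(r) = r^l \psi(r^2) = A_{lm} r^l + r^l (\psi(r^2) - \psi(0))$ and $\psi(r^2) - \psi(0) = O(r^2)$ with all derivative bounds, giving \eqref{flmdecayeq1}–\eqref{flmdecayeq2} directly. I would present whichever of these two is shortest to write rigorously, probably the $r^l\psi(r^2)$ version with a reference.
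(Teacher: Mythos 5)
Your proposal is correct, but it takes a genuinely different route from the paper. The paper never Taylor-expands $f$: instead it sets $F^{(j)} := \Delta^j f$ and repeatedly applies the explicit kernel representation of $\Delta_l^{-1}$ from \eqref{eqdefDeltal-1}, showing that each integration against $\frac{r_<^l}{r_>^{l+1}}$ gains one order of vanishing at the origin; after $l+1$ iterations this yields $|f_{lm}(r)|\lesssim r^l$, and one final application of the kernel formula to $f_{lm}=\Delta_l^{-1}F^{(1)}_{lm}$ produces the decomposition, with the explicit constant $A_{lm}=-\frac{1}{2l+1}\int_0^\infty F^{(1)}_{lm}(s)s^{-l+1}\,ds$ and the $r^{l+2-j}$ bounds on $h_{lm}$ read off directly from the split of the integral at $s=r$. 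Your argument instead converts $H^\infty$ into $C^{l+3}$ smoothness via Sobolev embedding, Taylor-expands at the origin, and uses the classical fact that a homogeneous polynomial of degree $d$ projects onto $Y_{lm}$ only when $d\ge l$ and $d\equiv l\pmod 2$ (equivalently, the $r^l\psi(r^2)$ structure theorem). Both arguments are complete under the $H^\infty$ hypothesis. What your route buys is brevity and a standard citable structure result; what the paper's route buys is that it stays entirely inside the $\Delta_l^{-1}$ machinery already set up in Lemma \ref{thmLlresonHl}, requires only $L^\infty$ and $L^2(r^2dr)$ control of finitely many powers of $\Delta$ applied to $f$ (no pointwise Taylor remainder estimates), and delivers the constant $A_{lm}$ in a form that is directly usable elsewhere. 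One point to be slightly careful about in your write-up: for the $j=2$ bound on the projected remainder you do need the Taylor expansion to order $l+2$ with derivative control of the remainder up to second order, i.e. roughly $C^{l+3}$ regularity as you note — this is harmless under $H^\infty$ but should be stated once rather than left to the parenthetical "$H^{l+4}\hookrightarrow C^{l+2}$ suffices".
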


\begin{proof}

   The assumption $f \in H^\infty(\RR^3) \cap \calH_{(l)}$  implies that   $F^{(j)} := \Delta^{j} f \in H^\infty(\RR^3) \cap \calH_{(l)}$ for any $j \ge 0$. For any $j \ge 1$, by \eqref{eqdefDeltal-1},
	\begin{align}
		F_{lm}^{(j-1)}(r) 
		& =  -\frac{1}{2l+1} r^{-(l+1)}\int_0^r F_{lm}^{(j)}(s) s^{l+2} ds - \frac{1}{2l+1} r^{l} \int_r^\infty F_{lm}^{(j)}(s) s^{-l+1} ds \notag \\
		& = -\frac{1}{2l+1} r^{-(l+1)}\int_0^r F_{lm}^{(j)}(s) s^{l+2} ds - \frac{1}{2l+1} r^{l} \int_1^\infty F_{lm}^{(j)}(s) s^{-l+1} ds \notag\\
		& \quad - \frac{1}{2l+1} r^{l} \int_r^1 F_{lm}^{(j)}(s) s^{-l+1} ds,
		\label{Flmeq}
	\end{align}
	where
	\begin{align}
		& \Big| r^{-(l+1)}\int_0^r F_{lm}^{(j)}(s) s^{l+2} ds \Big| \lesssim \| F_{lm}^{(j)} \|_{L^\infty}  r^{-(l+1)} \int_0^r s^{l+2} ds \lesssim r^2, 
		\label{flmterm1}\\
		& \Big|  r^{l} \int_1^\infty F_{lm}^{(j)}(s) s^{-l+1} ds \Big|
		\lesssim r^l \left(  \int_1^\infty |F_{lm}^{(j)}(s)|^2 s^2 ds \right)^\frac{1}{2} \left( \int_1^\infty s^{-2l} ds \right)^\frac{1}{2} \lesssim r^l, 
		\label{flmterm2}\\
		& \Big| r^l \int_r^1 F_{lm}^{(j)}(s) s^{-l+1} ds \Big| \lesssim
	r^l \| F_{lm}^{(j)}\|_{L^\infty} \int_r^1 s^{-l+1} ds \lesssim r^2 + r^l, \quad \text{ when } r \in (0,1).
	    \label{flmterm3}
	\end{align}
	Hence we have gained one more order of regularity of $F_{lm}^{(j-1)}$ near the origin. Precisely,
	\[
	|F_{lm}^{(j-1)}(r)| \lesssim r, \text{ for } r \in (0,1).
	\]
	
	With the estimate for $F_{lm}^{(j-1)}$ given above, we can use the same idea to obtain that regularity of $F_{lm}^{(j-k)}(r)$ near the origin for $k \in [0,j]$. Rigorously, as long as $k \le \min\{l,j \}$, for each iteration of the estimates, we can gain one more order of regularity near the origin, which follows from the similar argument to \eqref{flmterm1} and \eqref{flmterm3}. 
	
	Consequently, after iterating $k_0$ times with $k_0 \ge l +1$ with $j \ge l +1$, we can obtain the sharp regularity of $F_{lm}^{(j-k_0)}$ at the origin by
	\[
	|F_{lm}^{(j-k_0)}(r)| \lesssim r^l, \quad \text{ with } r \in (0,1).
	\] 
	In particular, we choose $j_0=l +1$ and iterate the estimate for $j_0$ times, then
	\[
	|f_{lm}(r)| = |F^{(j_0 -j_0)}(r)| \lesssim r^l, \quad \text{ with } r\in (0,1).
	\]
	More generally, we can obtain that for any $j \ge 0$, $F^{(j)} = F_{lm}^{(j)}(r) Y_{lm}(\Omega) := \Delta^jf \in H^\infty(\RR^3) \cap \calH_{(l)}$ satisfies
	\[
	|F_{lm}^{(j)}(r)| \lesssim r^l, \quad \text{ when } r \in (0,1).
	\]
	Hence using \eqref{eqdefDeltal-1} again,
	\begin{align}
		f_{lm}(r) & =-\frac{1}{2l+1} r^{-(l+1)}\int_0^r F_{lm}^{(1)}(s) s^{l+2} ds - \frac{1}{2l+1} r^{l} \int_r^\infty F_{lm}^{(1)}(s) s^{-l+1} ds \notag\\
		& = -\frac{1}{2l+1} r^{-(l+1)}\int_0^r F_{lm}^{(1)}(s) s^{l+2} ds
		- \frac{1}{2l+1} r^{l} \int_0^r F_{lm}^{(1)}(s) s^{-l+1} ds \notag \\
		& \quad - \frac{1}{2l+1} r^{l} \int_0^\infty F_{lm}^{(1)}(s) s^{-l+1} ds,
		\label{flmformula}
	\end{align}
	where
	\begin{align*}
	& \Big| r^{-(l+1)}\int_0^r F_{lm}^{(1)}(s) s^{l+2} ds \Big| 
	\lesssim r^{-(l+1)} \int_0^r s^{2l+2} ds \lesssim r^{l+2}, \\
	& \Big| r^l \int_0^r F_{lm}^{(1)}(s) s^{-l+1} ds  \Big| \lesssim r^{l} \int_0^r s ds \lesssim r^{l+2}, \\
	& \Big| \int_0^\infty F_{lm}^{(1)}(s) s^{-l+1} ds \Big| \lesssim \int_0^1 s^2 ds + \left( \int_1^\infty F_{lm}^2(s) s^2 ds  \right)^\frac{1}{2} \left(  \int_1^\infty s^{-2l} ds \right)^\frac{1}{2} \lesssim 1,
	\end{align*}
    which yields a decomposition of $f_{lm}$ as
	\begin{align*}
		f_{lm}(r) = A_{lm}r^l + h_{lm}(r), \quad \text{ with } |h_{lm}(r)| \lesssim r^{l+2} \text{ near the origin},
	\end{align*}
	with
	\[
	A_{lm} =-\frac{1}{2l+1} \int_0^\infty F_{lm}^{(1)}(s) s^{-l+1} ds.
	\]
	
	As for $\partial_r^j f_{lm}(r)$, applying \eqref{flmformula} again and using the similar argument as given above, it can be easily verified that 
	\begin{align*}
		\partial_r^j f_{lm}(r) = l(l-1)...(l-j+1)A_{lm}r^{l-j} + \partial_r^j h_{lm} (r), \text{ with } |\partial_r^j h_{lm} (r)| \lesssim r^{l+2-j} \text{ near the origin}.
	\end{align*}
\end{proof}

\begin{lemma}
\label{lemflmL2integrable}
	Fixed any $l\ge 1$ and $|m| \le l$, if the function $f(x) = f_{lm}(r) Y_{lm} (\Omega) \in H^\infty(\RR^3) \cap \calH_{(l)}$, then 
	\[
	\partial_r^j f_{lm} \in  L^2(\RR^+;r^2 dr) , \quad \text{ with } j=0,1,2,
	\]
	and
		\[
	\partial_r^j f_{lm} \in  L^2(\RR^+) , \quad \text{ with } j=0,1.
	\]
\end{lemma}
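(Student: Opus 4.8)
The plan is to extract the five claimed integrability statements from the three facts that $f\in L^2(\RR^3)$, $f\in H^1(\RR^3)$ and $f\in H^2(\RR^3)$ (all available since $f\in H^\infty(\RR^3)$), combined with the spherical-harmonic norm identities already used in the text and the near-origin expansion of Lemma~\ref{lemflmrg}. Since $f(x)=f_{lm}(r)Y_{lm}(\Omega)\in H^\infty(\RR^3)$, Sobolev embedding gives $f\in C^\infty(\RR^3\setminus\{0\})$, hence $f_{lm}\in C^\infty((0,\infty))$, so all the pointwise manipulations below are legitimate, with the behaviour at $r=0$ supplied by Lemma~\ref{lemflmrg}.

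First I would treat the weighted bounds for $j=0,1$. From $\|f\|_{L^2(\RR^3)}^2=4\pi\int_0^\infty|f_{lm}|^2r^2\,dr$ one gets $f_{lm}\in L^2(\RR^+;r^2\,dr)$, and from the identity \eqref{nablaflm}, namely $\|\nabla f\|_{L^2(\RR^3)}^2=\int_0^\infty|\partial_rf_{lm}|^2r^2\,dr+l(l+1)\int_0^\infty|f_{lm}|^2\,dr$, together with $f\in H^1(\RR^3)$, one reads off $\partial_rf_{lm}\in L^2(\RR^+;r^2\,dr)$ and — because $l(l+1)>0$ for $l\ge1$ — also $f_{lm}\in L^2(\RR^+)$. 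Next I would obtain the remaining unweighted bound $\partial_rf_{lm}\in L^2(\RR^+)$ together with the auxiliary estimate $f_{lm}/r\in L^2(\RR^+)$ that Step~3 will need: Lemma~\ref{lemflmrg} gives $f_{lm}=A_{lm}r^l+h_{lm}$ with $|\partial_r^jh_{lm}(r)|\lesssim r^{l+2-j}$ on $(0,1)$, so $|f_{lm}(r)|\lesssim r^l$ and $|\partial_rf_{lm}(r)|\lesssim r^{l-1}$ there, whence $\int_0^1\big(|\partial_rf_{lm}|^2+|f_{lm}/r|^2\big)\,dr\lesssim\int_0^1 r^{2l-2}\,dr<\infty$ precisely because $l\ge1$; on $\{r\ge1\}$ the bound $1\le r^2$ lets Step~1 control $\int_1^\infty|\partial_rf_{lm}|^2\,dr$ and $\int_1^\infty|f_{lm}/r|^2\,dr$ by $\int_1^\infty|\partial_rf_{lm}|^2r^2\,dr$ and $\int_1^\infty|f_{lm}|^2r^2\,dr$ respectively.

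Finally I would handle $j=2$. Since $\Delta$ acts invariantly on $\calH_{(l)}$ (Lemma~\ref{thmLlresonHl}) and $f\in H^2(\RR^3)$, one has $\Delta f=(\Delta_lf_{lm})Y_{lm}\in L^2(\RR^3)$, so $\Delta_lf_{lm}\in L^2(\RR^+;r^2\,dr)$; solving \eqref{Laplace on Hl} for the top-order derivative gives $\partial_r^2f_{lm}=\Delta_lf_{lm}-\tfrac2r\partial_rf_{lm}+\tfrac{l(l+1)}{r^2}f_{lm}$, whence $\int_0^\infty|\partial_r^2f_{lm}|^2r^2\,dr\lesssim\int_0^\infty|\Delta_lf_{lm}|^2r^2\,dr+\int_0^\infty|\partial_rf_{lm}|^2\,dr+l^2(l+1)^2\int_0^\infty|f_{lm}|^2r^{-2}\,dr$, and every term on the right is finite by the earlier steps, giving $\partial_r^2f_{lm}\in L^2(\RR^+;r^2\,dr)$.

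I expect this last step to be the main (mild) obstacle: unlike the $j=0,1$ bounds, $\partial_r^2f_{lm}$ is not a "slot" in any single Sobolev norm of $f$, so one is forced to route through $\Delta f\in L^2$ and then absorb the lower-order terms $\tfrac2r\partial_rf_{lm}$ and $\tfrac{l(l+1)}{r^2}f_{lm}$; it is exactly this that makes the unweighted controls on $\partial_rf_{lm}$ and on $f_{lm}/r$ necessary, and those, near the origin, genuinely rely on the expansion $f_{lm}=A_{lm}r^l+h_{lm}$ and on the hypothesis $l\ge1$ (for a generic $l=0$ mode the integral $\int_0|f_{lm}/r|^2\,dr$ would diverge, which is precisely why $l\ge1$ is assumed).
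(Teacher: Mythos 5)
Your proposal is correct and follows essentially the same route as the paper: the Pythagorean identity for $f_{lm}\in L^2(\RR^+;r^2dr)$, the gradient identity \eqref{nablaflm} for $\partial_r f_{lm}\in L^2(\RR^+;r^2dr)$ and the unweighted bounds, the near-origin expansion of Lemma \ref{lemflmrg} (crucially using $l\ge1$), and the equation $\partial_r^2 f_{lm}=\Delta_l f_{lm}-\tfrac2r\partial_r f_{lm}+\tfrac{l(l+1)}{r^2}f_{lm}$ to absorb the lower-order terms for $j=2$. The only cosmetic difference is that you extract $f_{lm}\in L^2(\RR^+)$ from the $l(l+1)$ term in \eqref{nablaflm}, whereas the paper uses the $L^\infty$ bound near the origin; both are valid.
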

\begin{proof}
	First of all, $f_{lm} \in L^2(\RR^+;r^2 dr)$ is a direct result of  $L^2$ decomposition \eqref{directdecomofL2}. In addition, using the fact that 
	\[
	\| f_{lm}(r)\|_{L^\infty(\RR^+)} \lesssim \| f \|_{L^\infty} \lesssim \| f \|_{H^2},
	\]
	we find that $f_{lm} \in L^2(\RR^+; \la r \ra^2 dr)$, and thus $f_{lm} \in L^2(\RR^+)$.
	
	 When it comes to $\partial_r f_{lm}$, 
	recalling \eqref{nablaflm}, 
\[
 \| \na f \|_{L^2(\RR^3)}^2 = \int_0^\infty |\partial_r f_{lm}|^2 r^2 dr + l(l+1) \int_0^\infty f_{lm}^2(s) s^2 ds < \infty,
\]
which yields that $\partial_r f_{lm} \in L^2(\RR^+; r^2 dr)$. Together with the smoothness of $\partial_r f_{lm}$ near the origin (see Lemma \ref{lemflmrg}), we know that $\partial_r f_{lm}  \in L^2(\RR^+; \la r \ra^2 dr)$, and thus $\partial_r f_{lm} \in L^2(\RR^+)$.

As for $\partial_r^2 f_{lm}$, since $\Delta$ acts invariantly on $\calH_{(l)}$, we find that
\[
\partial_r^2 f_{lm} + \frac{2}{r} \partial_r f_{lm} - \frac{l(l+1)}{r^2} f_{lm} = \Delta_1 f_{lm} = (\Delta f)_{lm} \in L^2(\RR^+; r^2 dr).
\]
Note that
\begin{align*}
	\Big\| \frac{\partial_r f_{lm}}{r} \Big\|_{L^2(\RR^+;r^2 dr)}^2 
	= \int_0^\infty |\partial_r f_{lm}|^2 dr = \| \partial_r f_{lm} \|_{L^2(\RR^+)}^2 <\infty,
\end{align*}
and 
\begin{align*}
	\Big\|\frac{f_{lm}}{r^2} \Big\|_{L^2(\RR^+;r^2 dr)}^2 
	& = \int_0^\infty \frac{|f_{lm}|^2}{r^2} dr = \int_0^1 \frac{|f_{lm}|^2}{r^2} dr + \int_1^\infty |f_{lm}|^2 dr \lesssim 1, \quad \text{ with } l \ge 1,
\end{align*}
where the last inequality follows from the smoothness of $f_{lm}$ near the origin (see Lemma \ref{lemflmrg}), we deduce that $\partial_r^2 f_{lm} \in L^2(\RR^+; r^2 dr)$.
\end{proof}

\normalem
\bibliographystyle{siam}
\bibliography{Bib-2}

\end{document}